\definecolor{red}{rgb}{0.75,0,0}
\definecolor{vio}{rgb}{0.75,0,0.75}
\definecolor{green}{rgb}{0,0.75,0}
\tikzset{
    labl/.style={anchor=south, rotate=270, inner sep=.5mm}
}
\tikzset{
	lablbis/.style={anchor=south, rotate=20, inner sep=.5mm}
}
\numberwithin{equation}{section}
\newtheorem{theo}[equation]{Theorem}
\newtheorem{pgst}[equation]{Principal Genus Specialization Theorem}
\newtheorem{prop}[equation]{Proposition}
\newtheorem{lem}[equation]{Lemma}
\newtheorem{cor}[equation]{Corollary}
\newtheorem{quest}[equation]{Question}
\theoremstyle{definition}
\newtheorem{defi}[equation]{Definition}
\newtheorem{ex}[equation]{Example}
\theoremstyle{remark}
\newtheorem{rem}[equation]{Remark}
\newcommand{\density}[2]{$#1 _ #2$}
\newcommand*{\abs}[1]{\left\lvert #1\right\rvert} 
\newcommand*{\Z}{\ensuremath\mathbb{Z}} 
\newcommand*{\Q}{\ensuremath\mathbb{Q}} 
\newcommand*{\R}{\ensuremath\mathbb{R}} 
\newcommand*{\A}{\ensuremath\mathbb{A}} 
\newcommand*{\K}{\ensuremath\mathbb{K}} 
\newcommand*{\PP}{\ensuremath\mathbb{P}} 
\newcommand*{\set}[1]{\left\{#1\right\}} 
\newcommand*{\st}{\enskip \middle |\enskip} 
\newcommand*{\rint}[1]{\mathcal{O}_{#1}} 
\newcommand*{\rints}[2]{\mathcal{O}_{#1,#2}} 
\newcommand*{\slz}[1]{\SL_2(#1)} 
\newcommand*{\gltw}{\gl_2^{tw}} 
\newcommand*{\cltw}{\cl^{tw}}
\newcommand*{\clztw}{\cl_{\Z}^{tw}}
\newcommand*{\clzsl}{\cl_{\Z}^{\SL_2}}
\newcommand*{\cltwdp}{\cl^{tw}_R(\Delta)}
\newcommand*{\clkx}{\cl^{tw}_{\K[X]}(4f)}
\newcommand*{\cloksx}{\cl^{tw}_{\oks[X]}(4f)}
\newcommand*{\cloks}{\cl^{tw}_{\oks}(4f(n))}
\newcommand*{\jk}{J(\K)}
\newcommand*{\ok}{\rint{\K}}
\newcommand*{\oks}{\rints{\K}{\mathcal{S}}}
\newcommand*{\okx}{\rint{\K}[X]}
\newcommand*{\oksx}{\oks[X]}
\newcommand*{\ws}{\mathcal{W}_{\mathcal{S}}}
\DeclareMathOperator{\SL}{SL}
\DeclareMathOperator{\gl}{GL}
\DeclareMathOperator{\spec}{Spec}
\DeclareMathOperator{\disc}{disc}
\DeclareMathOperator{\cl}{Cl}
\DeclareMathOperator{\pic}{Pic}
\DeclareMathOperator{\dv}{div}
\DeclareMathOperator{\lc}{lc}
\DeclareMathOperator{\val}{val}
\mathchardef\mhyphen="2D
\title{Quadratic forms and Genus Theory: a link with $2$-descent and an application to non-trivial specializations of ideal classes}
\author{William Dallaporta}
\date{April 2024}
\begin{document}

\maketitle

\begin{sloppypar}

\begin{small}\textit{Keywords:} binary quadratic form, Picard group, Genus Theory, $2$-descent on hyperelliptic curves, density on $\mathcal{S}$-integers \end{small}

\begin{small}\textit{MSC classes:} 11E16, 14H25, 14H40 (Primary); 11R45 (Secondary) \end{small}

\paragraph{Abstract} Genus Theory is a classical feature of integral binary quadratic forms. Using the author's generalization of the well-known correspondence between quadratic form classes and ideal classes of quadratic algebras, we extend it to the case when quadratic forms are twisted and have coefficients in any PID $R$. When ${R = \mathbb{K}[X]}$, we show that the Genus Theory map is the quadratic form version of the $2$-descent map on a certain hyperelliptic curve. As an application, we make a contribution to a question of Agboola and Pappas regarding a specialization problem of divisor classes on hyperelliptic curves. Under suitable assumptions, we prove that the set of non-trivial specializations has density $1$.

\section{Introduction}

It has been well-known since the work of Gauss in his \emph{Disquisitiones Arithmeticae} that, given ${\Delta \in \Z}$, the set
$$\set{\begin{array}{c} \text{equivalence~classes~of~primitive~binary~quadratic~forms} \\ ax^2+bxy+cy^2 \text{~with~} a,b,c \in \Z \text{~and~discriminant~} b^2-4ac = \Delta \end{array}}$$
can be endowed with a group structure, whose group operation is called the composition law (see Section~\ref{secdef} for the definitions).

Before going further, we must take care which notion of equivalence class we use. Over $\Z$, the natural action of $\SL_2(\Z)$ on quadratic forms is usually considered. In that setting, and when $\Delta$ is a negative integer, it is a classical fact that the above group (restricted to classes of positive definite quadratic forms) is isomorphic to the Picard group of the quadratic $\Z$-algebra of discriminant $\Delta$ (see \cite[Theorem~7.7]{Cox2} for a modern exposition). There have been numerous generalizations of this group structure and of this correspondence to other rings than $\Z$, possibly with a different action (see for example \cite{Towber} with $\SL_2$ or \cite{Kneser} with $\gl_2$). More recently, Wood gave a set-theoretical bijection over an arbitrary base scheme \cite{Wood}, and the present author derived from her work the sought group isomorphism, when $2$ is not a zero divisor on the base scheme \cite{moi}. In her work, Wood pointed out the importance of the \emph{twisted action} of $\gl_2$, which we denote by $\gltw$ (see Definition~\ref{action}). This is the only action we consider through this article, except in Subsection~\ref{subsecrz}, where we make the link with the classical $\SL_2$ action over $\Z$.

This general group isomorphism from \cite{moi} is stated over any base scheme $S$. Here, we shall consider affine schemes ${S = \spec(R)}$, where $R$ is an integral domain of characteristic different from $2$ such that every locally free $R$-module of finite rank is free. Under an additional assumption (see Proposition~\ref{propbij}), the set of (twisted-)equivalence classes of primitive binary quadratic forms with coefficients in $R$ and with discriminant ${\Delta \in R}$ is a group, which we denote by $\cltwdp$ (Proposition~\ref{propbij}). The neutral element of $\cltwdp$ is called the \emph{principal form class}.

Given a primitive binary quadratic form, it is natural to wonder if it lies in the principal form class or not. A classical feature of quadratic forms over $\Z$ is \emph{Genus Theory}, which partially answers this question and which is the main topic of this paper. Roughly speaking, the operation associating a class of quadratic forms to its set of values modulo its discriminant $\Delta$ yields a group homomorphism
$$\psi \colon \cl_{\Z}^{tw}(\Delta) \longrightarrow \faktor{\left(\faktor{\Z}{\Delta \Z}\right)^{\times}}{H_0}$$
whose kernel is called the \emph{principal genus} (Theorem~\ref{groupmorphism}). Here, $H_0$ denotes the set of values of the principal form class. In particular, a quadratic form whose class is not in the principal genus cannot be equivalent to the principal form.

In this article, we extend Genus Theory (for the twisted action $\gltw$) to quadratic forms over principal ideal domains. In this general setting, it is already a difficult problem to determine precisely the principal genus. A simple argument shows that it always contains the subgroup of squares. Over $\Z$, when the discriminant is negative, we show in Proposition~\ref{propcasez} that the converse is true (this is just an adaptation in our context of the proofs of the classical results).

We then study the case when the base ring is $\K[X]$ (where $\K$ is a field of characteristic $0$), and when the discriminant is of the form ${\Delta = 4f}$ with ${f \in \K[X]}$ a square-free monic polynomial of odd degree at least $3$. In this situation, the group of (twisted)-equivalence classes of quadratic forms of discriminant $4f$ is isomorphic to the group of $\K$-points of the Jacobian variety of the hyperelliptic curve $\mathcal{C}$ defined over $\K$ by the equation ${Y^2 = f(X)}$. This correspondence is closely related to Mumford's description of the Jacobian, and was already used by Gillibert in that setting \cite{Jean}. We prove in Subsection~\ref{subsec2descente} that Genus Theory over $\K[X]$ turns out to be the quadratic form version of the $2$-descent map on the Jacobian of $\mathcal{C}$. More precisely, by combining Proposition~\ref{explx-t} and Theorem~\ref{inj}, we obtain

\begin{theo} \label{theogoal2desc}
	Let $\K$ be a field of characteristic $0$, let ${f \in \K[X]}$ be a square-free monic polynomial of odd degree at least $3$, let ${L := \faktor{\K[X]}{\left\langle f(X) \right\rangle}}$, and let $J$ be the Jacobian variety of the hyperelliptic curve defined by the affine equation ${Y^2=f(X)}$ over $\K$. Let us denote by $\Psi$ the Genus Theory homomorphism \eqref{mappsi} and by $\lambda$ the $2$-descent map on $J(\K)$. Then the following diagram commutes
	\begin{equation} \label{diag2descintro}
	\begin{tikzcd}
	\faktor{\clkx}{\clkx^{\square}} \arrow[r,"\sim "] \arrow[d,"\Psi "'] & \faktor{\jk}{2\jk} \arrow[d,"\lambda"] \\
	\faktor{L^{\times}}{\K^{\times}L^{\times \square}} & \faktor{L^{\times}}{L^{\times \square}} \arrow[l,"pr "]
	\end{tikzcd}
	\end{equation}
	where the exponent $\square$ denotes the subgroup of squares, and $pr$ is the natural projection. Furthermore, $\Psi$ is injective; in other words, the principal genus is precisely the subgroup of squares.
\end{theo}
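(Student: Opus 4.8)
The plan is to prove Theorem~\ref{theogoal2desc} by establishing the commutativity of the diagram and the injectivity of $\Psi$, where the injectivity will follow essentially \emph{for free} once the diagram commutes, using the known injectivity of the $2$-descent map $\lambda$. Since the theorem is assembled from Proposition~\ref{explx-t} and Theorem~\ref{inj}, I would first recall precisely what each of those provides: the top horizontal isomorphism $\faktor{\clkx}{\clkx^{\square}} \xrightarrow{\sim} \faktor{\jk}{2\jk}$ should come from the identification of the form-class group with $J(\K)$ (via Mumford's description, as invoked in the discussion preceding the theorem), and Proposition~\ref{explx-t} should give an explicit formula for the descent map $\lambda$ and/or the Genus Theory map $\Psi$ in terms of evaluating quadratic forms or the relevant $x$-coordinates modulo $f$, landing in $\faktor{L^{\times}}{L^{\times \square}}$.

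\medskip

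The core of the argument is to chase an arbitrary form class around both paths of the square and check they agree in $\faktor{L^{\times}}{\K^{\times}L^{\times \square}}$. Starting from a class $[Q] \in \faktor{\clkx}{\clkx^{\square}}$, along the left-and-bottom path I would compute $\Psi([Q])$ directly from the definition of the Genus Theory homomorphism~\eqref{mappsi}, i.e.\ as the class of the values of $Q$ reduced modulo $f$; along the top-and-right path I would transport $[Q]$ to a point $P \in \faktor{\jk}{2\jk}$, apply the classical $2$-descent formula for $\lambda(P)$, and then project via $pr$. The heart of the matter is that the $2$-descent map on a hyperelliptic Jacobian is given, on a reduced divisor represented by a Mumford pair $(a(X), b(X))$, by sending it to the class of $a(X) \bmod f(X)$ in $\faktor{L^{\times}}{L^{\times\square}}$ (up to the standard normalization); meanwhile the quadratic form attached to such a divisor has leading-coefficient/value data that reduces to the same polynomial $a(X)$ modulo $f$. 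So the commutativity reduces to matching these two explicit descriptions, with the projection $pr$ precisely accounting for the quotient by $\K^{\times}$ that distinguishes the two target groups (the descent map remembers the constant up to squares, while Genus Theory discards it).

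\medskip

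For the injectivity of $\Psi$, I would argue as follows. The map $\lambda$ on the right is injective because $2$-descent is injective when $J(\K)[2]$ is fully captured by the torsion of $L$, which holds here since $f$ is square-free of odd degree and $\K$ has characteristic $0$ (this is the standard injectivity of the Kummer/descent map $\faktor{J(\K)}{2J(\K)} \hookrightarrow \faktor{L^{\times}}{L^{\times\square}}$ for odd-degree hyperelliptic curves, where there is a single rational point at infinity). Granting the commutativity of the diagram and that the top arrow is an isomorphism, a class $[Q]$ in $\ker \Psi$ maps under the top isomorphism to some $P$ with $pr(\lambda(P)) = 0$ in $\faktor{L^{\times}}{\K^{\times}L^{\times\square}}$; I then need to lift this to the statement that $\lambda(P) = 0$ in $\faktor{L^{\times}}{L^{\times\square}}$, i.e.\ that the only constant ambiguity is itself a square, and conclude $P = 0$ by injectivity of $\lambda$, hence $[Q] = 0$. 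The main obstacle is precisely this last lifting step: I expect to need the norm map $N_{L/\K}$ to control the difference between $\faktor{L^{\times}}{L^{\times\square}}$ and $\faktor{L^{\times}}{\K^{\times}L^{\times\square}}$, showing that an element of $L^{\times}$ lying in $\K^{\times}L^{\times\square}$ and in the image of $\lambda$ must already be a square in $L^{\times}$ --- this is where the odd-degree hypothesis is essential, since it forces $\deg f$ to be odd so that the norm of a constant is an odd power and the square-class bookkeeping closes up. Everything else in the proof should be a direct unwinding of the definitions already fixed in Proposition~\ref{explx-t} and Theorem~\ref{inj}.
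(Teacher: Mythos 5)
Your proposal is correct and follows essentially the same route as the paper: commutativity is checked by computing both paths on a representative $[a,2b,c]$ with $a$ coprime to $f$ (the descent map sends the corresponding Mumford divisor to $a$ up to the constant $(-1)^{\deg a}/\lc(a)$, which dies under $pr$), and injectivity of $\Psi$ is reduced to injectivity of $pr\circ\lambda$, where the key lifting step is handled exactly as you predict --- via Schaefer's result that the image of $\lambda$ lies in the kernel of the norm $N_{L/\K}$, together with $N(\varepsilon)=\varepsilon^{2g+1}\equiv\varepsilon$ modulo squares for constants, so the odd degree forces the constant ambiguity to be a square.
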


The fact that our base ring is a principal ideal domain is heavily used to find an adequate representative of a given class of quadratic forms (Lemma~\ref{coprime}). This is a property which is at the heart of most of the technical arguments. If one wants to extend Genus Theory to quadratic forms over more general rings than PIDs, then one must in particular extend Lemma~\ref{coprime} or find a way to deal without it.

An as application of Genus Theory, the last Section of our article is devoted to the following question, which is closely related to a question raised by Agboola and Pappas \cite{AgboolaPappas}.

\begin{quest} \label{quest}
	Let $\K$ be a number field. Let $\mathcal{C}$ be a hyperelliptic curve over $\K$ of genus ${g \geq 1}$, with a $\K$-rational Weierstrass point. Let us choose an affine equation of $\mathcal{C}$ of the form ${Y^2=f(X)}$ where ${f \in \ok[X]}$ is a square-free monic polynomial of odd degree ${2g+1}$. Let ${I \in \pic\left(\faktor{\ok[X,Y]}{\left\langle Y^2-f(X) \right\rangle}\right)}$ be a non-trivial ideal class. Can we find ${n \in \ok}$ such that the specialization of $I$ at ${X = n}$ gives a non-trivial ideal class $I_n$ in $\pic\left(\rint{\K(\sqrt{f(n)})}\right)$, or at least in $\pic\left(\faktor{\ok[Y]}{\left\langle Y^2-f(n) \right\rangle}\right)$\,?
\end{quest}

We answer positively the second part of Question~\ref{quest} for ideal classes $I$ which are not squares, at least after inverting a finite number of prime ideals of $\ok$. We further prove that the density of non-trivial specializations is $1$, for any ``reasonable'' density. In the case of square ideal classes, our arguments which rely on Genus Theory cannot be extended, since squares are already in the principal genus.

Regarding hyperelliptic curves, several results have already been established about non-trivial specializations:
\begin{itemize}
	\item when $\mathcal{C}$ is an elliptic curve over ${\K = \Q}$ and $I$ has infinite order, Soleng proved that there exist infinitely many non-trivial specializations $I_n$ in imaginary quadratic extensions of $\Q$ whose order is unbounded as $n$ goes to infinity \cite[Theorem~4.1]{Soleng};
	
	\item when ${\K = \Q}$ and $I$ has finite order, Gillibert and Levin used Kummer Theory and Hilbert's Irreducibility Theorem to show that, after inverting primes of bad reduction, there exist infinitely many non-trivial specializations $I_n$ in imaginary quadratic extensions of $\Q$ \cite[Corollary~3.8]{GillibertLevin};
	
	\item when ${\K = \Q}$ and $I$ has infinite order, Gillibert showed that there exist infinitely many negative integers $n$ such that $I_n$ is a non-trivial ideal class of the order $\Z[\sqrt{f(n)]}$. With the additional assumption that the irreducible factors of $f$ all have degree at most $3$, this leads to infinitely many non-trivial ideal classes in $\pic\left(\rint{\Q(\sqrt{f(n)})}\right)$ \cite[Theorems 1.2 and 1.3]{Jean}. Among Gillibert's main ingredients, one can find Wood's correspondence with binary quadratic forms and a generalization of Soleng's argument.
\end{itemize}

Let us assume that ${\K = \Q}$ for the time being. Given a non-trivial ideal class $I$ of $\faktor{\Z[X,Y]}{\left\langle Y^2-f(X) \right\rangle}$, can we find non-trivial specializations of $I$ in real quadratic extensions of $\Q$\,? Soleng's argument relies on properties which are specific to negative discriminants, and do not generalize to the positive case. This leads us to consider a different approach.

Numerical experiments show that, depending on the ideal class $I$ we start from, there may exist congruence classes of ${n \in \Z}$ leading to non-trivial specializations (see Example~\ref{sieve}), whatever the sign of the discriminant.

As in the work of Gillibert, we use Wood's bijection between invertible ideal classes of quadratic algebras and equivalence classes of primitive binary quadratic forms as described in \cite[Corollary~3.25]{moi}. In this setting, the ideal class $I$ we start from corresponds to the equivalence class of a primitive quadratic form ${q(x,y) = ax^2+bxy+cy^2}$ with discriminant ${b^2-4ac = 4f}$, where ${a,b,c \in \ok[X]}$. Question~\ref{quest} now asks whether one can find ${n \in \ok}$ such that the specialized quadratic form ${a(n)x^2 + b(n)xy + c(n)y^2}$ is not equivalent to the principal form ${x^2-f(n)y^2}$, that is, the class of quadratic forms corresponding to the trivial ideal class in Wood's bijection.

The presentation of Genus Theory in this article requires us to work over a principal ideal domain. As $\ok$ may not be a PID, we slightly modify it by inverting finitely many prime ideals. Thus, we will work over $\oks$ instead of $\ok$, where $\oks$ is the \emph{ring of $\mathcal{S}$-integers of $\K$}. Despite the fact that $\oks[X]$ is not a PID, by making a suitable choice of $\mathcal{S}$, one can relate classes of quadratic forms over $\oks[X]$ with classes of quadratic forms over $\K[X]$ (Proposition~\ref{propoksisok}). Notice that in the terminology of divisors, this operation is the restriction to the generic fibre.

We then prove that, given a class $q$ of quadratic forms over $\oks[X]$ which is not in the principal genus when viewed over $\K[X]$, there exist infinitely many ${n \in \oks}$ such that the specialized class $q_n$ of quadratic forms is not in the principal genus. We achieve this in Theorem~\ref{theocritmod}. Together with Theorem~\ref{theodens1} and Remark~\ref{choice}, a complete version of the main result is the following.

\begin{theo} \label{theogoaldens}
	Let $\K$ be a number field. Let ${f \in \ok[X]}$ be a square-free monic polynomial of odd degree at least $3$. Let $\mathcal{S}$ be a finite set of nonzero prime ideals of $\ok$ such that $\oks$ is a PID. Let ${I \in \pic\left(\faktor{\oks[X,Y]}{\left\langle Y^2-f(X) \right\rangle}\right)}$ be a non-trivial ideal class.
	
	Assume that the ideal class generated by $I$ in $\pic\left(\faktor{\K[X,Y]}{\left\langle Y^2-f(X) \right\rangle}\right)$ is not a square. Then the set of ${n \in \oks}$ such that the specialization of $I$ at ${X=n}$ gives a non-trivial ideal class $I_n$ of $\faktor{\oks[Y]}{\left\langle Y^2-f(n) \right\rangle}$ has density $1$, for any density on $\oks$ as in Definition~\ref{defidensity}. In particular, there are infinitely many ${n \in \oks}$ such that $I_n$ is non-trivial.
\end{theo}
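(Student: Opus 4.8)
The plan is to reduce the arithmetic statement about the specializations $I_n$ to a statement about the Genus Theory homomorphism, and then to invoke the two main technical results of this section. First I would apply Wood's bijection \cite{moi} to replace the ideal class $I$ by the equivalence class $q$ of a primitive binary quadratic form $q(x,y) = ax^2 + bxy + cy^2$ with $a,b,c \in \oksx$ and discriminant $b^2 - 4ac = 4f$, so that $q$ represents a class in $\cloksx$. Under this dictionary the specialization of $I$ at $X=n$ corresponds to the specialized class $q_n$ of the form $a(n)x^2 + b(n)xy + c(n)y^2$ in $\cloks$, and non-triviality of $I_n$ in $\pic(\oks[Y]/\langle Y^2 - f(n)\rangle)$ is equivalent to $q_n$ not being equivalent to the principal form $x^2 - f(n)y^2$.

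The next step is to translate the hypothesis on $I$ into the language of Genus Theory. By Proposition~\ref{propoksisok} the class $q$ over $\oksx$ corresponds to a well-defined class in $\clkx$, and the assumption that $I$ generates a non-square in $\pic(\K[X,Y]/\langle Y^2-f(X)\rangle)$ says, through Wood's group isomorphism, exactly that this class is not a square in $\clkx$. By Theorem~\ref{theogoal2desc} the Genus Theory homomorphism $\Psi$ is injective, so the principal genus over $\K[X]$ is precisely the subgroup of squares; hence $q$, viewed over $\K[X]$, is not in the principal genus.

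With this in hand I would apply Theorem~\ref{theocritmod}: since $q$ is not in the principal genus over $\K[X]$, that theorem produces a congruence-type criterion on $n$ --- expressed through the genus characters of the specialized form --- guaranteeing that $q_n$ is not in the principal genus of forms of discriminant $4f(n)$ over $\oks$. A class that is not in the principal genus is in particular not the principal class, since the principal form always lies in the principal genus, being the neutral element of $\cloks$; so for every such $n$ the specialization $I_n$ is non-trivial. This already yields the final ``in particular'' assertion about infinitely many $n$.

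Finally, to obtain the full density-$1$ statement I would feed the criterion of Theorem~\ref{theocritmod} into Theorem~\ref{theodens1}, which shows that the set of $n$ satisfying it has density $1$ for any density as in Definition~\ref{defidensity}, the auxiliary choice of $\mathcal{S}$ recorded in Remark~\ref{choice} ensuring that the finitely many inverted primes do not interfere. I expect the density step to be the main obstacle: it is easy to see that the genus condition cuts out a set governed by the splitting behaviour of primes, but upgrading ``positive density'' to ``density $1$'' requires showing that the relevant genus character of $q_n$ is non-trivial for all but a density-$0$ set of $n$, which is the real content and presumably rests on a Chebotarev or equidistribution argument controlling how the characters vary with the specialized discriminant $f(n)$.
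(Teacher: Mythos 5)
Your proposal is correct and follows essentially the same route as the paper: Wood's bijection to pass from $I$ to a form class $q \in \cloksx$, injectivity of $\Psi$ (Theorem~\ref{theogoal2desc}) to turn the non-square hypothesis over $\K[X]$ into ``$q$ is not in the principal genus over $\K[X]$'', the PGS Theorem~\ref{theocritmod} for the modular criterion, and the density machinery of Subsection~\ref{secdens} to conclude. The only slight imprecision is in your last step: Theorem~\ref{theodens1} does not show that the set of $n$ satisfying the criterion has density $1$, but rather that the set $\mathcal{T}_{triv}$ of trivial specializations has density $0$ (via Chebotarev giving $\mathcal{P}_{\varepsilon}$ Dirichlet density $\frac{1}{2}$, hence divergence of $\sum_{\mathfrak{p} \in \mathcal{P}_{\varepsilon}} 1/N(\mathfrak{p})$ and vanishing of the product bound of Theorem~\ref{theodeltaleq} --- exactly the Chebotarev-type argument you anticipate), after which the density-$1$ conclusion for the complement follows from properties \ref{d1oks} and \ref{d3union} of Definition~\ref{defidensity}.
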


If we choose $\mathcal{S}$ which contains the prime ideals dividing $2\disc(f)$ and such that $\oks$ is a PID, then Theorem~\ref{theogoaldens} gives a partial answer to a question raised by Agboola and Pappas \cite{AgboolaPappas}. More precisely, following \cite[\S 2.1]{Jean}, there exists a smooth projective model ${\overline{\mathcal{W}} \longrightarrow \spec(\oks)}$ of $\mathcal{C}$ such that, set-theoretically,
$$\overline{\mathcal{W}} = \spec\left(\faktor{\oks[X,Y]}{\left\langle Y^2-f(X) \right\rangle}\right) \cup \overline{\set{\infty}}$$
together with an isomorphism ${\pic\left(\faktor{\oks[X,Y]}{\left\langle Y^2-f(X) \right\rangle}\right) \simeq \pic^0(\overline{\mathcal{W}})}$, where $\overline{\set{\infty}}$ is the scheme-theoretic closure of the point at infinity of $\mathcal{C}$. The result of Theorem~\ref{theogoaldens} implies that a degree $0$ line bundle on $\overline{\mathcal{W}}$ which is not a square has infinitely many non-trivial specializations over suitable quadratic $\oks$-orders.

\paragraph{Notations.}

All through this paper, the rings we consider are commutative and endowed with a multiplicative identity denoted by $1$. If $R$ is a ring, $R^{\times}$ denotes its group of units. Given ${r_1,\ldots,r_m \in R}$, the ideal generated by ${r_1,\ldots,r_m}$ is denoted by ${\left\langle r_1,\ldots,r_m \right\rangle}$. If $G$ is a group, then $G^{\square}$ denotes its subgroup of squares (thinking of the group law multiplicatively).

Given two integers ${a<b}$, we denote by ${\llbracket a,b \rrbracket}$ the set of integers $n$ such that ${a \leq n \leq b}$.

If $T$ is a scheme, we denote by $\pic(T)$ the Picard group of $T$. When $T$ is Noetherian and reduced, we shall identify $\pic(T)$ with the group of Cartier divisors modulo linear equivalence. When $R$ is a domain, we write by abuse of notations $\pic(R)$ instead of $\pic(\spec(R))$, which we also identify with the group of invertible fractional ideals modulo principal ones.

We refer the reader to Definition~\ref{action} and Proposition~\ref{propbij} for the meaning of $\gltw$ and of $\cltwdp$ respectively.

\paragraph{Acknowledgements.}

This work is part of my PhD at the Institut de Mathématiques de Toulouse. I am grateful to Sander Mack-Crane, Ignazio Longhi, Florent Jouve and Yuri Bilu for helpful discussions about densities over rings of $\mathcal{S}$-integers. I also address special thanks to Jean Gillibert and Marc Perret who made this work possible, who regularly gave precious advice, and who were particularly encouraging all along this work. The final writing of this paper owes a lot to the meticulous proofreading of Christian Wuthrich and of the anonymous referee, whom I warmly thank.

\section{Binary quadratic forms and Picard groups of quadratic algebras} \label{secdef}

The kind of rings $R$ we consider in this paper are mostly of the form $R'$ or $R'[X]$ with $R'$ a principal ideal domain of characteristic different from $2$. An important property they share is the fact that every locally free $R$-module of finite rank must be free, according to \cite{Seshadri}.

\begin{defi}
	Let $R$ be a ring such that every locally free $R$-module of finite rank is free. A (binary) \emph{quadratic form} $q$ over $R$ is a homogeneous degree $2$ polynomial in $R[x,y]$. It is of the form ${ax^2+bxy+cy^2}$ for some ${a,b,c \in R}$, and is denoted by $[a,b,c]$. It is called \emph{primitive} if the ideal generated by $a,b,c$ in $R$ is the unit ideal. Its \emph{discriminant} is the quantity ${\Delta := b^2-4ac \in R}$.
\end{defi}

\begin{defi} \label{action}
	Let ${M = \begin{pmatrix} \alpha & \beta \\ \gamma & \delta \end{pmatrix} \in \gl_2(R)}$. Following \cite{Wood}, we define the \emph{twisted action of $\gl_2(R)$} over the set of quadratic forms via ${M \cdot q := \frac{1}{\det(M)}(q \circ M)}$, that is
	$$\left(\begin{pmatrix} \alpha & \beta \\ \gamma & \delta \end{pmatrix} \cdot q \right) (x,y) := \frac{1}{\alpha \delta - \beta \gamma} q(\alpha x + \beta y, \gamma x + \delta y) \hspace{1cm} \forall ~ x,y \in R.$$
	We denote this action by $\gltw$. Two quadratic forms $q$ and $q'$ are \emph{$\gltw$-equivalent} (\emph{equivalent} for short) if there exists $M \in \gl_2(R)$ such that ${q' = M \cdot q}$. In the following, a \emph{class of quadratic forms} $[a,b,c]$ refers to the equivalence class of the quadratic form $[a,b,c]$.
\end{defi}

\begin{rem} \label{remcompute}
	With the same notations, if ${q = [a,b,c]}$, then
	\begin{align*}
	\begin{pmatrix} \alpha & \beta \\ \gamma & \delta \end{pmatrix} \cdot [a,b,c] & = \frac{1}{\alpha\delta-\beta\gamma} [a \alpha^2 + b\alpha \gamma + c \gamma^2, b(\alpha \delta + \beta \gamma) + 2(a \alpha \beta + c \gamma \delta), a \beta^2 + b \beta \delta + c \delta^2] \\
	& = \frac{1}{\alpha\delta-\beta\gamma}[q(\alpha,\gamma),q(\alpha+\beta,\gamma+\delta)-q(\alpha,\gamma)-q(\beta,\delta),q(\beta,\delta)].
	\end{align*}
\end{rem}

We recall the main link between quadratic forms and ideals in our setting (\cite[Corollary~3.25]{moi}).

\begin{prop} \label{propbij}
	Let $R$ be an integral domain of characteristic different from $2$ such that every locally free $R$-module of finite rank is free. Let ${\Delta \in R}$ be such that the equation ${\Delta \equiv x^2 \pmod{4R}}$ has a unique solution $x$ modulo $2R$, and let $\pi$ be any lift of $x$ to $R$. Denote by $\cltw_R(\Delta)$ the set of $\gltw$-equivalence classes of primitive quadratic forms over $R$ with discriminant $\Delta$. Then we have a bijection
	\begin{equation} \label{bij}
	\begin{aligned}
	\cltw_R(\Delta) & \overset{1 \colon 1}{\longleftrightarrow} \pic\left(\faktor{R[\omega]}{\left\langle \omega^2 + \pi \omega - \frac{\Delta-\pi^2}{4} \right\rangle}\right) \\
	\bigg[[a,b,c] \text{~with~} a\neq 0\bigg] & \longmapsto \bigg[\langle \omega + \frac{\pi-b}{2}, a \rangle\bigg] \end{aligned}.
	\end{equation}
	This allows us to endow $\cltw_R(\Delta)$ with a group structure, whose operation $\ast$ is called the \emph{composition law}.
\end{prop}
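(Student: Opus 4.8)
The plan is to establish the bijection \eqref{bij} as a generalization of the classical Dedekind--Gauss dictionary between forms and ideals, constructing both directions explicitly and checking they are mutually inverse. Write $A := \faktor{R[\omega]}{\langle \omega^2 + \pi\omega - \frac{\Delta-\pi^2}{4}\rangle}$ for the target algebra. First I would record its basic structure: the hypothesis $\Delta \equiv \pi^2 \pmod{4R}$ makes $\frac{\Delta-\pi^2}{4}$ an element of $R$, so $A$ is a free $R$-module of rank $2$ with basis $\{1,\omega\}$, and its norm form $N(u+v\omega) = u^2 - \pi uv + \frac{\pi^2-\Delta}{4}v^2$ has discriminant $\Delta$. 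Completing the square gives $(\omega+\frac{\pi}{2})^2 = \frac{\Delta}{4}$, so morally $\omega = \frac{-\pi+\sqrt{\Delta}}{2}$ and $\omega + \frac{\pi-b}{2} = \frac{-b+\sqrt{\Delta}}{2}$, which is exactly the classical Gauss generator. I would also note that the uniqueness of $x$ modulo $2R$ guarantees $A$ is independent, up to canonical isomorphism, of the chosen lift $\pi$ (a different lift $\pi+2r$ is absorbed by $\omega \mapsto \omega - r$), so the target group is well defined. Crucially, since every finite locally free $R$-module is free, every invertible fractional $A$-ideal is free of rank $2$ over $R$, which is what makes global bases available throughout.

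For the forward map, given a primitive form $q=[a,b,c]$ with $a\neq 0$, I would set $I_q := \langle a,\ \omega+\frac{\pi-b}{2}\rangle$ and check: (i) $I_q$ is an $A$-submodule, i.e.\ multiplication by $\omega$ sends the two generators back into $I_q$, a direct computation that uses precisely the relation $b^2-4ac=\Delta$; (ii) $\{a,\ \omega+\frac{\pi-b}{2}\}$ is an $R$-basis, so $I_q$ is free of rank $2$; and (iii) $I_q$ is invertible, with primitivity of $q$ being exactly the condition that makes $I_q$ a \emph{proper} (invertible) ideal rather than one lying in a proper suborder. The heart of the argument is invariance under $\gltw$: I would show that if $q' = M\cdot q$ then $I_{q'}$ and $I_q$ differ by multiplication by an element of $\Frac(A)^\times$, hence define the same class in $\pic(A)$. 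This is exactly where the twist $\frac{1}{\det M}$ earns its place: it makes the scalar matrices act trivially, which matches the fact that the ideal $I_q$ determines $q$ only up to rescaling its generator, so that $\gltw$-equivalence corresponds to equality of ideal classes. The formulas in Remark~\ref{remcompute} are the computational engine here.

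For the inverse map I would use the norm form. Given an invertible ideal $I$, choose an $R$-basis $\{e_1,e_2\}$ (available since $I$ is free of rank $2$); then $N$ restricted to $I$ takes values in the $R$-ideal generated by the content $N(I)$, and after dividing by a generator of that content one obtains a primitive binary form $q_I(x,y) := N(x e_1 + y e_2)/N(I)$ of discriminant $\Delta$. A change of basis is an element of $\gl_2(R)$ under which the norm form transforms by $q\mapsto q\circ M$; together with the unit ambiguity in the chosen generator of $N(I)$, these ambiguities amount to a single $\gltw$-equivalence, so the class of $q_I$ depends only on the class of $I$ in $\pic(A)$. Finally I would check the two assignments are mutually inverse by the bookkeeping identity $N\!\left(x a + y(\omega+\frac{\pi-b}{2})\right) = a\,q(x,y)$, which returns $q$ from $I_q$ and conversely. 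Transporting the group law from $\pic(A)$ then defines the composition $\ast$ on $\cltwdp$.

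The main obstacle is the inverse direction in this generality: over an arbitrary such $R$ (not a PID) one must know that \emph{every} invertible class has a representative that is free of rank $2$ over $R$ and whose norm content is principal, so that the division defining $q_I$ makes sense and yields a well-defined $\gltw$-class independent of all choices. This is precisely what the ``locally free is free'' hypothesis secures, and conceptually the cleanest route is to deduce the whole statement from Wood's parametrization \cite{Wood} of binary quadratic forms by pairs (quadratic algebra, oriented rank-$2$ module): fixing the discriminant pins down the algebra $A$, primitive forms correspond to invertible $A$-modules, and isomorphism classes of the latter form $\pic(A)$. One then only has to identify Wood's abstract bijection with the explicit generator $\langle a,\ \omega+\frac{\pi-b}{2}\rangle$, which is the content of \cite[Corollary~3.25]{moi}.
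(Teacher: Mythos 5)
The first thing to say is that the paper contains no internal proof of this proposition at all: it is recalled verbatim from \cite[Corollary~3.25]{moi}, which in turn is derived from Wood's parametrization \cite{Wood}. So your closing paragraph --- deduce everything from Wood's bijection and identify it with the explicit generator $\langle a, \omega+\frac{\pi-b}{2}\rangle$ --- is exactly the route the paper itself takes, while your first three paragraphs attempt the classical Dedekind-style dictionary directly. That direct sketch has the right skeleton (the ideal $I_q$, the norm-form inverse, the observation that the twist $\frac{1}{\det M}$ kills scalar matrices so that $\gltw$-equivalence matches equality of ideal classes, and the identity $N(xa+y(\omega+\frac{\pi-b}{2}))=a\,q(x,\pm y)$), and your instinct that the freeness hypothesis is what rescues the inverse direction is sound: it forces $\pic(R)=0$, hence the content ideal of the norm form on an invertible ideal is principal.

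There is, however, one concrete gap: you misattribute the role of the hypothesis that $x^2\equiv\Delta\pmod{4R}$ has a \emph{unique} solution $x$ modulo $2R$. You use it only to argue that the algebra $A$ is independent of the lift $\pi$, but that independence needs nothing beyond the fact that two lifts of the same $x$ differ by an element of $2R$ (absorbed by $\omega\mapsto\omega-r$, as you say). The essential use of uniqueness is earlier, in making your forward map defined at all: for a form $[a,b,c]$ of discriminant $\Delta$ one has $b^2\equiv\Delta\pmod{4R}$, so $b\bmod 2R$ is \emph{a} solution of the congruence, and only the uniqueness hypothesis lets you conclude $b\equiv\pi\pmod{2R}$, i.e.\ $\frac{\pi-b}{2}\in R$, so that $\langle a,\omega+\frac{\pi-b}{2}\rangle$ is actually an ideal of $A$. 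Without it, forms of discriminant $\Delta$ split into several ``parities'' and the displayed map is not defined on all of $\cltw_R(\Delta)$ --- this is precisely the phenomenon in the paper's remark about $R=\Z[\sqrt{8}]$ (\cite[Example~2.28]{moi}). As written, your construction of $I_q$ silently assumes $\frac{\pi-b}{2}\in R$, so the very first step of the forward map is unjustified; the fix is the one-line congruence argument above, but it needs to be said, since it is the only place the uniqueness hypothesis does real work.
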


\begin{rem}
	In general, one must care about the existence of different solutions ${x \pmod{2R}}$ of the equation ${\Delta \equiv x^2 \pmod{4R}}$ in Proposition \ref{propbij}, for instance when ${R = \Z[\sqrt{8}]}$ (\cite[Example~2.28]{moi}). The value of such an $x$ modulo $2R$ is called the \emph{parity} and is an invariant of our quadratic forms. For the rings $R$ we shall consider, the parity is completely determined by the discriminant $\Delta$, that is, there is a unique solution ${x \pmod{2R}}$. Indeed, if $R$ is a PID, or satisfies either the fact that $2$ is a unit or the fact that $\left\langle 2 \right\rangle$ is a prime ideal, as will always be the case in the following, then uniqueness is guaranteed by \cite[Proposition~2.26]{moi}.
\end{rem}

\begin{defi} \label{defiprincipalform}
	The \emph{principal form class} is the neutral element of $\cltw_R(\Delta)$. A representative of this class is ${[1,\pi,-\frac{\Delta-\pi^2}{4}]}$, for every ${\pi \in R}$ such that ${\Delta \equiv \pi^2 \pmod{4R}}$.
\end{defi}

Composition of quadratic forms classes over $\Z$ has been well-known since Gauss, and has already been extended to other rings. For example, Cantor described it over ${R = \K[X]}$ for every field $\K$ of characteristic different from $2$ (\cite[\S 3]{Cantor}). At least over $\Z$, there exist various formulae to compute the composition of two given quadratic forms. We extend one such formula to the case when $R$ is a PID, without any particular difficulty. However, our formula requires a coprimality condition on the first coefficients of the quadratic forms, which will be enough for our purpose and easily fulfilled (Lemma~\ref{coprime}).

\begin{prop} \label{compo}
	Let $R$ be a PID with ${2 \neq 0}$. Let ${q_1 = [a_1,b_1,c_1], q_2 = [a_2,b_2,c_2] \in \cltwdp}$, and assume that $a_1$ and $a_2$ are nonzero and coprime. Let ${a_1r_1 + a_2r_2 = 1}$ be a Bézout relation. Then, the composition ${q_1 \ast q_2}$ of $q_1$ and $q_2$ is the class of ${[a_1a_2,B, -\frac{\Delta-B^2}{4a_1a_2}]}$, where ${B = a_1r_1b_2 + a_2r_2b_1}$.
\end{prop}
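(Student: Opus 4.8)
The plan is to transport the statement across the bijection of Proposition~\ref{propbij} and reduce it to a single identity between ideals of the associated quadratic algebra. Set $N := \frac{\Delta - \pi^2}{4}$ and $A := \faktor{R[\omega]}{\langle \omega^2 + \pi\omega - N\rangle}$, and let $\Phi \colon \cltwdp \to \pic(A)$ be the bijection \eqref{bij}; by the very definition of the composition law, $\Phi$ is a group isomorphism, so $\Phi(q_1 \ast q_2) = \Phi(q_1)\Phi(q_2) = [I_1 I_2]$, where $I_i := \langle \omega + \beta_i,\, a_i\rangle$ and $\beta_i := \frac{\pi - b_i}{2}$ (this lies in $R$ by the parity condition, since $b_i \equiv \pi \pmod{2R}$). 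As $\Phi$ is injective, it suffices to prove the ideal identity
\[
I_1 I_2 = J, \qquad \text{where } J := \langle \omega + \gamma,\, a_1 a_2\rangle \text{ and } \gamma := \tfrac{\pi - B}{2},
\]
because $J$ represents the class $\Phi([a_1 a_2, B, c])$ with $c := -\frac{\Delta - B^2}{4 a_1 a_2}$.

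First I would dispatch the elementary preliminaries. Using $b_i = \pi - 2\beta_i$ and $a_1 r_1 + a_2 r_2 = 1$, a direct substitution gives $B = \pi - 2(a_1 r_1 \beta_2 + a_2 r_2 \beta_1)$, hence $\gamma = a_1 r_1 \beta_2 + a_2 r_2 \beta_1$; in particular $\gamma \equiv \beta_1 \pmod{a_1}$ and $\gamma \equiv \beta_2 \pmod{a_2}$. From $B \equiv b_i \pmod{2 a_i}$ together with $\Delta \equiv b_i^2 \pmod{4 a_i}$ one gets $a_i \mid \frac{B^2 - \Delta}{4}$, whence $c \in R$ by coprimality of $a_1, a_2$; moreover $[a_1 a_2, B, c]$ has discriminant $\Delta$ and is primitive (checked locally at each prime, or deduced from the invertibility of $I_1 I_2$), so it is a legitimate element of $\cltwdp$ lying in the domain of $\Phi$. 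Finally I would expand the product, using $\omega^2 = -\pi\omega + N$, into the four generators
\[
g_0 := (\beta_1 + \beta_2 - \pi)\omega + (N + \beta_1 \beta_2), \quad g_1 := a_1(\omega + \beta_2), \quad g_2 := a_2(\omega + \beta_1), \quad g_3 := a_1 a_2.
\]

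The inclusion $J \subseteq I_1 I_2$ is immediate: $a_1 a_2 = g_3$, and the Bézout combination $r_1 g_1 + r_2 g_2 = (a_1 r_1 + a_2 r_2)\omega + (a_1 r_1 \beta_2 + a_2 r_2 \beta_1) = \omega + \gamma$. For the reverse inclusion I would check that each generator lies in $J$. One has $g_3 \in J$ trivially; writing $g_1 = a_1(\omega + \gamma) + a_1(\beta_2 - \gamma)$ and noting $\beta_2 - \gamma = a_2 r_2(\beta_2 - \beta_1)$ shows $g_1 \in J$ (since $a_1(\beta_2 - \gamma) \in \langle a_1 a_2\rangle$), and symmetrically for $g_2$. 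The only substantial step is $g_0$: I would write $g_0 = (\beta_1 + \beta_2 - \pi)(\omega + \gamma) + \kappa$ with $\kappa := (N + \beta_1 \beta_2) - (\beta_1 + \beta_2 - \pi)\gamma \in R$, reducing the problem to the divisibility $a_1 a_2 \mid \kappa$. Here the multiplicative structure of $A$ enters through the identity $a_i c_i = \beta_i^2 - \pi \beta_i - N$: reducing $\kappa$ modulo $a_1$ (so $\gamma \equiv \beta_1$) collapses it to $-(\beta_1^2 - \pi\beta_1 - N) = -a_1 c_1 \equiv 0$, and symmetrically $\kappa \equiv -a_2 c_2 \equiv 0 \pmod{a_2}$; coprimality of $a_1, a_2$ in the PID $R$ then forces $a_1 a_2 \mid \kappa$, giving $g_0 \in J$ and hence $I_1 I_2 = J$.

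I expect the divisibility $a_1 a_2 \mid \kappa$ to be the real crux of the argument: it is the single place where the ring structure of the quadratic algebra interacts with the arithmetic of the coefficients, and it is precisely what pins down $a_1 a_2$ as the leading coefficient of the composite form; everything else is bookkeeping, organized by the two Bézout reductions modulo $a_1$ and $a_2$. The one subsidiary point demanding care is the primitivity of $[a_1 a_2, B, c]$, needed for the triple to lie in $\cltwdp$ at all; I would secure it either by the local computation above — treating any prime dividing $2$ by means of the parity condition — or, more cleanly, by invoking that $I_1 I_2$ is a product of invertible ideals and therefore invertible.
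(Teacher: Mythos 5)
Your proposal is correct and takes essentially the same route as the paper: both transport the statement through the bijection of Proposition~\ref{propbij} and reduce it to the single ideal identity $\left\langle a_1, \omega+\tfrac{\pi-b_1}{2} \right\rangle \left\langle a_2, \omega+\tfrac{\pi-b_2}{2} \right\rangle = \left\langle a_1a_2, \omega + \tfrac{\pi-B}{2} \right\rangle$ in the quadratic algebra. The only difference is bookkeeping: the paper establishes the identity in one stroke by applying a determinant-$1$ matrix to the four generators of the product ideal, whereas you verify the two inclusions directly and obtain the key divisibility $a_1a_2 \mid \kappa$ from coprimality of $a_1,a_2$; your explicit attention to the integrality of the third coefficient and to primitivity is, if anything, more careful than the paper's own writeup, which passes over these points silently.
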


\begin{rem}
	When ${R = \Z}$, the quadratic form ${[a_1a_2,B, -\frac{\Delta-B^2}{4a_1a_2}]}$ with $B$ as above is known as the \emph{Dirichlet composition} of $q_1$ and $q_2$ (\cite[(3.7)]{Cox2}).
\end{rem}

\begin{proof}
	Denote ${\alpha_i := \frac{\pi-b_i}{2}}$, where ${\pi \in R}$ is any element such that ${\Delta \equiv \pi^2 \pmod{4R}}$. By definition of $\ast$ from Proposition~\ref{propbij}, using the same notations, ${q_1 \ast q_2}$ corresponds to the product
	$$\left\langle a_1, \omega + \alpha_1 \right\rangle \left\langle a_2, \omega + \alpha_2 \right\rangle = \left\langle a_1a_2,a_1(\omega+\alpha_2),a_2(\omega + \alpha_1), -\frac{b_1+b_2}{2}\omega + \alpha_1\alpha_2+\frac{\Delta-\pi^2}{4} \right\rangle.$$
	
	Observe the relation
	$$\begin{pmatrix}
	1 & 0 & 0 & 0 \\
	0 & a_2 & -a_1 & 0 \\
	0 & r_1\frac{b_1+b_2}{2} & r_2\frac{b_1+b_2}{2} & 1 \\
	0 & -r_1 & -r_2 & 0
	\end{pmatrix} \begin{pmatrix} a_1a_2 \\ a_1(\omega+\alpha_2) \\ a_2(\omega + \alpha_1) \\ -\frac{b_1+b_2}{2}\omega + \alpha_1\alpha_2+\frac{\Delta-\pi^2}{4} \end{pmatrix} = \begin{pmatrix} a_1a_2 \\ a_1a_2\frac{b_1-b_2}{2} \\ -a_1a_2(r_2c_1 + r_1c_2) \\ -\omega - \frac{\pi - B}{2} \end{pmatrix}.$$
	Since the square matrix on the left hand side has determinant $1$, the above ideal is the same as the one spanned by the elements of the vector on the right hand side. Therefore,
	$$\left\langle a_1, \omega + \alpha_1 \right\rangle \left\langle a_2, \omega + \alpha_2 \right\rangle = \left\langle a_1a_2, \omega + \frac{\pi-B}{2} \right\rangle.$$
	The corresponding quadratic form in bijection~\eqref{bij} is the class of ${[a_1a_2,B, -\frac{\Delta-B^2}{4a_1a_2}]}$, concluding the proof.
\end{proof}

\begin{rem}
	One can also prove Proposition~\ref{compo} with a composition formula in Gauss' style, checking that the relation
	\begin{equation} \label{compoformula}
	(a_1x_1^2 + b_1x_1y_1 + c_1y_1^2)(a_2x_2^2 + b_2x_2y_2 + c_2y_2^2) = a_1a_2X^2 + BXY + \frac{B^2-\Delta}{4a_1a_2}Y^2
	\end{equation}
	is true, where
	\begin{align*}
	X & = x_1x_2 + r_2\frac{b_2-b_1}{2}x_1y_2 + r_1\frac{b_1-b_2}{2}x_2y_1 - (r_2c_1 + r_1c_2)y_1y_2 \\
	\text{and} ~~~~ Y & = a_1x_1y_2 + a_2x_2y_1 + \frac{b_1+b_2}{2}y_1y_2.
	\end{align*}
\end{rem}

\begin{rem}
	If $a_1$ and $a_2$ are not supposed to be coprime, we are still able to give a composition algorithm over a PID, but the gcd of $a_1, a_2$ and $\frac{b_1+b_2}{2}$ shows up and $B$ must be modified (see \cite[Theorem~4.10]{Buell}).
\end{rem}

\section{Genus theory over a PID} \label{secgentheory}

Throughout this Section, $R$ is a PID of characteristic different from $2$. In this paper, Genus Theory will always refer to the construction of a particular group homomorphism from $\cltwdp$ to a quotient of $\left(\faktor{R}{\Delta R}\right)^{\times}$. This homomorphism essentially maps a quadratic form $q$ to its set of values modulo the discriminant $\Delta$. This construction was introduced by Gauss over $\Z$, and we shall check that it extends to arbitrary PIDs.

All the techniques of Subsection~\ref{subsecgencase} are classical, mainly adapted from the case of $\Z$ treated in \cite[\S 1-\S 3]{Cox2}. However, we must keep in mind a difference about the quadratic forms we consider: Genus Theory over $\Z$ is usually done with $\SL_2(\Z)$-equivalence classes, while we are dealing with $\gltw$-ones. We describe the connection between these two cases in Subsection~\ref{subsecrz}.

\subsection{The general case} \label{subsecgencase}

\begin{defi} \label{defisetofvalues}
	Let $q$ be a primitive quadratic form of discriminant ${\Delta \in R}$. Its \emph{set of values in $\left(\faktor{R}{\Delta R}\right)^{\times}$} is given by
	$$\val_{\Delta}(q) := \set{q(x,y) \st x,y \in \faktor{R}{\Delta R}} \cap \left(\faktor{R}{\Delta R}\right)^{\times}.$$
	
	The \emph{set of values in $\left(\faktor{R}{\Delta R}\right)^{\times}$ of the class of $q$} is ${H_q := \displaystyle{\bigcup_{q' \sim q} \val_{\Delta}(q')}}$.
\end{defi}

\begin{rem} \label{remHq}
	Two equivalent quadratic forms represent the same values up to units of $R$, because of the twist by the determinant of the acting matrix. Therefore, for all ${q \in \cltwdp}$, we have ${H_q = R^{\times}\val_{\Delta}(q)}$, where by abuse of notations $R^{\times}$ stands for its image by the canonical projection ${R \longrightarrow \faktor{R}{\Delta R}}$.
\end{rem}

\begin{prop} \label{H0}
	Let $H_0$ be the set of values taken by the principal form class in $\left(\faktor{R}{\Delta R}\right)^{\times}$. Then $H_0$ is a subgroup of $\left(\faktor{R}{\Delta R}\right)^{\times}$, containing the squares. Moreover, if $2$ is invertible in $\faktor{R}{\Delta R}$, then ${H_0 = R^{\times}\left(\faktor{R}{\Delta R}\right)^{\times \square}}$, where the exponent ${}^{\square}$ denotes the subgroup of squares.
\end{prop}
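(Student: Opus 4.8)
The plan is to reduce the whole statement to an analysis of the values of the single representative $q_0 := \left[1, \pi, -\frac{\Delta - \pi^2}{4}\right]$ of the principal class from Definition~\ref{defiprincipalform}. By Remark~\ref{remHq} we have $H_0 = R^{\times} \val_{\Delta}(q_0)$, where $R^{\times}$ denotes its image in $\faktor{R}{\Delta R}$, so it is enough to describe $\val_{\Delta}(q_0)$ and then multiply by the subgroup $R^{\times}$. The key observation is that $q_0$ is the norm form of the quadratic $R$-algebra $A := \faktor{R[\omega]}{\left\langle \omega^2 + \pi\omega - \frac{\Delta - \pi^2}{4} \right\rangle}$ from Proposition~\ref{propbij}: writing $\bar{\omega} := -\pi - \omega$ for the image of $\omega$ under the standard conjugation (a ring involution) and $N(\xi) := \xi\bar{\xi}$ for the resulting multiplicative norm, a direct computation yields $q_0(x,y) = N(x - y\omega)$.

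I would then reduce modulo $\Delta$. Set $\bar{A} := \faktor{A}{\Delta A}$, a free $\faktor{R}{\Delta R}$-algebra of rank $2$ with basis $\{1, \omega\}$, equipped with the induced conjugation and multiplicative norm $N \colon \bar{A} \to \faktor{R}{\Delta R}$. As $(x,y)$ runs over $\left( \faktor{R}{\Delta R} \right)^2$, the element $x - y\omega$ runs over all of $\bar{A}$, whence $\val_{\Delta}(q_0) = N(\bar{A}) \cap \left( \faktor{R}{\Delta R} \right)^{\times}$. Next I would establish the elementary equivalence $\xi \in \bar{A}^{\times} \iff N(\xi) \in \left( \faktor{R}{\Delta R} \right)^{\times}$: if $N(\xi) = \xi\bar{\xi}$ is a unit, then $\bar{\xi}\,N(\xi)^{-1}$ inverts $\xi$, and conversely $N$ is multiplicative with $N(1) = 1$. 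Hence $\val_{\Delta}(q_0) = N\left( \bar{A}^{\times} \right)$ is the image of the group homomorphism $N \colon \bar{A}^{\times} \to \left( \faktor{R}{\Delta R} \right)^{\times}$, so it is a subgroup, and therefore so is $H_0 = R^{\times}\val_{\Delta}(q_0)$. Taking $y = 0$ gives $N(u) = u^2$ for every unit $u$, so all squares of units already lie in $\val_{\Delta}(q_0) \subseteq H_0$.

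For the last assertion, suppose $2$ is invertible in $\faktor{R}{\Delta R}$. The inclusion $R^{\times}\left( \faktor{R}{\Delta R} \right)^{\times \square} \subseteq H_0$ follows from what precedes, so by Remark~\ref{remHq} it remains to prove $\val_{\Delta}(q_0) \subseteq \left( \faktor{R}{\Delta R} \right)^{\times \square}$. Here I would complete the square: since $\frac{\Delta - \pi^2}{4} \in R$ and $2$ is invertible, one finds $q_0(x,y) = \left( x + \frac{\pi}{2}y \right)^2 - \frac{\Delta}{4}y^2 \equiv \left( x + \frac{\pi}{2}y \right)^2 \pmod{\Delta R}$, using $\frac{\Delta}{4} = \frac14\Delta \equiv 0$. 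Thus every value of $q_0$ is a square modulo $\Delta$; when it is a unit, its square root $x + \frac{\pi}{2}y$ is automatically a unit, so $\val_{\Delta}(q_0) = \left( \faktor{R}{\Delta R} \right)^{\times \square}$ and $H_0 = R^{\times}\left( \faktor{R}{\Delta R} \right)^{\times \square}$.

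The step I expect to be the main obstacle is closure of $H_0$ under inverses: because $\left( \faktor{R}{\Delta R} \right)^{\times}$ need not be finite (for instance when $R = \K[X]$ with $\K$ infinite), one cannot simply argue that a finite submonoid of a group is a subgroup. Identifying $\val_{\Delta}(q_0)$ with the image of the norm homomorphism on $\bar{A}^{\times}$ is exactly what circumvents this. Multiplicative closure by itself also follows from the composition law of Proposition~\ref{compo}, since $q_0 \ast q_0 = q_0$ forces $q_0(x_1,y_1)\,q_0(x_2,y_2) = q_0(X,Y)$; but obtaining inverses still relies on the norm argument above.
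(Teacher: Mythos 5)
Your proof is correct, but it takes a genuinely different route from the paper's for the main (subgroup) part. The paper proves closure of $\val_{\Delta}(q_0)$ under multiplication by writing out an explicit composition identity (a special case of \eqref{compoformula}, i.e.\ norm multiplicativity in coordinates), and proves closure under inverses by a homogeneity trick: if $\alpha = q_0(x,y)$ is a unit, then $q_0(x\alpha^{-1},y\alpha^{-1}) = \alpha^{-2}q_0(x,y) = \alpha^{-1}$, so no algebra structure is ever invoked. You instead recognize $q_0$ as the norm form of the quadratic algebra of Proposition~\ref{propbij}, reduce that algebra modulo $\Delta$, and identify $\val_{\Delta}(q_0)$ with the image of the norm homomorphism $N \colon \overline{A}^{\times} \to \left(\faktor{R}{\Delta R}\right)^{\times}$, using the criterion that $\xi$ is a unit iff $N(\xi)$ is. This handles multiplicativity and inverses in one stroke and yields a slightly stronger, more structural statement (a precise description of $\val_{\Delta}(q_0)$ as $N(\overline{A}^{\times})$, consonant with the paper's ideal-theoretic framework); the cost is that you must verify the algebra, its involution, and the norm all descend modulo $\Delta$, which the paper's elementary computations avoid. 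The treatment of squares and of the case where $2$ is invertible (completing the square) is identical in both proofs. One minor caveat: your closing claim that $q_0 \ast q_0 = q_0$ ``forces'' the pointwise identity $q_0(x_1,y_1)q_0(x_2,y_2) = q_0(X,Y)$ is backwards as stated --- equality of classes under the composition law does not by itself produce polynomial expressions $X,Y$; that identity comes from the explicit formula \eqref{compoformula} --- but this remark is not load-bearing in your argument.
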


\begin{proof}
	As already noticed in Remark~\ref{remHq}, ${H_0 = R^{\times}\val_{\Delta}(q_0)}$ where ${q_0 = [1,\pi,-\frac{\Delta - \pi^2}{4}]}$ is a representative of the principal form class as in Definition~\ref{defiprincipalform}. We focus on $\val_{\Delta}(q_0)$.
	
	It is a straightforward computation to check the following equation, which is a particular case of the composition formula (\ref{compoformula}):
	$$\left(x_1^2+\pi x_1y_1 - \frac{\Delta-\pi^2}{4}y_1^2\right)\left(x_2^2+\pi x_2y_2 - \frac{\Delta-\pi^2}{4}y_2^2\right) = X^2+\pi XY - \frac{\Delta-\pi^2}{4}Y^2$$
	for all ${x_1,x_2,y_1,y_2 \in R}$, where ${X := x_1x_2 + \frac{\Delta-\pi^2}{4}y_1y_2}$ and ${Y := x_1y_2+x_2y_1+\pi y_1y_2}$. This formula proves that $\val_{\Delta}(q_0)$ is stable under multiplication. Furthermore, it is also stable under inversion: if ${\alpha \in \val_{\Delta}(q_0)}$, then there exist ${x,y \in \faktor{R}{\Delta R}}$ such that ${\alpha \equiv x^2+\pi xy - \frac{\Delta-\pi^2}{4}y^2 \pmod{\Delta}}$, hence
	$$\alpha^{-1} \equiv (x\alpha^{-1})^2+\pi (x\alpha^{-1})(y\alpha^{-1}) - \frac{\Delta-\pi^2}{4}(y\alpha^{-1})^2 \in \val_{\Delta}(q_0).$$
	Therefore, $\val_{\Delta}(q_0)$ and $H_0$ are indeed subgroups of $\left(\faktor{R}{\Delta R}\right)^{\times}$.
	
	Clearly, $\val_{\Delta}(q_0)$ contains the squares since ${[1,\pi,-\frac{\Delta-\pi^2}{4}](x,0) = x^2}$ for all $x$. If $2$ is invertible modulo $\Delta$, then for all ${x,y \in R}$, we have
	$$\left[1,\pi,-\frac{\Delta-\pi^2}{4}\right](x,y) \equiv \left(x+\frac{\pi}{2} y\right)^2 \pmod{\Delta},$$
	hence the result.
\end{proof}

The key point of most of the following results is the next Lemma, which makes heavy use of factorization and Bézout relations. Its main outcome for our purpose is the Bézout relation it induces.

\begin{lem} \label{coprime}
	Let ${q \in \cltwdp}$ and let ${h \in R \setminus \set{0}}$. Then $q$ is equivalent to some form $[a,b,c]$ where ${a \neq 0}$ is coprime to $h$.
\end{lem}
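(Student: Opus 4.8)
The plan is to reduce the statement to finding a \emph{unimodular} vector at which $q$ takes a value coprime to $h$, and then to build the required matrix from the associated Bézout relation. Fix a representative $[a,b,c]$ of the class $q$ with $a \neq 0$, which exists by the convention in Proposition~\ref{propbij}. By Remark~\ref{remcompute}, the first coefficient of $M \cdot [a,b,c]$ for $M = \begin{pmatrix} \alpha & \beta \\ \gamma & \delta \end{pmatrix}$ equals $\frac{1}{\det M}\, q(\alpha,\gamma)$. Hence, if I can produce $\alpha,\gamma \in R$ with $\langle \alpha,\gamma \rangle = R$ and $q(\alpha,\gamma)$ coprime to $h$, then choosing $r_1,r_2 \in R$ with $\alpha r_1 + \gamma r_2 = 1$ and setting $M = \begin{pmatrix} \alpha & -r_2 \\ \gamma & r_1 \end{pmatrix} \in \gl_2(R)$ (so that $\det M = 1$), the form $M \cdot q$ has leading coefficient exactly $q(\alpha,\gamma)$, which is coprime to $h$. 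This is the point at which the Bézout relation advertised before the Lemma enters.

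To find such a vector, I would argue prime by prime, using that $R$ is a UFD. Since $q = [a,b,c]$ is primitive, for each prime $p$ dividing $h$ the coefficients $a,b,c$ are not all divisible by $p$, so at least one of $q(1,0) = a$, $q(0,1) = c$, $q(1,1) = a+b+c$ is a unit modulo $p$. This furnishes, for every prime $p \mid h$, a vector $v_p \in \{(1,0),(0,1),(1,1)\}$ with $p \nmid q(v_p)$. As the distinct prime divisors of $h$ are pairwise coprime, the Chinese Remainder Theorem over $R$ yields a single vector $v_0 = (\alpha_0,\gamma_0)$ with $v_0 \equiv v_p \pmod p$ for each $p \mid h$. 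Since $q$ is homogeneous with coefficients in $R$, reduction modulo $p$ gives $q(v_0) \equiv q(v_p) \not\equiv 0 \pmod p$ for every such $p$, so $q(v_0)$ is coprime to $h$.

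It remains to make $v_0$ unimodular without spoiling this coprimality, which is the only genuinely delicate point. Set $d = \gcd(\alpha_0,\gamma_0)$. For each prime $p \mid h$ the reduction of $v_0$ modulo $p$ is one of $(1,0),(0,1),(1,1)$, so $p$ cannot divide both $\alpha_0$ and $\gamma_0$; hence $d$ is coprime to $h$. Writing $v_0 = d \cdot v$ with $v = (\alpha,\gamma)$ unimodular, homogeneity gives $q(v_0) = d^2\, q(v)$, and since $q(v)$ divides $q(v_0)$ it is still coprime to $h$. Feeding this $v$ into the reduction of the first paragraph finishes the argument; the resulting leading coefficient is moreover nonzero, for if it vanished then $h$ would be a unit, in which case $v_0 = (1,0)$ already works and gives leading coefficient $a \neq 0$. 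The main obstacle is precisely this interplay between the coprimality of the represented value and the unimodularity needed to realize the change of variables inside $\gl_2(R)$, and dividing out the greatest common divisor, which is automatically coprime to $h$, is what resolves it.
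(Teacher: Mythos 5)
Your proof is correct and follows essentially the same route as the paper's: reduce to finding a unimodular vector $(\alpha,\gamma)$ with $q(\alpha,\gamma)$ coprime to $h$ (completed to a determinant-one matrix via a Bézout relation), produce such a vector prime by prime from the test values $q(1,0)$, $q(0,1)$, $q(1,1)$ using primitivity and the Chinese Remainder Theorem, and then divide out the gcd, which is automatically coprime to $h$. Your treatment is if anything slightly more explicit than the paper's (in verifying that the gcd is coprime to $h$ and in handling the nonvanishing of the leading coefficient), but the argument is the same.
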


\begin{proof}	
	Denote ${q = [a_0,b_0,c_0]}$. First, note that if there exist ${x_0,y_0 \in R}$ coprime such that ${q(x_0,y_0) = a}$, then $q$ is equivalent to $[a,b,c]$ for some ${b,c \in R}$. Indeed, if we find such $x_0$ and $y_0$, then there exist ${\beta, \delta \in R}$ such that ${x_0 \delta - y_0 \beta = 1}$ (since $R$ is a PID). By Remark~\ref{remcompute}, we deduce that ${\begin{pmatrix} x_0 & \beta \\ y_0 & \delta \end{pmatrix} \cdot [a_0,b_0,c_0] = [a,b,c]}$ for some ${b,c \in R}$, where $a$ is coprime to $h$.
	
	As there is nothing to prove if ${h \in R^{\times}}$, we may assume that $h$ is not a unit. Decompose it as a product of irreducibles: ${h = \underset{i}{\prod} p_i^{r_i}}$ where $p_i$ is prime and ${r_i \geq 1}$ for all $i$. Since $q$ is primitive, a given $p_i$ cannot divide ${q(1,0) = a_0}$, ${q(0,1) = c_0}$ and ${q(1,1) = a_0+b_0+c_0}$ at the same time. Hence, for all $i$, there exist $x_i,y_i$ coprime elements of $R$ such that ${q(x_i,y_i) \not\equiv 0 \pmod{p_i}}$. We lift the pairs ${((x_i,y_i) \pmod{p_i})}$ with the Chinese Remainder Theorem to some ${(x,y) \in R^2}$, and we set ${(x_0,y_0) := \left(\frac{x}{\gcd(x,y)},\frac{y}{\gcd(x,y)}\right)}$. Then $x_0$ and $y_0$ are coprime, and ${a:=q(x_0,y_0)}$ is nonzero and coprime to $h$.
\end{proof}

\begin{rem} \label{remsl2}
	We actually proved that every primitive quadratic form over $R$ is $\SL_2(R)$-equivalent to one whose first coefficient is coprime to $h$.
\end{rem}

\begin{rem} \label{exchange}
	Notice that ${[a,b,c] \sim [c,-b,a]}$ via the matrix ${\begin{pmatrix} 0 & 1 \\ -1 & 0 \end{pmatrix} \in \slz{R}}$. Hence, $q$ is also equivalent to some form $[a',b',c']$ where $c'$ is coprime to $h$.
\end{rem}

\begin{prop} \label{coset}
	Let ${\Delta \in R \setminus \set{0}}$ and let ${q \in \cltwdp}$. Denote by $H_q$ its set of values in $\left(\faktor{R}{\Delta R}\right)^{\times}$, and by $H_0$ the set of values of the principal form class. Then $H_q$ is a coset of $H_0$ in $\left(\faktor{R}{\Delta R}\right)^{\times}$. More precisely, if $[a,b,c]$ is a representative of $q$ with ${a \neq 0}$ coprime to $\Delta$, then ${H_q = a^{-1}H_0}$.
\end{prop}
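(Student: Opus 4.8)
The plan is to exhibit a representative $[a,b,c]$ of $q$ with $a\neq 0$ coprime to $\Delta$ — which exists by Lemma~\ref{coprime} applied with $h=\Delta$ — and then to prove the two inclusions $aH_0\subseteq H_q$ and $aH_q\subseteq H_0$ and reconcile them. Since $a$ is coprime to $\Delta$ it is a unit in $\faktor{R}{\Delta R}$, so $a^{-1}H_0$ is a genuine coset of the subgroup $H_0$ (Proposition~\ref{H0}); establishing $H_q=a^{-1}H_0$ therefore proves both assertions of the statement at once. Throughout I use that, by Remark~\ref{remHq}, $H_q=R^{\times}\val_{\Delta}(q)$ and $H_0=R^{\times}\val_{\Delta}(q_0)$, where $q_0=[1,\pi,-\frac{\Delta-\pi^2}{4}]$ is the principal form of Definition~\ref{defiprincipalform}; note also that $R^{\times}\subseteq H_0$ since $1=q_0(1,0)$, so $H_0$ is $R^{\times}$-saturated.

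For the inclusion $aH_q\subseteq H_0$ I would first record that $b$ and $\pi$ are both solutions of $x^2\equiv\Delta\pmod{4R}$, so by uniqueness of the parity over a PID one has $b\equiv\pi\pmod{2R}$, whence $\frac{b-\pi}{2}\in R$ (the same fact already used in the proof of Proposition~\ref{compo}). A direct expansion then gives the exact polynomial identity $a\,q(x,y)=q_0\!\left(ax+\tfrac{b-\pi}{2}\,y,\;y\right)$, valid over $R$ with no division by $2$. Consequently every unit value of $q$, multiplied by $a$, is a unit value of $q_0$, i.e.\ $a\,\val_{\Delta}(q)\subseteq\val_{\Delta}(q_0)$; saturating by $R^{\times}$ yields $aH_q\subseteq H_0$, that is $H_q\subseteq a^{-1}H_0$.

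For the reverse inclusion I would use composition. As $q_0$ is the neutral element, $q_0\ast q=q$, and since $\gcd(1,a)=1$ the composition formula~\eqref{compoformula} (with the Bézout relation $1\cdot 1+a\cdot 0=1$) shows that the product of any value $q_0(x_1,y_1)$ with the value $q(1,0)=a$ equals a value $q(X,Y)$ of $q$ itself. Hence $a\,\val_{\Delta}(q_0)\subseteq\val_{\Delta}(q)$, and saturating by $R^{\times}$ gives $aH_0\subseteq H_q$. To conclude, observe that squares lie in $H_0$ (Proposition~\ref{H0}), so $a^2\in H_0$ and therefore $a\cdot(aH_0)=a^2H_0=H_0$, i.e.\ the two cosets coincide: $aH_0=a^{-1}H_0$. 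Combining the inclusions, $a^{-1}H_0=aH_0\subseteq H_q\subseteq a^{-1}H_0$, whence $H_q=a^{-1}H_0$, a coset of $H_0$. The one genuine subtlety is that $2$ (hence $4$) need not be invertible modulo $\Delta$, so one cannot complete the square as in the second part of Proposition~\ref{H0}; the exact identity of the second paragraph and the remark that $a^2\in H_0$ are precisely what make the argument go through over an arbitrary PID.
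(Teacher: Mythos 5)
Your proof is correct, and it is half the paper's argument and half a genuinely different one. For the inclusion $H_q \subseteq a^{-1}H_0$ you use exactly the same change of variables as the paper, namely $(x,y) \mapsto \left(ax+\tfrac{b-\pi}{2}y,\,y\right)$; the only difference is that you record it as the exact identity $a\,q(x,y) = q_0\!\left(ax+\tfrac{b-\pi}{2}y,\,y\right)$ over $R$, whereas the paper carries out a completion-of-square computation modulo $\Delta$ with $a^{-1}$ in the coefficients. Your version is slightly cleaner (no division by $a$), and you make explicit the point that $\tfrac{b-\pi}{2}\in R$ by uniqueness of the parity over a PID, which the paper uses silently. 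Where you genuinely diverge is the reverse inclusion: the paper observes that the substitution matrix $\begin{pmatrix} a & \frac{b-\pi}{2} \\ 0 & 1 \end{pmatrix}$ is invertible modulo $\Delta$ and inverts it, obtaining $a^{-1}H_0 \subseteq H_q$ in one line; you instead specialize the composition identity \eqref{compoformula} to $q_0 \ast q$ with the Bézout relation $1\cdot 1 + a\cdot 0 = 1$ (so that the composed form is literally $[a,b,c]$), which yields only $aH_0 \subseteq H_q$, and you then need the auxiliary observation that $a^2 \in H_0$ (squares lie in $H_0$, Proposition~\ref{H0}) to convert $aH_0$ into $a^{-1}H_0$. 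Both routes are valid: the paper's matrix inversion is shorter and self-contained, while yours buys exact polynomial identities throughout and reuses the composition machinery, at the price of the extra step $aH_0 = a^{-1}H_0$.
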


\begin{proof}
	We shall prove the last part of the statement, from which the result follows. As seen in Lemma~\ref{coprime}, there exists a representative $[a,b,c]$ of the class $q$ of quadratic forms such that $a$ is nonzero and coprime to $\Delta$. Let ${x,y \in R}$, let ${\pi \in R}$ such that ${\Delta \equiv \pi^2 \pmod{4R}}$, then
	\begin{align*}
	ax^2 + bxy + cy^2 & \equiv a^{-1}\left(ax+\frac{b-\pi}{2}y\right)^2 + \pi xy + \left(c-a^{-1}\left(\frac{b-\pi}{2}\right)^2\right)y^2 \pmod{\Delta} \\
	& \equiv a^{-1}\left(\left(ax+\frac{b-\pi}{2}y\right)^2 + \pi \left(ax+\frac{b-\pi}{2}y\right)y\right) \\
	& \hspace{4cm} + \left(-a^{-1}\pi\frac{b-\pi}{2} + c-a^{-1}\left(\frac{b-\pi}{2}\right)^2\right)y^2 \pmod{\Delta}.
	\end{align*}
	We compute the $y^2$-term: we have
	\begin{align*}
	-a^{-1}\pi\frac{b-\pi}{2} + c-a^{-1}\left(\frac{b-\pi}{2}\right)^2 & \equiv \frac{-2b\pi+2\pi^2 + 4ac-b^2+2b\pi-\pi^2}{4a}\pmod{\Delta} \\
	& \equiv \frac{\pi^2-\Delta}{4a}\pmod{\Delta}.
	\end{align*}
	Thus, we have
	$$ax^2 + bxy + cy^2 \equiv a^{-1}\left(X^2+\pi XY - \frac{\Delta-\pi^2}{4}Y^2\right)$$
	where ${X := ax+\frac{b-\pi}{2}y}$ and ${Y := y}$.
	
	This being true for all ${x,y \in R}$, we infer that ${H_q \subseteq a^{-1}H_0}$. For the reverse inclusion, notice that ${\begin{pmatrix} ax+\frac{b-\pi}{2}y \\ y \end{pmatrix} = \begin{pmatrix} a & \frac{b-\pi}{2} \\ 0 & 1 \end{pmatrix} \begin{pmatrix} x \\ y \end{pmatrix}}$, and the matrix $\begin{pmatrix} a & \frac{b-\pi}{2} \\ 0 & 1 \end{pmatrix}$ is invertible in $\left(\faktor{R}{\Delta R}\right)^{\times}$. Hence, for all ${X,Y \in R}$, we have
	$$a^{-1}\left[1,\pi,-\frac{\Delta-\pi^2}{4}\right](X,Y) \equiv [a,b,c](a^{-1}X + a^{-1}\frac{\pi-b}{2}Y,Y) \pmod{\Delta}.$$
	
	Thus, ${H_q = a^{-1}H_0}$, as desired.
\end{proof}

Finally, we can construct our desired group homomorphism.

\begin{theo} \label{groupmorphism}
	Let $R$ be a PID of characteristic different from $2$, let ${\Delta \in R}$ be a nonzero discriminant, and let $H_0$ be the set of values of the principal form class in $\left(\faktor{R}{\Delta R}\right)^{\times}$. The map
	$$\psi \colon \cltwdp \longrightarrow \faktor{\left(\faktor{R}{\Delta R}\right)^{\times}}{H_0}$$
	sending the class of a quadratic form to its set of values in $\left(\faktor{R}{\Delta R}\right)^{\times}$ modulo $H_0$ is well-defined and is a group homomorphism.
	
	Its kernel contains the squares, hence it factors through the map
	$$\Psi \colon \faktor{\cltwdp}{\cltwdp^{\square}} \longrightarrow \faktor{\left(\faktor{R}{\Delta R}\right)^{\times}}{H_0},$$
	where $\cltwdp^{\square}$ denotes the subgroup of squares of $\cltwdp$. We call $\Psi$ the \emph{Genus map}.
\end{theo}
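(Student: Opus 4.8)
The plan is to assemble $\psi$ from Propositions~\ref{coset}, \ref{compo} and \ref{H0}, which already carry all the substance; what is left is to combine them with a careful choice of representatives.

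First I would establish well-definedness. By Proposition~\ref{coset}, for any class $q \in \cltwdp$ the set of values $H_q$ is a single coset of $H_0$ in $\left(\faktor{R}{\Delta R}\right)^{\times}$, hence a well-defined element of the quotient $\faktor{\left(\faktor{R}{\Delta R}\right)^{\times}}{H_0}$. Moreover, $H_q = \bigcup_{q' \sim q} \val_\Delta(q')$ depends only on the class of $q$ and not on any chosen representative, so $\psi$ is well-defined.

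Next comes the homomorphism property $\psi(q_1 \ast q_2) = \psi(q_1)\,\psi(q_2)$, which is the technical heart. The hard part will be selecting representatives that satisfy all coprimality hypotheses simultaneously. I would apply Lemma~\ref{coprime} to $q_1$ with $h = \Delta$ to get a representative $[a_1,b_1,c_1]$ with $a_1 \neq 0$ coprime to $\Delta$, then apply Lemma~\ref{coprime} to $q_2$ with $h = \Delta a_1$ to get $[a_2,b_2,c_2]$ with $a_2 \neq 0$ coprime to $\Delta a_1$, hence coprime to both $\Delta$ and $a_1$. Proposition~\ref{coset} then gives $\psi(q_i) = a_i^{-1} H_0$. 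Since $a_1,a_2$ are nonzero and coprime, Proposition~\ref{compo} yields a representative $[a_1 a_2, B, -\frac{\Delta - B^2}{4 a_1 a_2}]$ of $q_1 \ast q_2$; as $a_1 a_2$ is again coprime to $\Delta$, a last use of Proposition~\ref{coset} gives $\psi(q_1 \ast q_2) = (a_1 a_2)^{-1} H_0 = (a_1^{-1} H_0)(a_2^{-1} H_0) = \psi(q_1)\,\psi(q_2)$, using that $H_0$ is a subgroup.

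Finally I would show the kernel contains the squares. As $\cltwdp$ is abelian (Proposition~\ref{propbij}), every element of $\cltwdp^{\square}$ has the form $r \ast r$. Writing $r = [a,b,c]$ with $a$ coprime to $\Delta$ (Lemma~\ref{coprime}), the homomorphism property gives $\psi(r \ast r) = \psi(r)^2 = a^{-2} H_0$; since $a^2$ is a square in $\left(\faktor{R}{\Delta R}\right)^{\times}$ and $H_0$ contains all squares (Proposition~\ref{H0}), we have $a^{-2} \in H_0$, so $\psi(r \ast r)$ is trivial. Hence $\cltwdp^{\square} \subseteq \ker \psi$, and by the universal property of the quotient $\psi$ factors uniquely through the stated map $\Psi$. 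Apart from the representative bookkeeping in the homomorphism step, every step is a direct appeal to the preceding results.
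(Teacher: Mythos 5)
Your proposal is correct and follows essentially the same path as the paper's proof: well-definedness via Proposition~\ref{coset}, the homomorphism property via the same choice of representatives (Lemma~\ref{coprime} with $h=\Delta$ for $q_1$ and $h=\Delta a_1$ for $q_2$, then Proposition~\ref{compo}), and the kernel statement from the fact that $H_0$ contains the squares (Proposition~\ref{H0}). Your kernel step merely spells out explicitly, with $\psi(r\ast r)=a^{-2}H_0$, what the paper states more tersely.
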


\begin{proof}
	It follows from Proposition~\ref{coset} that $\psi$ is well-defined, and if ${q = [a,b,c] \in \cltwdp}$ with ${a \neq 0}$ coprime to $\Delta$, then ${\psi(q) = a^{-1}H_0}$.
	
	Let ${q_1, q_2 \in \cltwdp}$, we need to check that ${\psi(q_1 \ast q_2) = \psi(q_1) \psi(q_2)}$. Write ${q_1 = [a_1,b_1,c_1]}$ and ${q_2 = [a_2,b_2,c_2]}$. As seen in Lemma~\ref{coprime}, we may suppose that $a_1$ is coprime to $\Delta$, and that $a_2$ is coprime to $a_1\Delta$. Hence, ${\psi(q_1) = a_1^{-1}H_0}$ and ${\psi(q_2) = a_2^{-1}H_0}$. By Proposition~\ref{compo}, $a_1$ and $a_2$ being coprime, there exist ${b,c \in R}$ such that ${q_1 \ast q_2 = [a_1a_2,b,c]}$, leading to ${\psi(q_1 \ast q_2) = a_1^{-1}a_2^{-1}H_0 = \psi(q_1)\psi(q_2)}$, hence $\psi$ is a homomorphism.
	
	In view of Proposition~\ref{H0}, the group $H_0$ contains the subgroup of squares of $\left(\faktor{R}{\Delta R}\right)^{\times}$. Therefore, every square in $\cltwdp$ has image in $H_0$, hence ${\cltwdp^{\square} \subseteq \ker(\psi)}$. Thus, taking the quotient gives the desired group homomorphism $\Psi$.
\end{proof}

\begin{rem}
	Genus Theory as described in Theorem~\ref{groupmorphism} does not cover the case $\Delta=0$. However, in that case, the associated quadratic algebra $\faktor{R[\omega]}{\left\langle\omega^2\right\rangle}$ is degenerate, and the class group $\cl^{tw}_R(0)$ is trivial. Let us show this last point: if ${q = [a,b,c] \in \cl^{tw}_R(0)}$, then ${b^2=4ac}$. Write ${a = a_0^2\tilde{a}}$ and ${c = c_0^2\tilde{c}}$ for some square-free $\tilde{a},\tilde{c}$. Since ${b^2 = 4a_0^2c_0^2\tilde{a}\tilde{c}}$, and because $R$ is integrally closed, $\tilde{a}\tilde{c}$ must be a square. Hence, ${\tilde{c} = \varepsilon \tilde{a}}$ for some unit $\varepsilon$, since $\tilde{a}$ and $\tilde{c}$ are squarefree. Likewise, $\varepsilon$ must be a square, say ${\varepsilon = \nu^2}$, and we finally get ${[a,b,c] = [a_0^2\tilde{a}, \pm 2a_0c_0\tilde{a}\nu,c_0^2\nu^2\tilde{a}]}$. Now, let ${r,s \in R}$ be such that ${a_0r-(\pm c_0)s = \tilde{a}^{-1}}$, then ${\begin{pmatrix} a_0 & \pm c_0\nu \\ s & r \end{pmatrix} \cdot [1,0,0] = [a,b,c]}$, and our quadratic form is in the principal form class.
\end{rem}

\begin{rem}
	There are at least two other cases when the Genus map $\Psi$ from Theorem~\ref{groupmorphism} is trivial:
	\begin{enumerate}
		\item[$\bullet$] if ${\Delta \in R^{\times}}$, then the codomain of $\Psi$ is trivial;		
		\item[$\bullet$] if $\cltwdp$ is finite of odd order, then ${\cltwdp = \cltwdp^{\square}}$ and the domain of $\Psi$ is trivial.
	\end{enumerate}
\end{rem}

\begin{defi}
	The \emph{principal genus} is the kernel of the map $\psi$ defined in Theorem~\ref{groupmorphism}.
\end{defi}

Thus, in order to check that a given class $q$ of quadratic forms is non-trivial, it is sufficient to prove that $q$ is not in the principal genus. However, it is is not a necessary condition, as shown in the following example.

\begin{ex}
	Set ${R = \Z}$ and ${\Delta = -56}$. The quadratic form ${q_0 = [1,0,14]}$ is a representative of the principal form class, and the principal genus is ${\psi(q_0) = \pm\set{1,9,15,23,25,39}}$. Notice by the way that ${\pm\left(\faktor{\Z}{56\Z}\right)^{\times\square} \subsetneq \psi(q_0)}$. Let ${q_1 = [2,0,7]}$. Since ${q_1(1,1) = 9 = q_0(3,0)}$, and because sets of values in $\left(\faktor{\Z}{56\Z}\right)^{\times}$ of quadratic forms are either disjoint or equal as cosets of $\psi(q_0)$, we deduce that $q_1$ also lies in the principal genus. On the other hand, they are not equivalent since the equation ${x^2+14y^2 = \pm 2}$ has no solution ${(x,y) \in \Z^2}$.
\end{ex}

\subsection{The case $R = \Z$, $\Delta < 0$} \label{subsecrz}

We compare our map $\Psi$ from Theorem~\ref{groupmorphism} with Cox's exposition of its construction over $\Z$, for negative discriminants \cite[\S 3.B]{Cox2}. The classical construction of the group of classes of primitive binary quadratic forms of given discriminant ${\Delta < 0}$ rather uses $\SL_2(\Z)$ equivalence classes instead of the $\gltw$-ones. To achieve this, one notices that the $\gltw$-equivalence class of a given quadratic form $[a,b,c]$ consists in the union of the $\SL_2(\Z)$-equivalence classes of $[a,b,c]$ and ${\begin{pmatrix} 1 & 0 \\ 0 & -1 \end{pmatrix} \cdot [a,b,c] = [-a,b,-c]}$. In other words, there is a kind of duplication of equivalence classes when one goes from $\gltw$ to $\SL_2(\Z)$, splitting positive definite and negative definite quadratic forms. Thus, when one considers $\SL_2(\Z)$-classes, one first removes negative definite quadratic forms, so that there are as many $\SL_2(\Z)$-equivalence classes of \textbf{positive definite} quadratic forms as $\gltw$-equivalence classes of quadratic forms (positive or negative).

Given some negative integer ${\Delta \equiv 0}$ or ${1 \pmod{4}}$ (the only possible cases over $\Z$), we denote by $\clzsl(\Delta)$ the group of $\SL_2(\Z)$-equivalence classes of primitive positive definite binary quadratic forms of discriminant $\Delta$. According to the above discussion, the natural group homomorphism ${\clzsl(\Delta) \longrightarrow \clztw(\Delta)}$ (which assigns to an $\SL_2(\Z)$-equivalence class the \linebreak$\gltw$-equivalence class it spans) is an isomorphism.

The values in $\left(\faktor{\Z}{\Delta\Z}\right)^{\times}$ taken by a primitive quadratic form of discriminant $\Delta$ are related to the kernel of the Dirichlet character ${\chi \colon \left(\faktor{\Z}{\Delta\Z}\right)^{\times} \longrightarrow \set{\pm 1}}$ defined for all odd primes $p$ not dividing $\Delta$ by ${\chi(p) := \genfrac(){}{0}{\Delta}{p}}$, where $\genfrac(){}{0}{.}{p}$ is the Legendre symbol. See \cite[Lemma~1.14]{Cox2} for a detailed exposition.

Classically, the main result of Genus Theory over $\Z$ is the following: if ${\Delta \equiv 0,1 \pmod{4}}$ is a negative integer, then we have an isomorphism of abelian groups
$$\faktor{\clzsl(\Delta)}{\clzsl(\Delta)^{\square}} \overset{\sim}{\longrightarrow} \faktor{\ker(\chi)}{\val_{\Delta}(q_0)}$$
where $\val_{\Delta}(q_0)$ is the subgroup of values taken by the principal form $q_0$ in $\left(\faktor{\Z}{\Delta\Z}\right)^{\times}$. See \cite[Theorem~3.15]{Cox2}.

In order to compare with our version of Genus Theory, we consider the following diagram

\begin{equation}
\begin{tikzcd} \label{diagcoxlink}
\faktor{\clzsl(\Delta)}{\clzsl(\Delta)^{\square}}
\arrow[r,"\sim "] \arrow[d,"\sim"' labl]
& \faktor{\ker(\chi)}{\val_{\Delta}(q_0)} \arrow[d,"\varphi"] \\
\faktor{\clztw(\Delta)}{\clztw(\Delta)^{\square}} \arrow[r,"\Psi_{\Z}"] &
\faktor{\left(\faktor{\Z}{\Delta\Z}\right)^{\times}}{\pm \val_{\Delta}(q_0)}
\end{tikzcd}
\end{equation}
where $\Psi_{\Z}$ is the homomorphism from Theorem~\ref{groupmorphism} with ${R = \Z}$, and $\varphi$ is the one induced by the inclusion ${\ker(\chi) \hookrightarrow \left(\faktor{\Z}{\Delta\Z}\right)^{\times}}$. Diagram (\ref{diagcoxlink}) is commutative since a given class of quadratic forms ${q = [a,b,c]}$ in the top left corner with $a$ coprime to $\Delta$ (possible by Lemma~\ref{coprime} and Remark~\ref{remsl2}) has image ${\pm a^{-1}\val_{\Delta}(q_0)}$ in the bottom right corner, whatever the chosen path.

\begin{prop} \label{propcasez}
	Let ${\Delta \equiv 0,1 \pmod{4}}$ be a negative integer. Then the maps $\varphi$ and $\Psi_{\Z}$ from Diagram (\ref{diagcoxlink}) are isomorphisms of abelian groups.
\end{prop}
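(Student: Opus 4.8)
The plan is to deduce the statement from the commutativity of Diagram~(\ref{diagcoxlink}), which is already recorded just above, by first settling the map $\varphi$. The top horizontal arrow is an isomorphism by the classical Genus Theory over $\Z$ (\cite[Theorem~3.15]{Cox2}), and the left vertical arrow is an isomorphism by the duplication discussion preceding the statement (it is induced by the group isomorphism $\clzsl(\Delta) \to \clztw(\Delta)$). Since the square commutes, we have $\Psi_{\Z} = \varphi \circ (\text{top}) \circ (\text{left})^{-1}$, so $\Psi_{\Z}$ will be an isomorphism as soon as $\varphi$ is. Everything therefore reduces to the arrow $\varphi$ induced by the inclusion $\ker(\chi) \hookrightarrow \left(\faktor{\Z}{\Delta\Z}\right)^{\times}$.

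To handle $\varphi$, I would record three facts about $\chi$. First, as $\Delta$ is a negative integer it is not a square, so $\chi$ is a nontrivial quadratic character and $\ker(\chi)$ has index $2$ in $\left(\faktor{\Z}{\Delta\Z}\right)^{\times}$. Second, because $\Delta < 0$ one has $\chi(-1) = -1$, i.e. $-1 \notin \ker(\chi)$ (this follows from the Kronecker symbol evaluation $\left(\frac{\Delta}{-1}\right) = \operatorname{sign}(\Delta)$, and it is the only place where the sign hypothesis enters). Third, by \cite[Lemma~1.14]{Cox2} the values in $\left(\faktor{\Z}{\Delta\Z}\right)^{\times}$ of any form of discriminant $\Delta$ lie in $\ker(\chi)$, so in particular $\val_{\Delta}(q_0) \subseteq \ker(\chi)$.

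Combining the first two facts, $\left(\faktor{\Z}{\Delta\Z}\right)^{\times} = \ker(\chi) \sqcup (-1)\ker(\chi)$ with $\langle -1 \rangle \cap \ker(\chi) = \set{1}$, so this abelian group is the internal direct product $\ker(\chi) \times \set{\pm 1}$. By the third fact and Proposition~\ref{H0}, the subgroup $H_0 = \pm\val_{\Delta}(q_0)$ splits compatibly as $\val_{\Delta}(q_0) \times \set{\pm 1}$. Passing to quotients, the inclusion-induced map
$$\faktor{\ker(\chi)}{\val_{\Delta}(q_0)} \longrightarrow \faktor{\ker(\chi) \times \set{\pm 1}}{\val_{\Delta}(q_0) \times \set{\pm 1}} = \faktor{\left(\faktor{\Z}{\Delta\Z}\right)^{\times}}{\pm\val_{\Delta}(q_0)}$$
is an isomorphism, and it is exactly $\varphi$. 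Equivalently, one can check injectivity and surjectivity by hand: if $x \in \ker(\chi) \cap \pm\val_{\Delta}(q_0)$ lay in $-\val_{\Delta}(q_0)$ then $\chi(x) = \chi(-1) = -1$, a contradiction, so $x \in \val_{\Delta}(q_0)$; and for $x$ with $\chi(x) = -1$ the element $-x$ lies in $\ker(\chi)$ and represents the same class. Then $\Psi_{\Z}$ is an isomorphism by the reduction in the first paragraph.

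I do not expect a serious obstacle here: the argument is a short diagram chase resting on the standard behaviour of the genus character $\chi$. The only delicate points are the bookkeeping of the factor $\set{\pm 1} = \Z^{\times}$ coming from the twist — precisely what separates the $\SL_2(\Z)$ and $\gltw$ pictures — and the identity $\chi(-1) = -1$, valid exactly because $\Delta < 0$, which is what confines the proposition to negative discriminants.
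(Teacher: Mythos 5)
Your proposal is correct, and it rests on exactly the same pillars as the paper's proof: the reduction via commutativity of Diagram~(\ref{diagcoxlink}) to the single map $\varphi$, the inclusion $\val_{\Delta}(q_0) \subseteq \ker(\chi)$, and the identity $\chi(-1)=-1$ forced by $\Delta<0$. The difference is in the packaging of the final step. The paper proves injectivity of $\varphi$ (by the same observation you make: $\alpha \in \ker(\chi)$ implies $-\alpha \notin \ker(\chi) \supseteq \val_{\Delta}(q_0)$) and then concludes surjectivity by a counting argument, comparing cardinalities: $\abs{\left(\faktor{\Z}{\Delta\Z}\right)^{\times}} = 2\abs{\ker(\chi)}$ and $\abs{\pm\val_{\Delta}(q_0)} = 2\abs{\val_{\Delta}(q_0)}$. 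You instead exhibit the internal direct product decompositions $\left(\faktor{\Z}{\Delta\Z}\right)^{\times} = \ker(\chi) \times \set{\pm 1}$ and $H_0 = \val_{\Delta}(q_0) \times \set{\pm 1}$ (or, equivalently, your hands-on check that $-x$ represents the class of $x$ when $\chi(x)=-1$), which gives surjectivity directly. Your route buys a proof that never invokes finiteness of the groups involved and isolates structurally why the $\pm 1$ twist is harmless, while the paper's cardinality argument is shorter to write given that the groups are obviously finite; mathematically the two are interchangeable here. One small caveat: your justification of the first fact (``$\Delta$ not a square, so $\chi$ is nontrivial'') is the one step that would need an argument (via reciprocity or Dirichlet), but it is redundant, since your second fact $\chi(-1)=-1$ already gives nontriviality, hence index $2$; the paper sidesteps this by deriving surjectivity of $\chi$ from that same identity.
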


\begin{proof}
	By commutativity of Diagram (\ref{diagcoxlink}), it is enough to show that $\varphi$ is an isomorphism. We start by showing that the source and the target of $\varphi$ have the same size. On the one hand, $\left(\faktor{\Z}{\Delta\Z}\right)^{\times}$ is the disjoint union of the fibres of $\chi$, hence ${\abs{\left(\faktor{\Z}{\Delta\Z}\right)^{\times}} = 2\abs{\ker(\chi)}}$. On the other hand, ${-1 \notin \val_{\Delta}(q_0)}$ since $\val_{\Delta}(q_0)$ is a subgroup of $\ker(\chi)$, but ${\chi(-1) = -1}$ according to \cite[Lemma~1.14]{Cox2}, since $\Delta$ is negative. We thus find that the domain and codomain of $\varphi$ have the same size.
	
	Let us check that $\varphi$ is injective. If ${\alpha \in \ker(\chi)}$, then $-\alpha$ is not in $\ker(\chi)$, hence not in $\val_{\Delta}(q_0)$. This implies that if ${\varphi(\alpha) \in \pm \val_{\Delta}(q_0)}$, then $\alpha$ must be in $\val_{\Delta}(q_0)$. So $\varphi$ is injective, hence bijective by an argument of cardinality.
\end{proof}

In particular, when ${R = \Z}$ and ${\Delta \equiv 0,1 \pmod{4}}$ is negative, the principal genus consists precisely in the subgroup of squares $\clztw(\Delta)^{\square}$.

\subsection{The case $R = \K[X], \Delta = 4f$, and the link with $2$-descent} \label{subsec2descente}

Let $\K$ be a field of characteristic $0$, and let ${f \in \K[X]}$ be a square-free monic polynomial of odd degree ${2g+1}$ with ${g \geq 1}$. We now consider Genus Theory over ${R = \K[X]}$ with discriminant of the specific form ${\Delta = 4f}$, and we compare it to the $2$-descent map of some hyperelliptic curve. Notice that we can write the middle coefficient of a quadratic form as $2b$ instead of $b$; this is consistent with the fact that ${\Delta = 4f}$, and it enables us to avoid fractions while doing computations. Furthermore, if ${q = [a,2b,c] \in \clkx}$, then ${a \neq 0}$ since $f$ has odd degree.

For the following, denote ${L := \faktor{\K[X]}{\left\langle f(X) \right\rangle}}$. Since $2$ is a unit in $\K[X]$, the set $H_0$ of values taken by the principal form class in $L^{\times}$ is just $\K^{\times}L^{\times \square}$ (Proposition~\ref{H0}). In that context, the group homomorphism from Theorem~\ref{groupmorphism} can be written as
\begin{equation} \label{mappsi}
\Psi \colon \left \{ \begin{aligned} \faktor{\clkx}{\clkx^{\square}} & \longrightarrow \faktor{L^{\times}}{\K^{\times}L^{\times \square}} \\ \Big[[a,2b,c]\Big] \text{~with~} \gcd(a,f) = 1 & \longmapsto a^{-1}\K^{\times}L^{\times \square} \end{aligned} \right..
\end{equation}

Recall that ${\clkx \simeq \pic\left(\faktor{\K[X,Y]}{\left\langle Y^2 - f(X) \right\rangle}\right)}$ by Proposition~\ref{propbij}. In order to make the link with $2$-descent, note that $\spec\left(\faktor{\K[X,Y]}{\left\langle Y^2 - f(X) \right\rangle}\right)$ is a degree $2$-cover of $\A^1_{\K}$. As a smooth affine curve, it can be uniquely completed into a smooth projective curve $\mathcal{C}$. The curve $\mathcal{C}$ is a \emph{hyperelliptic curve over $\K$}, that is, a smooth projective geometrically connected $\K$-curve of genus ${g \geq 1}$, endowed with a degree $2$ map ${\mathcal{C} \longrightarrow \PP_{\K}^1}$.

\begin{rem} \label{remodddegree}
	Since $f$ has odd degree, the hyperelliptic curve $\mathcal{C}$ has a $\K$-rational Weierstrass point $\infty$ lying above the point at infinity of $\PP^1_{\K}$.
	
	Conversely, given a hyperelliptic curve over $\K$ with a rational Weierstrass point, we can shift it above the point at infinity. Then it is a standard fact that the curve deprived of this point can be described by an affine equation ${Y^2=f(X)}$ where ${f \in \K[X]}$ is square-free and monic of odd degree.
\end{rem}

Denote by $J$ the Jacobian variety of $\mathcal{C}$. Let ${D = \displaystyle{\sum_{i=1}^r n_i\Big((x_i,y_i) - \infty\Big)}}$ be a degree $0$ divisor on $\mathcal{C}$, and assume that none of the $(x_i,y_i)$ is a Weierstrass point. Set ${\overline{L} := \faktor{\overline{\K}[X]}{\left\langle f(X) \right\rangle}}$ where $\overline{\K}$ is the algebraic closure of $\K$. We define ${\lambda(D) := \displaystyle{\prod_{i=1}^r (x_i-X)^{n_i} \in \overline{L}^{\times}}}$, and it may be shown that this induces a group homomorphism
$$\lambda \colon \faktor{\jk}{2\jk} \longrightarrow \faktor{L^{\times}}{L^{\times \square}},$$
which we refer to as the \emph{$2$-descent map} \cite[Lemmas~2.1 and 2.2]{Sch}.

Now, let us describe the isomorphism between ${\clkx \simeq \pic\left(\faktor{\K[X,Y]}{\left\langle Y^2 - f(X) \right\rangle}\right)}$ and ${\jk = \pic^0(\mathcal{C}) \simeq \pic(\mathcal{C}\setminus \set{\infty})}$. Let ${[a,2b,c] \in \clkx}$. We know from bijection~\eqref{bij} that the class of the quadratic form $[a,2b,c]$ correponds to the ideal class ${\left\langle a, Y-b \right\rangle}$ in $\pic\left(\faktor{\K[X,Y]}{\left\langle Y^2 - f(X) \right\rangle}\right)$. This induces a Weil divisor on ${\mathcal{C}\setminus \set{\infty}}$ defined as the vanishing locus of ${\left\langle a, Y-b \right\rangle}$, which we denote by ${\dv(a) \cap \dv(Y-b)}$. Since our hyperelliptic curve is smooth, this indeed corresponds to a Cartier divisor on ${\mathcal{C}\setminus \set{\infty}}$. We associate to it a degree $0$ divisor on $\mathcal{C}$, that is, a point in $J(\K)$, by removing a suitable multiple of $\infty$. We thus obtain a group isomorphism

$$\mathcal{D} \colon \left \{ \begin{aligned} \clkx & \longrightarrow \jk \\ \Big[q = [a,2b,c]\Big] & \longmapsto \dv(a) \cap \dv(Y-b)-\deg(a)\infty \end{aligned} \right..$$
Notice that ${\dv(a) \cap \dv(Y-b) = \underset{i=1}{\overset{\deg(a)}{\sum}} (x_i,y_i)}$ where ${\set{x_1,\ldots,x_{\deg(a)}} \subset \overline{\K}}$ is the set of roots of $a$, and ${b(x_i)=y_i}$ for all $i$.

\begin{rem}
	Mumford parametrized divisor classes in ${J(\K) = \pic^0(\mathcal{C})}$ by triples of polynomials, and it has been well-known that they correspond to the coefficients of the associated quadratic forms as above. He further proved that a given divisor class has a unique reduced representative, whose associated triple of polynomials satisfies some bounds on their degrees \cite[Proposition~1.2 and page~3.29]{Mumford}. Applying a reduction algorithm based on Euclidean division allows to fully recover his parametrization.
\end{rem}

We still denote by $\mathcal{D}$ the induced map ${\faktor{\clkx}{\clkx^{\square}} \overset{\sim}{\longrightarrow} \faktor{\jk}{2\jk}}$, by abuse of notations. In order to compare Genus Theory and $2$-descent, we reproduce Diagram (\ref{diag2descintro}) from the introduction:
\begin{equation}
\begin{tikzcd} \label{diag2desc}
\faktor{\clkx}{\clkx^{\square}}
\arrow[r,"\sim ", "\mathcal{D}"'] \arrow[d,"\Psi "']
& \faktor{\jk}{2\jk} \arrow[d,"\lambda"] \\
\faktor{L^{\times}}{\K^{\times}L^{\times \square}} &
\faktor{L^{\times}}{L^{\times \square}}
\arrow[l,"pr "]
\end{tikzcd}
\end{equation}
where $pr$ is the natural projection, sending $\alpha L^{\times \square}$ to $\alpha \K^{\times}L^{\times \square}$ for all ${\alpha \in L^{\times}}$. Our first goal is to prove that this diagram is commutative. Then, we will derive the injectivity of $\Psi$ from the injectivity of $\lambda$.

\begin{prop} \label{explx-t}
	Let ${f \in \K[X]}$ be a square-free monic polynomial of odd degree ${2g+1}$ with ${g \geq 1}$. Let ${q \in \clkx}$, and let $[a,2b,c]$ be a representative of $q$ with $a$ coprime to $f$ (possible by Lemma~\ref{coprime}). Then we have
	$$\lambda(\mathcal{D}(q)) = \frac{(-1)^{\deg(a)}}{\lc(a)}aL^{\times \square},$$
	where $\lc(a)$ denotes the leading coefficient of $a$.
	
	In particular, Diagram (\ref{diag2desc}) is commutative.
\end{prop}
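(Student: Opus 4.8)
The plan is to unwind the definitions of $\mathcal{D}$ and $\lambda$ on the chosen representative and reduce everything to an elementary polynomial factorization. First I would fix the representative $[a,2b,c]$ with $\gcd(a,f)=1$ and recall from the description of $\mathcal{D}$ that $\mathcal{D}(q) = \sum_{i=1}^{\deg(a)}\big((x_i,y_i)-\infty\big)$, where $x_1,\ldots,x_{\deg(a)}$ run over the roots of $a$ in $\overline{\K}$ (with multiplicity) and $y_i = b(x_i)$. Before applying $\lambda$, I would check that this divisor has the admissible shape, i.e. that none of the $(x_i,y_i)$ is a Weierstrass point: since $\gcd(a,f)=1$, each root $x_i$ satisfies $f(x_i)\neq 0$, so $y_i^2 = f(x_i)\neq 0$ and the point is non-Weierstrass. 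As $\lambda$ is well-defined on $\faktor{\jk}{2\jk}$, this admissible representative may be used to compute it.

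Applying the defining formula for $\lambda$ with all multiplicities equal to $1$ then gives
$$\lambda(\mathcal{D}(q)) = \prod_{i=1}^{\deg(a)}(x_i-X)\cdot L^{\times \square}.$$
The key (and essentially only) computation is to recognise this product as a scalar multiple of $a$: factoring $a(X) = \lc(a)\prod_{i=1}^{\deg(a)}(X-x_i)$ over $\overline{\K}$ and pulling out the $\deg(a)$ sign changes yields $\prod_i (x_i - X) = \tfrac{(-1)^{\deg(a)}}{\lc(a)}\,a(X)$. Since $\gcd(a,f)=1$, the image of $a$ in $L$ is a unit, so the right-hand side genuinely lies in $L^{\times}$ (and it visibly has coefficients in $\K$, so no passage to $\overline{L}$ is needed). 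This establishes the displayed formula $\lambda(\mathcal{D}(q)) = \tfrac{(-1)^{\deg(a)}}{\lc(a)}\,a\,L^{\times \square}$.

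For the commutativity of Diagram~(\ref{diag2desc}), I would apply $pr$ to this value. The projection absorbs the scalar $\tfrac{(-1)^{\deg(a)}}{\lc(a)} \in \K^{\times}$ into the $\K^{\times}$-factor, so that $pr(\lambda(\mathcal{D}(q))) = a\,\K^{\times}L^{\times \square}$. Comparing with $\Psi(q) = a^{-1}\K^{\times}L^{\times \square}$ coming from \eqref{mappsi}, it remains only to note that $\faktor{L^{\times}}{L^{\times \square}}$ has exponent $2$, whence $a\equiv a^{-1} \pmod{L^{\times \square}}$ (their quotient $a^2$ is a square); thus $a\,\K^{\times}L^{\times \square} = a^{-1}\,\K^{\times}L^{\times \square} = \Psi(q)$, and the square commutes.

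I do not expect a genuine obstacle here: the content is purely definitional once the divisor $\mathcal{D}(q)$ is written out explicitly. The only points demanding care are the verification that the roots of $a$ give non-Weierstrass points (so that $\lambda$ applies to this representative), the sign and leading-coefficient bookkeeping in the factorization, and the final exponent-$2$ observation reconciling $a$ with $a^{-1}$ modulo squares.
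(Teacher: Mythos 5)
Your proposal is correct and follows essentially the same route as the paper: unwind $\mathcal{D}(q)$ into the divisor $\sum_i (x_i,y_i)-\deg(a)\infty$, use the coprimality of $a$ and $f$ to see that no $(x_i,y_i)$ is a Weierstrass point, apply the defining formula for $\lambda$, and identify $\prod_i(x_i-X)$ with $\frac{(-1)^{\deg(a)}}{\lc(a)}\,a$ before projecting by $pr$. The only differences are cosmetic: the paper first treats irreducible $a$ (so as to invoke Schaefer's Lemma~2.2 for a sum of conjugate points) and then sums over irreducible factors, and it leaves implicit the identification $a\K^{\times}L^{\times\square}=a^{-1}\K^{\times}L^{\times\square}$ that you spell out via the exponent-$2$ observation.
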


\begin{proof}
	Write ${\dv(a) \cap \dv(Y-b) = \underset{i=1}{\overset{\deg(a)}{\sum}} (x_i,y_i)}$ with ${a(x_i)=0}$ and ${b(x_i)=y_i}$. If ${y_i = 0}$, then $x_i$ is both a root of $a$ and $b$, hence a root of ${b^2-ac = f}$. Since $a$ and $f$ are coprime by assumption, this cannot happen, and $y_i$ must be nonzero.
	
	First, assume that $a$ is an irreducible polynomial, and denote by $d$ its degree. Since the points ${(x_i,y_i)}$ appearing in $\mathcal{D}(q)$ are such that ${a(x_i) = 0}$, we have
	$$\dv(a) \cap \dv(Y-b) - d\infty = \sum_{i=1}^d (x_i,y_i) - d\infty = \sum_{i=1}^d \sigma_i((x_1,y_1)) - d\infty,$$
	where the $\sigma_i((x_1,y_1))$ are all the conjugates in $\overline{\K}$ over $\K$ of ${(x_1,y_1)}$. Applying \cite[Lemma~2.2]{Sch}, we get
	\begin{equation*}
	\lambda(\mathcal{D}(q)) =  \left(\prod_{i=1}^d (x_i - X)\right)L^{\times \square} = \frac{(-1)^d}{\lc(a)} a L^{\times \square}.
	\end{equation*}
	
	In the general case, if ${\displaystyle{a = \prod_i a_i}}$ is the decomposition of $a$ into irreducible factors, then
	$$\dv(a) \cap \dv(Y-b) - \deg(a)\infty = \sum_i \Big(\dv(a_i) \cap \dv(Y-b) - \deg(a_i)\infty\Big),$$
	hence the result follows from the irreducible case.
	
	To conclude, the diagram is commutative since
	$$pr \circ \lambda \circ \mathcal{D}(q) = pr\left(\frac{(-1)^{\deg(a)}}{\lc(a)}aL^{\times \square}\right) = a\K^{\times}L^{\times\square} = \Psi(q),$$
	hence the result
\end{proof}

Thus, the commutativity of Diagram (\ref{diag2desc}) reveals a strong relation between Genus Theory over $\K[X]$ and $2$-descent on hyperelliptic curves over $\K$, at least when $\deg(f)$ is odd.

\begin{theo} \label{inj}
	Let ${f \in \K[X]}$ be a square-free monic polynomial of odd degree ${2g+1}$ with ${g \geq 1}$. Then the Genus map $\Psi$ from (\ref{mappsi}) is injective.
\end{theo}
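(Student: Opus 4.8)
The plan is to deduce the injectivity of $\Psi$ from the injectivity of the $2$-descent map $\lambda$, using the commutativity of Diagram~(\ref{diag2desc}) together with the fact that $\mathcal{D}$ is an isomorphism. Since the proof of Proposition~\ref{explx-t} gives $\Psi = pr \circ \lambda \circ \mathcal{D}$, and since $\mathcal{D}$ is an isomorphism while $\lambda$ is injective (a standard fact of $2$-descent on odd-degree hyperelliptic models, \cite{Sch}), the composite $\lambda \circ \mathcal{D}$ is injective. Hence $\Psi$ is injective if and only if $pr$ is injective on the image of $\lambda$, i.e. if and only if $\mathrm{im}(\lambda) \cap \ker(pr) = \{1\}$ inside $\faktor{L^\times}{L^{\times\square}}$. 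As $\ker(pr)$ is exactly the image of $\K^\times$ in $\faktor{L^\times}{L^{\times\square}}$, the task reduces to proving: if $\lambda(P) \equiv c \pmod{L^{\times\square}}$ for some $P \in \jk$ and some $c \in \K^\times$, then $\lambda(P)$ is trivial.

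The key tool is the norm map $N \colon \faktor{L^\times}{L^{\times\square}} \longrightarrow \faktor{\K^\times}{\K^{\times\square}}$. First I would check that $\mathrm{im}(\lambda) \subseteq \ker(N)$. This follows from the explicit description $\lambda(D) = \prod_i (x_i - X)^{n_i}$: because $f$ is monic, $N_{L/\K}(x_i - X) = f(x_i)$, and since each $(x_i,y_i)$ lies on $Y^2 = f(X)$ (and is non-Weierstrass for the representative $[a,2b,c]$ with $a$ coprime to $f$, so $y_i \neq 0$), one has $f(x_i) = y_i^2$. As $D$ is a $\K$-rational divisor, the element $\prod_i y_i^{n_i}$ is Galois-invariant, hence lies in $\K^\times$, so $N(\lambda(D)) = \prod_i f(x_i)^{n_i} = \big(\prod_i y_i^{n_i}\big)^2 \in \K^{\times\square}$. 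Equivalently, this can be read off from the formula $\lambda(\mathcal{D}(q)) = \frac{(-1)^{\deg a}}{\lc(a)}\,a\,L^{\times\square}$ of Proposition~\ref{explx-t} by computing $N_{L/\K}(a)$ through the resultant of $a$ and $f$.

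With this in hand, suppose $\lambda(P) \equiv c \pmod{L^{\times\square}}$ with $c \in \K^\times$. Applying $N$ and using $[L:\K] = \deg f = 2g+1$, I obtain $N(\lambda(P)) \equiv c^{2g+1} \equiv c \pmod{\K^{\times\square}}$, since an odd power is congruent to the element itself modulo squares. But $\lambda(P) \in \ker(N)$, so $c$ is a square in $\K^\times$; therefore $\lambda(P) \equiv c \equiv 1 \pmod{L^{\times\square}}$ is trivial. The injectivity of $\lambda$ then forces $P \in 2\jk$. Unwinding through the isomorphism $\mathcal{D}$, any class $q$ with $\Psi(q)=1$ satisfies $\lambda(\mathcal{D}(q)) \in \ker(pr)$, hence $\mathcal{D}(q)=0$ and $q$ is trivial; this is precisely the injectivity of $\Psi$.

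The main obstacle is the inclusion $\mathrm{im}(\lambda) \subseteq \ker(N)$ combined with the exploitation of the parity of $\deg f$. The oddness of $2g+1$ is essential: it is what makes $c^{\deg f} \equiv c$ (rather than $\equiv 1$) modulo squares, so that the norm condition pins $c$ down to a square and kills the discrepancy between $\faktor{L^\times}{L^{\times\square}}$ and $\faktor{L^\times}{\K^\times L^{\times\square}}$ introduced by $pr$. This is exactly the place where the hypothesis of odd degree is used, and it is the step that would fail for an even-degree model, where the constant subgroup $\K^\times$ may genuinely meet the image of $\lambda$ nontrivially.
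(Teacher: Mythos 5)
Your proof is correct and follows essentially the same route as the paper: reduce via the isomorphism $\mathcal{D}$ to showing $\mathrm{im}(\lambda)\cap\ker(pr)$ is trivial, use the norm map $N\colon \faktor{L^{\times}}{L^{\times\square}} \to \faktor{\K^{\times}}{\K^{\times\square}}$ together with the oddness of $\deg f = 2g+1$ to show any constant $c\in\K^{\times}$ in the image of $\lambda$ must be a square, and finish with Schaefer's injectivity of $\lambda$. The only difference is that you verify $\mathrm{im}(\lambda)\subseteq\ker(N)$ directly from the formula $\lambda(D)=\prod_i(x_i-X)^{n_i}$ and $f(x_i)=y_i^2$, where the paper simply cites \cite[Theorem~1.1]{Sch}.
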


\begin{proof}
	Since $\mathcal{D}$ is an isomorphism in Diagram (\ref{diag2desc}), it is enough to show that ${pr \circ \lambda}$ is injective. Let ${D \in \ker(pr \circ \lambda)}$. Then ${\lambda(D) = \K^{\times}L^{\times\square}}$, and we can choose a representative $\varepsilon a^2$ of ${\lambda(D)}$ for some ${\varepsilon \in \K^{\times}}$ and some ${a \in L^{\times}}$.
	
	According to \cite[Theorem~1.1]{Sch}, the quantity $\varepsilon a^2$ is in the kernel of the \emph{norm} ${N \colon \faktor{L^{\times}}{L^{\times\square}} \longrightarrow \faktor{\K^{\times}}{\K^{\times\square}}}$, which is the restriction of the usual norm ${\faktor{\overline{\K}[X]}{\left\langle f(X) \right\rangle} \simeq \overline{\K}^{2g+1} \longrightarrow \overline{\K}}$ given by ${(\alpha_1,\ldots,\alpha_{2g+1}) \mapsto \underset{i=1}{\overset{2g+1}{\prod}} \alpha_i}$. We have ${N(\varepsilon a^2) = N(\varepsilon) N(a)^2 = N(\varepsilon)}$, since squares are in the kernel of $N$. As ${\varepsilon \in \K}$, we get ${N(\varepsilon) = \varepsilon^{2g+1}}$, which is in the same class as $\varepsilon$ modulo the squares. From all of this we deduce that $\varepsilon$ must be a square, and ${\lambda(D) = L^{\times\square}}$, meaning that ${D \in \ker(\lambda)}$. But $\lambda$ is injective by \cite[Theorem~1.2]{Sch}, hence ${D = 0}$ and ${pr \circ \lambda}$ is injective, as desired.
\end{proof}

\begin{rem}
	The fact that $f$ has odd degree is crucial in the proof of Theorem~\ref{inj}. It also plays an important role in the construction of Diagram (\ref{diag2desc}): if $f$ has even degree, then the corresponding hyperelliptic curve $\mathcal{C}$ has two points $\infty_+$ and $\infty_-$ at infinity, and it is not clear how to relate ${\pic(\mathcal{C}\setminus\set{\infty_+,\infty_-})}$ to $\pic^0(\mathcal{C})$. On the other hand, when $\deg(f)$ is even, the $2$-descent map $\lambda$ is slightly different; in particular, its codomain is no longer $\faktor{L^{\times}}{L^{\times \square}}$ but $\faktor{L^{\times}}{\K^{\times}L^{\times \square}}$. Furthermore, it may be non injective: according to \cite[Proposition~5]{Poonen}, if ${f \in \Q[X]}$ has degree $6$, is irreducible and has Galois group $S_6$, then ${\ker(\lambda)}$ has order $2$. It would be interesting to see if this affects the possible injectivity of the Genus map in that context.
\end{rem}

\section{Non-trivial specializations in families of class groups of quadratic fields extensions} \label{secspecial}

\subsection{Description of the problem} \label{subsecmotivspecialization}

From now onwards, we consider a hyperelliptic curve $\mathcal{C}$ of genus ${g \geq 1}$ over $\K$ as in Subsection~\ref{subsec2descente}, but this time $\K$ is a number field. We assume that $\mathcal{C}$ has a rational Weierstrass point. As already mentioned in Remark~\ref{remodddegree}, we shift this point above the point at infinity and we denote it by $\infty$. We then choose an affine equation of ${\mathcal{C} \setminus \set{\infty}}$ of the form ${Y^2 = f(X)}$ where $f$ is a square-free monic polynomial of degree ${2g+1}$. In this setting, we further assume without loss of generality that ${f \in \okx}$.

Let ${\mathcal{W} := \spec\left(\faktor{\ok[X,Y]}{\left\langle Y^2-f(X) \right\rangle}\right)}$. Then $\mathcal{W}$ is an affine $\ok$-scheme whose generic fibre is the curve ${\mathcal{C} \setminus \set{\infty}}$. Our motivation is the following. Given a non-trivial ideal class $I$ in $\pic\left(\mathcal{W}\right)$, can we find algebraic integers ${n \in \ok}$ such that the ``specialization'' of $I$ at ${X = n}$ is a non-trivial ideal class in $\pic\left(\faktor{\ok[Y]}{\left\langle Y^2-f(n)\right\rangle}\right)$\,? Or even better, in $\pic\left(\rint{\K\left(\sqrt{f(n)}\right)}\right)$, provided $f(n)$ is not a square\,?

\begin{ex}\label{exsieve}
	Let ${\K = \Q}$ and let ${f(X) = X^3-X+9 \in \Z[X]}$. Then $\mathcal{C}$ is an elliptic curve and ${I := \left\langle X, Y-3 \right\rangle}$ is an ideal of $\faktor{\Z[X,Y]}{\left\langle Y^2-X^3+X-9 \right\rangle}$. This ideal $I$ is not principal. For which ${n \in \Z}$ can we say that the specialized ideal ${I_n := \left\langle n, Y-3 \right\rangle}$ is not principal in $\faktor{\Z[Y]}{\left\langle Y^2-n^3+n-9 \right\rangle}$\,?
	
	This problem can be restated in terms of quadratic forms, using Proposition~\ref{propbij}. For which ${n \in \Z}$ can we say that the specialized integral quadratic form ${q(n) := [n,6,-n^2+1]}$ of discriminant ${4n^3-4n+36}$ is not $\gltw$-equivalent to the principal form ${q_0(n) := [1,0,-n^3+n-9]}$\,?
	
	The theory of reduced quadratic forms gives algorithms to compute whether a given quadratic form (of nonsquare discriminant) is equivalent to the principal form or not (see \textit{e.g.} \cite[\S 5.4.2, 5.6.1]{Cohen}). Using them, we look for simple patterns in the distribution of non-trivial specializations. Figure~\ref{sieve} summarizes the data obtained for integers $n$ from $1$ to $100$ (all corresponding to the case of positive discriminant): a cell in the $i^{\text{th}}$-row and $j^{\text{th}}$-column corresponds to the integer ${n = j + 5i}$ for ${i = 0,\ldots,19}$ and ${j = 1,\ldots,5}$. The cell matching to the integer $n$ is red and hatched when the corresponding quadratic form ${q(n) = [n,6,-n^2+1]}$ is equivalent to the principal form $q_0(n)$. It is yellow when $q(n)$ is not equivalent to $q_0(n)$. It is blue and gridded when $f(n)$ is a square, in which case the quadratic algebra $\faktor{\Z[Y]}{\left\langle Y^2 - f(n) \right\rangle}$ is degenerate.
	
	\begin{figure}[H]
		\centering
		\includegraphics[scale=0.5]{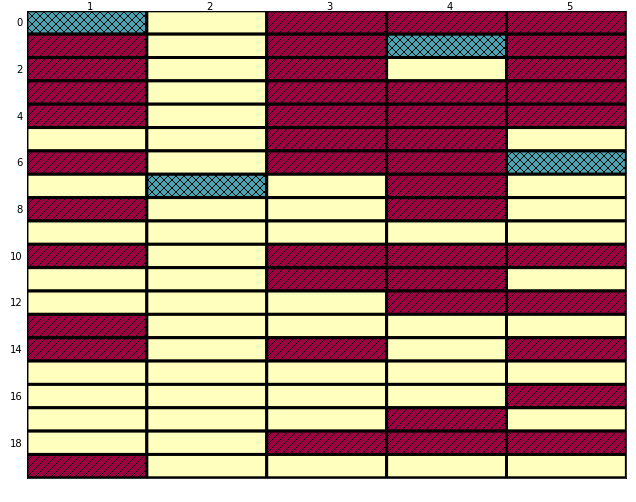}
		\caption{Sieve of non-trivial specializations of $q(n)$ for ${n \in \llbracket 1,100 \rrbracket}$ \label{sieve}}
	\end{figure}
	
	We observe that the second column in Figure~\ref{sieve} does not contain any red cell. This leads us to conjecture that when ${n \equiv 2 \pmod{5}}$, then $q(n)$ is never equivalent to $q_0(n)$. Let us show this: when ${n \equiv 2 \pmod{5}}$, we have ${f(n) \equiv 2^3-2+9 \equiv 0 \pmod{5}}$, ${q(n) \equiv [2,1,-3] \pmod{5}}$, and ${q_0(n) \equiv [1,0,0] \pmod{5}}$. For a contradiction, if $q_0(n)$ were equivalent to $q(n)$, there would exist ${x,y \in \Z}$ such that ${x^2 - f(n)y^2 = \pm n}$. This implies that ${x^2 \equiv \pm 2 \pmod{5}}$, which is impossible.
	
	Thus, in this example, we have found an infinite family of non-trivial specializations of $q$, namely all the ${n \in \Z}$ such that ${n \equiv 2 \pmod{5}}$. Actually, there are other modular criteria: if ${n \equiv 2 \pmod{12}}$, or if ${n \equiv 32 \pmod{37}}$, for example, then again $q(n)$ is not equivalent to $q_0(n)$.
\end{ex}

Our goal is to prove that such congruence classes exist in general. As stated in Theorem~\ref{theocritmod}, Genus Theory gives a way to produce such classes. Although this process is constructive, and based on the same arguments as in Example~\ref{exsieve}, it relies on certain properties of the rings of integers of the fields generated by the roots of $f$. Providing effective congruence classes would certainly require some additional work.

Let us come back to the general case, over a number field $\K$. For technical purposes, we are led to invert a suitable set of places. For $\mathcal{S}$ a finite set of nonzero prime ideals of $\ok$, we consider the ring of $\mathcal{S}$-integers
$$\oks = \set{x \in \K \st \nu_{\mathfrak{p}}(x) \geq 0 ~\forall \mathfrak{p} \not\in \mathcal{S}},$$
where $\nu_{\mathfrak{p}}(x)$ is the $\mathfrak{p}$-adic valuation of $x$. Then, for a choice of $\mathcal{S}$ that we will make precise soon, we modify the problem as follows: given a non-trivial ideal class $I$ in $\pic\left(\faktor{\oks[X,Y]}{\left\langle Y^2-f(X) \right\rangle}\right)$, we look for ${n \in \oks}$ such that $I$ is non-trivial in $\pic\left(\faktor{\oks[Y]}{\left\langle Y^2-f(n)\right\rangle}\right)$ after specialization at ${X = n}$. 

The following Proposition tells us what set $\mathcal{S}$ we should consider in order to take advantage of Genus Theory over $\K[X]$.

\begin{prop} \label{propoksisok}
	Let ${f \in \ok[X]}$ be a square-free monic polynomial of odd degree at least $3$ and let $\mathcal{S}$ be a finite set of nonzero prime ideals of $\ok$. The restriction to the generic fibre induces a homomorphism of abelian groups
	$$\theta \colon \pic\left(\faktor{\oks[X,Y]}{\left\langle Y^2-f(X) \right\rangle}\right) \longrightarrow \pic\left(\faktor{\K[X,Y]}{\left\langle Y^2-f(X) \right\rangle}\right).$$
	Moreover,
	\begin{enumerate}
		\item[$\bullet$] if $\mathcal{S}$ contains all the prime ideals dividing $2\disc(f)$, then $\theta$ is surjective;
		\item[$\bullet$] if $\mathcal{S}$ is such that $\oks$ is a PID, then $\theta$ is injective. 
	\end{enumerate}
\end{prop}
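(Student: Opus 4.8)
The plan is to work geometrically with divisor class groups rather than with quadratic forms directly, since the ring $\oks[X]$ is not a PID and the parity/$2$-torsion subtleties of Proposition~\ref{propbij} would otherwise intervene. Write $A := \faktor{\oks[X,Y]}{\left\langle Y^2-f(X)\right\rangle}$ and $B := \faktor{\K[X,Y]}{\left\langle Y^2-f(X)\right\rangle}$, and note that $B = A \otimes_{\oks} \K = S^{-1}A$ where $S := \oks \setminus \set{0}$; thus $\spec(B)$ is the generic fibre of $\spec(A) \to \spec(\oks)$. The map $\theta$ is the flat pullback of invertible sheaves along this localization, so it is automatically a group homomorphism, which settles the first assertion.

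Before treating surjectivity and injectivity I would establish the geometry of $A$. Since $A$ is a hypersurface in the regular ring $\oks[X,Y]$, it is a complete intersection, hence Cohen--Macaulay and in particular satisfies $S_2$. For $R_1$, I would run the Jacobian criterion fibrewise: over a prime $\mathfrak{p}$ of $\oks$ with $\mathfrak{p}\nmid 2\disc(f)$ the fibre $Y^2=\bar f(X)$ is smooth over $\kappa(\mathfrak{p})$, while over the finitely many $\mathfrak{p}\mid 2\disc(f)$ the singular points are cut out by the extra condition $\bar f'(X)=0$ (together with $Y=0$ when $\charac\kappa(\mathfrak{p})\neq 2$), which---because $f$ is monic of odd degree, so $\bar f'\neq 0$ as a polynomial---define only finitely many closed points in each such fibre. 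Closed points of the two-dimensional ring $A$ have codimension $2$, so the singular locus has codimension $\geq 2$ and Serre's criterion shows that $A$ is normal for every choice of $\mathcal{S}$. I record the consequence $\pic(A)\hookrightarrow \cl(A)$, and that each special fibre $\faktor{A}{\mathfrak{p}A}=\faktor{\kappa(\mathfrak{p})[X,Y]}{\left\langle Y^2-\bar f\right\rangle}$ is a domain (as $Y^2-\bar f$ is irreducible, $\bar f$ having odd degree), so that $\mathfrak{p}A$ is a height-$1$ prime of $A$.

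For surjectivity, under the hypothesis $\mathcal{S}\supseteq\set{\mathfrak{p}\colon \mathfrak{p}\mid 2\disc(f)}$ the analysis above shows that every fibre of the flat morphism $\spec(A)\to\spec(\oks)$ is regular; since the base is regular, the total space $A$ is regular, whence $\pic(A)=\cl(A)$. As $B$ is the coordinate ring of a smooth affine curve over $\K$ we also have $\pic(B)=\cl(B)$. Restriction of Weil divisor classes to a localization is always surjective (a height-$1$ prime of $B$ contracts to a height-$1$ prime of $A$ mapping onto it), so $\cl(A)\twoheadrightarrow\cl(B)$ and therefore $\theta$ is surjective.

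For injectivity I would invoke Nagata's localization sequence for the class group of the normal domain $A$: the kernel of $\cl(A)\to\cl(B)=\cl(S^{-1}A)$ is generated by the classes $[\mathfrak{q}]$ of the height-$1$ primes $\mathfrak{q}$ of $A$ meeting $S$. Each such $\mathfrak{q}$ lies over a nonzero prime $\mathfrak{p}$ of $\oks$ and, by the previous paragraph, must equal $\mathfrak{p}A$. When $\oks$ is a PID every such $\mathfrak{p}=(\varpi)$ is principal, so $\mathfrak{p}A=\varpi A$ is a principal ideal and $[\mathfrak{p}A]=0$ in $\cl(A)$. Hence $\cl(A)\to\cl(B)$ is injective, and composing with $\pic(A)\hookrightarrow\cl(A)$ shows $\theta$ is injective. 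The main obstacle is the fibral singularity analysis underpinning normality and regularity---in particular controlling the residue characteristic $2$ fibres---together with the correct identification of the kernel of the localization map with the (principal) special-fibre divisors; once these geometric inputs are in place, both claims follow formally.
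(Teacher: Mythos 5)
Your proof is correct and follows essentially the same route as the paper's, which itself defers the key steps to the first part of the proof of \cite[Lemma~2.4]{Jean}: surjectivity via smoothness of the model over primes away from $2\disc(f)$ (so that Weil and Cartier divisors coincide and scheme-theoretic closure provides preimages), and injectivity via the fact that a divisor class killed on the generic fibre is vertical, that the special fibres are irreducible because $f$, being monic of odd degree, is never a square modulo a prime, and that these fibres are principal when $\oks$ is a PID. Your packaging through Serre's criterion (normality of the model for \emph{every} $\mathcal{S}$, giving $\pic \hookrightarrow \cl$) and Nagata's localization sequence is a more self-contained version of the same argument, and in fact makes explicit a point the paper leaves implicit, namely that normality is what legitimizes passing between Cartier classes and Weil-divisor decompositions into vertical components.
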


\begin{proof}
	The map $\theta$ corresponds to the restriction of a given divisor on ${\ws = \spec\left(\faktor{\oks[X,Y]}{\left\langle Y^2-f(X) \right\rangle}\right)}$ to the generic fibre. The main steps of the proof are extracted from the first part of the proof of \cite[Lemma~2.4]{Jean}, which is stated over $\Z$, but directly extends to $\oks$.
	
	Given a divisor on the generic fibre, its scheme-theoretic closure on the integral model $\ws$ gives a Weil divisor on $\spec(\ws)$. When the prime ideals dividing $2\disc(f)$ are inverted, the Jacobian criterion shows that the affine $\spec(\oks)$-scheme $\ws$ is smooth over $\oks$, hence Weil and Cartier divisors coincide on $\ws$. This proves surjectivity of $\theta$ in that case.
	
	For injectivity, let ${D \in \ker(\theta)}$. Then ${\theta(D) = \dv_{\K}(h)}$ for some ${h \in \faktor{\K[X,Y]}{\left\langle Y^2-f(X) \right\rangle}}$. Since $\ws$ and ${\mathcal{C} \setminus \set{\infty}}$ have the same function field, we can consider $\dv(h)$ as a principal divisor over $\ws$. Thus, we see that $D$ and $\dv(h)$ have the same generic fibre, hence ${D - \dv(h)}$ is a vertical divisor. On the other hand, since $f$ is monic of odd degree, $f$ cannot be a square modulo any prime ideal of $\oks$, implying that the fibres of $\ws$ are irreducible. Therefore, vertical divisors on $\ws$ are sum of fibres. When the prime ideals dividing a chosen set of generators of $\pic(\oks)$ are inverted, $\oks$ is a PID and the fibres of $\ws$ are principal. If this happens, $D$ is principal, and $\theta$ is injective.
\end{proof}

Now, \textbf{we fix once and for all a finite set $\mathcal{S}$ of nonzero prime ideals of $\ok$, such that $\oks$ is a PID}.
Then the homomorphism $\theta$ from Proposition~\ref{propoksisok} is injective.


\begin{rem}\label{choice}
	In particular, we can choose $\mathcal{S}$ such that all the prime ideals dividing $2\disc(f)$ belong to $\mathcal{S}$. In that case, $\mathcal{S}$ contains all the primes of bad reduction for the curve $\mathcal{C}$. Furthermore, the homomorphism $\theta$ from Proposition~\ref{propoksisok} is then an isomorphism, and the link with Genus Theory over $\K[X]$ is strengthened. This is the kind of set to consider while addressing Agboola and Pappas' question, mentioned in the end of the Introduction. 
\end{rem}

\begin{rem}
	When the homomorphim $\theta$ in Proposition~\ref{propoksisok} is an isomorphism, Sivertsen and Soleng gave an algorithm to compute effectively its inverse (\cite[Lemma~3.2]{SivSol}).
\end{rem}

Since $\oks$ is a PID, every locally free $\oks[X]$-module of finite rank is free, according to \cite{Seshadri}. Therefore, every ideal class in $\pic\left(\faktor{\oks[X,Y]}{\left\langle Y^2-f(X) \right\rangle}\right)$ has a representative of the form ${\left\langle A(X), Y - B(X) \right\rangle}$ with ${A \neq 0}$, and corresponds to the class of the quadratic form ${q := [A(X),2B(X),\frac{B(X)^2-f(X)}{A(X)}]}$, by Proposition~\ref{propbij}. Thus, we deduce the quadratic form version of Proposition~\ref{propoksisok}.

\begin{cor} \label{coroksisokqf}
	Let ${f \in \ok[X]}$ be a square-free monic polynomial of odd degree at least $3$. When $\mathcal{S}$ is a finite set of nonzero prime ideals of $\ok$ such that $\oks$ is a PID, the injective homomorphism from Proposition~\ref{propoksisok} induces an injective homomorphism $$\cloksx \hookrightarrow \clkx.$$
\end{cor}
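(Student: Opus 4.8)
The plan is to obtain the map purely formally, as the transport of the injective homomorphism $\theta$ of Proposition~\ref{propoksisok} across the two ideal-class/quadratic-form bijections of Proposition~\ref{propbij}. Write $\Phi_R$ for the bijection of Proposition~\ref{propbij} attached to a ring $R$ and the discriminant $\Delta = 4f$, so that (taking $\pi = 0$) it identifies $\cltw_R(4f)$ with $\pic\left(\faktor{R[Y]}{\langle Y^2 - f\rangle}\right)$. The desired arrow will be the composite $\Phi_{\K[X]}^{-1} \circ \theta \circ \Phi_{\oks[X]}$, and everything reduces to checking that $\Phi_{\oks[X]}$ and $\Phi_{\K[X]}$ are available and are group isomorphisms.

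First I would verify the hypotheses of Proposition~\ref{propbij} for both rings. For $\K[X]$ this is immediate: it is a PID of characteristic $0$ with $2$ invertible, so finite locally free modules are free and the parity condition holds. For $\oks[X]$ the only subtle point is the uniqueness of the square root of $4f$ modulo $4\oks[X]$, since $\oks[X]$ is neither a PID, nor has $2$ invertible, nor necessarily $\langle 2\rangle$ prime, so the remark following Proposition~\ref{propbij} does not apply verbatim. Instead I would show that $x^2 \equiv 0 \pmod{4\oks[X]}$ forces $x \equiv 0 \pmod{2\oks[X]}$, which singles out $\pi = 0$ as the unique admissible lift. To do so, localize at each prime $\mathfrak{p}$ of the Dedekind domain $\oks$ dividing $2$ and extend $\nu_{\mathfrak{p}}$ to $(\oks)_{\mathfrak{p}}[X]$ as the minimum of the valuations of the coefficients; this content valuation is multiplicative by Gauss's lemma, so $\nu_{\mathfrak{p}}(x^2) = 2\nu_{\mathfrak{p}}(x)$, and the condition $\nu_{\mathfrak{p}}(x^2) \geq \nu_{\mathfrak{p}}(4) = 2\nu_{\mathfrak{p}}(2)$ rearranges to $x \in 2(\oks)_{\mathfrak{p}}[X]$. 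Intersecting over all $\mathfrak{p}$ yields $x \in 2\oks[X]$. Since $\oks$ is a PID, finite locally free $\oks[X]$-modules are free by \cite{Seshadri}, so Proposition~\ref{propbij} indeed supplies $\Phi_{\oks[X]}$.

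It remains to assemble the pieces. The bijections $\Phi_R$ carry the composition law $\ast$ to multiplication of ideal classes, hence are group isomorphisms; as $\theta$ is a group homomorphism, the composite $\Phi_{\K[X]}^{-1} \circ \theta \circ \Phi_{\oks[X]}$ is one as well, and it is injective precisely because $\theta$ is injective (here I use the standing hypothesis that $\oks$ is a PID). Tracing a class through the construction identifies the map concretely: it sends the class of $[A, 2B, \frac{B^2 - f}{A}]$ over $\oks[X]$ to the class of the same form over $\K[X]$, which remains primitive because primitivity is preserved under the base change $\oks[X] \hookrightarrow \K[X]$. I expect no genuine obstacle here, since the corollary is a transport of structure; the only step requiring real work is the uniqueness-of-square-root verification for $\oks[X]$, which the localization argument above settles cleanly.
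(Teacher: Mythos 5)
Your proposal is correct and takes essentially the same route as the paper: the paper also obtains the corollary by transporting the injective homomorphism $\theta$ of Proposition~\ref{propoksisok} across the bijections of Proposition~\ref{propbij}, invoking Seshadri's theorem (valid since $\oks$ is a PID) to guarantee that locally free $\oks[X]$-modules of finite rank are free, so that the proposition applies to $\oks[X]$. Your localization/Gauss-content argument for the uniqueness of the square root of $4f$ modulo $4\oks[X]$ is a worthwhile supplement rather than a deviation, since the paper discharges this parity hypothesis only via the remark following Proposition~\ref{propbij}, whose listed sufficient conditions (PID, $2$ a unit, or $\left\langle 2 \right\rangle$ prime) do not literally apply to $\oks[X]$ when $2$ splits or ramifies in $\K$ and no primes above $2$ lie in $\mathcal{S}$.
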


From now on we will mainly work with quadratic forms.

Let ${n \in \oks}$. Given an ideal class in $\pic\left(\faktor{\oks[X,Y]}{\left\langle Y^2-f(X) \right\rangle}\right)$, that is, given a class of quadratic forms in $\cloksx$, we obtain a class of quadratic forms in $\cloks$ by applying the evaluation homomorphism
\begin{equation} \label{eval}
ev_n \colon \cloksx \longrightarrow \cloks
\end{equation}
which sends the class of $[A,2B,C]$ to the class of ${[A(n),2B(n),C(n)]}$.

\begin{rem}
	When $f(n)$ is not a square, the target of (\ref{eval}) is the Picard group of $\oks\left[\sqrt{f(n)}\right]$. But when $f(n)$ is a square, possibly $0$, the quadratic algebra one obtains is no longer an integral domain.
\end{rem}

\subsection{Genus Theory gives a modular criterion}

In the following, $f$ still denotes a square-free monic polynomial of odd degree at least $3$ with coefficients in $\ok$, and we let $\mathcal{S}$ be a finite set of nonzero prime ideals of $\ok$ such that $\oks$ is a PID. Recall that the Genus Theory presented in Section~\ref{secgentheory} requires to work over a principal ideal domain. Thus, we will often use the injective homomorphism from Corollary~\ref{coroksisokqf} which makes a class of quadratic forms in $\cloksx$ correspond to a class of quadratic forms in $\clkx$. On the other hand, once a given class of quadratic forms ${q \in \cloksx}$ is evaluated at some ${n \in \oks}$, we get a class of quadratic forms over $\oks$ which is a PID, hence Genus Theory directly applies.

Let ${L := \faktor{\K[X]}{\left\langle f(X) \right\rangle}}$. Then, for all ${n \in \oks}$ such that ${f(n) \neq 0}$, we have the following landscape, where ${M_n := \faktor{\oks}{f(n)\oks}}$~:
\begin{equation} \label{diaggenuseval}
\begin{tikzcd}
& \faktor{\cloksx}{\cloksx^{\square}} \arrow[ld,hookrightarrow] \arrow[dr, "\overline{ev_n}"] & \\
\faktor{\clkx}{\clkx^{\square}} \arrow[d, hookrightarrow, "\Psi"] & & \faktor{\cloks}{\cloks^{\square}} \arrow[d, "\psi_n"] \\
\faktor{L^{\times}}{\K^{\times}L^{\times \square}} & & \faktor{M_n^{\times}}{\oks^{\times}M_n^{\times \square}}
\end{tikzcd}
\end{equation}
The two vertical maps are the homomorphisms corresponding to Genus Theory. The left one, $\Psi$, has already been defined in (\ref{mappsi}), and is injective by Theorem~\ref{inj}, whereas $\psi_n$ is obtained when we set ${R := \oks}$ and ${\Delta := 4f(n)}$ in Theorem~\ref{groupmorphism}. Beside this, the map $\overline{ev_n}$ is the one induced on the quotients by $ev_n$, defined in (\ref{eval}).

Given a class $q$ of quadratic forms over $\oks[X]$ which is not in the principal genus, Diagram~(\ref{diaggenuseval}) outlines what will be our strategy to find some ${n \in \oks}$ such that ${ev_n(q) \in \cloks}$ is non-trivial. We will use the map $\Psi$ to get some information, namely the fact that certain quantities are not squares (see Corollary~\ref{rootnotsquareint}). Linking those quantities to ${\psi_n \circ \overline{ev_n}(q)}$ will give us clues about what ${n \in \oks}$ we should choose (see PGS Theorem~\ref{theocritmod}).

\begin{rem}
	Our approach enables us to do a little bit more than finding non-trivial specializations of a given class of quadratic forms. Indeed, since the genus maps are group homomorphisms, we can directly extend our considerations to the question of finding non-equivalent specializations in $\cloks$ of two given distinct classes of quadratic forms ${q,q' \in \cloksx}$. When $q$ and $q'$ are not in the same genus over $\K[X]$, that is, when they do not have the same image through $\Psi$, the arguments of this paper apply to ${q' \ast q^{-1}}$, implying that their specializations are non-equivalent.
\end{rem}

The following result is an integral version of Lemma~\ref{coprime}.

\begin{lem} \label{intrep}
	Every class of quadratic forms in $\cloksx$ has a representative of the form $[A,2B,C]$ where $A$ is coprime to $f$ in $\K[X]$.
\end{lem}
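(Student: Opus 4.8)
The plan is to mimic the proof of Lemma~\ref{coprime}, but to circumvent the fact that $\oks[X]$ is not a principal ideal domain. Write $q = [A_0, 2B_0, C_0]$ with $A_0, B_0, C_0 \in \oks[X]$ and $q$ primitive. Over a PID the argument of Lemma~\ref{coprime} combines local non-vanishing data by the Chinese Remainder Theorem and then divides by a gcd to produce a coprime pair that completes to $\SL_2$; neither step survives over $\oks[X]$, since the monic irreducible factors of $f$ need not be comaximal in $\oks[X]$, and a pair with no common factor need not generate the unit ideal. To avoid both difficulties I will act by a matrix whose first column is $(\alpha, 1)^{T}$ for a suitable \emph{constant} $\alpha \in \oks$; such a column automatically generates the unit ideal, so it completes to a matrix of $\SL_2(\oks[X])$.

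Next I produce the constant $\alpha$. Primitivity of $q$ over $\oks[X]$ gives $\langle A_0, 2B_0, C_0\rangle = \oks[X]$, hence $\gcd_{\K[X]}(A_0, 2B_0, C_0) = 1$ after extending scalars to $\K[X]$, so $A_0, B_0, C_0$ have no common root in $\overline{\K}$. Thus for each of the finitely many roots $\xi \in \overline{\K}$ of $f$, the polynomial $t \mapsto A_0(\xi)t^2 + 2B_0(\xi)t + C_0(\xi)$ is nonzero and has at most two roots. The union over all roots of $f$ of these bad values is a finite subset of $\overline{\K}$; since $\oks$ is infinite, I may choose $\alpha \in \oks$ avoiding it. Setting $A := q(\alpha, 1) = A_0\alpha^2 + 2B_0\alpha + C_0 \in \oks[X]$, we get $A(\xi) \neq 0$ for every root $\xi$ of $f$, i.e.\ $\gcd(A, f) = 1$ in $\K[X]$; in particular $A \neq 0$.

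Finally, since $1 \in \langle \alpha, 1\rangle$, the column $(\alpha, 1)^{T}$ is the first column of a matrix $M \in \SL_2(\oks[X])$ (for instance $M = \left(\begin{smallmatrix}\alpha & -1\\ 1 & 0\end{smallmatrix}\right)$, with $\det M = 1$). By Remark~\ref{remcompute} the form $M \cdot q$ has first coefficient $\tfrac{1}{\det M}q(\alpha, 1) = A$, and its middle coefficient equals $\tfrac{1}{\det M}\big(2B_0(\alpha\delta+\beta\gamma) + 2(A_0\alpha\beta + C_0\gamma\delta)\big)$, which is divisible by $2$ because $\det M$ is a unit; write it as $2B$ with $B \in \oks[X]$. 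Hence $M \cdot q = [A, 2B, C]$ is an $\oks[X]$-representative of $q$ with $A$ coprime to $f$ in $\K[X]$, as required. The only real obstacle is the loss of the PID structure underlying Lemma~\ref{coprime}, and the device of using the pair $(\alpha, 1)$ with $\alpha$ a constant is exactly what sidesteps it.
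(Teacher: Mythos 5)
Your proposal is correct and follows essentially the same route as the paper's own proof: choosing a constant $\alpha \in \oks$ avoiding the at most $2\deg(f)$ roots of the nonzero quadratics $t \mapsto A_0(\rho)t^2+2B_0(\rho)t+C_0(\rho)$ at the roots $\rho$ of $f$, and acting by the matrix $\left(\begin{smallmatrix}\alpha & -1\\ 1 & 0\end{smallmatrix}\right) \in \SL_2(\oks[X])$. The paper's proof is the same argument, down to the choice of matrix.
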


\begin{proof}
	Let ${q = [A_0,2B_0,C_0] \in \cloksx}$. By Remark~\ref{remcompute}, every quadratic form equivalent to $q$ has its first coefficient of the form ${\varepsilon q(\alpha,\gamma) = \varepsilon(A_0\alpha^2 + 2B_0\alpha\gamma + C_0\gamma^2)}$ for some ${\varepsilon \in \oks^{\times}}$ and some ${\alpha,\gamma \in \oks[X]}$ such that ${\left\langle \alpha, \gamma \right\rangle = \oks[X]}$. Actually, restricting our search to ${\varepsilon = 1}$, ${\gamma = 1}$ and ${\alpha \in \oks}$ will be enough for our purpose.
	
	Let $\K(f)$ be the splitting field of $f$. For all ${\alpha \in \oks}$, the polynomial ${q(\alpha,1)}$ is coprime to $f$ in $\K[X]$ if and only if we have ${q(\alpha,1)(\rho) \neq 0}$ for all $\rho$ roots of $f$ in $\K(f)$. In other words, we look for ${\alpha \in \oks}$ such that
	$$P(\rho) := A_0(\rho)\alpha^2 + 2B_0(\rho)\alpha + C_0(\rho) \neq 0 ~~~~ \text{~for~all~} \rho \text{~root~of~} f.$$
	Fix some root $\rho$ of $f$. Since the quadratic form $[A_0,2B_0,C_0]$ is primitive, we cannot have ${A_0(\rho)=B_0(\rho)=C_0(\rho)=0}$ at the same time, hence $P(\rho)$ is a nonzero polynomial in the variable $\alpha$, and has at most $2$ roots in $\oks$. Doing this for all $\rho$, we have at most $2\deg(f)$ values of $\alpha$ to avoid. Since $\K$ is infinite, we can take $\alpha \in \oks$ not in the set of those values. Then the quadratic form ${[A,2B,C] := \begin{pmatrix} \alpha & -1 \\ 1 & 0 \end{pmatrix} \cdot [A_0,2B_0,C_0]}$ is another representative of our class $q$, with $A$ coprime to $f$ in $\K[X]$.
\end{proof}

Given a class of quadratic forms ${[A,2B,C] \in \cloksx}$ whose image through $\Psi$ is non-trivial, we now analyse the consequences on $A$. We start with the case of quadratic forms over $\K[X]$.

For the sake of reader-friendlyness, uppercase letters are used for quadratic forms $[A,2B,C]$ with coefficients in $\oks[X]$, whereas lowercase letters are used for quadratic forms $[a,2b,c]$ with coefficients in $\K[X]$.

\begin{prop} \label{rootnotsquare}
	If ${q \in \clkx \setminus \clkx^{\square}}$, and if $[a,2b,c]$ is a representative of $q$ with $a$ coprime to $f$, then for all ${\varepsilon \in \K^{\times}}$, there exists a root $\rho_{\varepsilon}$ of $f$ such that ${\varepsilon a(\rho_{\varepsilon}) \notin \K(\rho_{\varepsilon})^{\times \square}}$.
\end{prop}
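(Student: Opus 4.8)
The plan is to read off the conclusion directly from the injectivity of the Genus map $\Psi$ established in Theorem~\ref{inj}, after unwinding what membership in $\K^{\times}L^{\times\square}$ means componentwise via the Chinese Remainder Theorem. First I would observe that since $q \notin \clkx^{\square}$, its class in $\faktor{\clkx}{\clkx^{\square}}$ is nontrivial, so by injectivity of $\Psi$ we get $\Psi(q) \neq 1$. By the explicit formula~\eqref{mappsi}, $\Psi(q) = a^{-1}\K^{\times}L^{\times\square}$, and since $\K^{\times}L^{\times\square}$ is a subgroup of $L^{\times}$ this is equivalent to $a \notin \K^{\times}L^{\times\square}$. (Note that $a$ is a unit in $L$ precisely because it is coprime to $f$, so $a(\rho) \neq 0$ at every root $\rho$.) This reformulates the hypothesis as: for every $\varepsilon \in \K^{\times}$, the element $\varepsilon a$ is \emph{not} a square in $L^{\times}$.

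The next step is to decompose $L$ into a product of fields. Writing $f = \prod_j f_j$ as a product of distinct monic irreducible factors over $\K$ (distinct because $f$ is square-free), the Chinese Remainder Theorem gives $L \cong \prod_j \K[X]/\langle f_j\rangle$, and upon fixing a root $\rho_j$ of $f_j$ we identify each factor with the field $\K(\rho_j)$ via $X \mapsto \rho_j$. Under this identification the image of $a$ in the $j$-th factor is exactly $a(\rho_j)$, and the image of a scalar $\varepsilon \in \K^{\times}$ is $\varepsilon$ in every factor. The key elementary fact is that in a finite product of fields an element is a square if and only if each of its components is a square; hence $L^{\times\square} = \prod_j \K(\rho_j)^{\times\square}$.

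Combining the two steps finishes the argument. Fix $\varepsilon \in \K^{\times}$. Since $\varepsilon a$ is not a square in $L$, its image $(\varepsilon a(\rho_j))_j$ cannot be a square in the product of fields, so by the componentwise criterion there is at least one index $j$ for which $\varepsilon a(\rho_j) \notin \K(\rho_j)^{\times\square}$. Taking $\rho_{\varepsilon} := \rho_j$, a root of $f$, yields exactly the desired conclusion, and since the reasoning holds for each $\varepsilon$ separately the proof is complete. I do not anticipate a serious obstacle here: the only point requiring care is the bookkeeping of the Chinese Remainder isomorphism, namely identifying squares in $L$ with tuples of squares in the residue fields and checking that being a square at a root $\rho$ is a well-defined, Galois-invariant notion, so that the statement ``for some root $\rho$ of $f$'' is unambiguous across conjugate roots.
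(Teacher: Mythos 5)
Your proof is correct and follows essentially the same route as the paper: deduce $a \notin \K^{\times}L^{\times\square}$ from the injectivity of $\Psi$ (Theorem~\ref{inj}) together with the formula $\Psi(q)=a^{-1}\K^{\times}L^{\times\square}$, then split $L \simeq \prod_i \K(\rho_i)$ by the Chinese Remainder Theorem and use the fact that squares in a product of fields are detected componentwise. The only cosmetic difference is that the paper phrases the last step by contraposition while you argue directly, which is logically the same.
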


\begin{proof}
	Let ${L := \faktor{\K[X]}{\left\langle f(X) \right\rangle}}$, and let
	$$\Psi \colon \faktor{\clkx}{\clkx^{\square}} \longrightarrow \faktor{L^{\times}}{\K^{\times}L^{\times \square}}$$
	be the Genus Theory homomorphism over $\K[X]$ defined in (\ref{mappsi}). Since $a$ is coprime to $f$, we have ${\Psi(q) = a^{-1}\K^{\times}L^{\times \square} = a\K^{\times}L^{\times \square}}$ by Proposition~\ref{coset}. On the other hand, $q$ is not a square, and $\Psi$ is injective by Theorem~\ref{inj}, hence ${a \notin \K^{\times}L^{\times \square}}$.
	
	Decompose ${f = \underset{i=1}{\overset{r}{\prod}} f_i}$ as a product of irreducible polynomials in $\oksx$. Recall that $f$ is assumed to be square-free, hence the $f_i$'s are all distinct. For all $i$, denote by $\rho_i$ a root of $f_i$. We then have
	$$L = \faktor{\K[X]}{\left\langle f(X) \right\rangle} \simeq \underset{i=1}{\overset{r}{\prod}} \faktor{\K[X]}{\left\langle f_i(X) \right\rangle} \simeq \underset{i=1}{\overset{r}{\prod}} \K(\rho_i).$$
	The image of ${a \in \K[X]}$ through these isomorphisms is ${(a(\rho_1),\ldots,a(\rho_r))}$.
	
	By contraposition, if there exists some ${\varepsilon \in \K^{\times}}$ such that $\varepsilon a(\rho_i)$ is a square in $\K(\rho_i)$ for all $i$, then $\varepsilon a$ is a square in $L$. Recall that $a$ is coprime to $f$, hence ${\varepsilon a(\rho_i) \neq 0}$ for all $i$, so ${\varepsilon a \in L^{\times \square}}$, and ${a \in \K^{\times}L^{\times \square}}$. Thus, $q$ must be a square in $\clkx$, and this concludes the proof.
\end{proof}

\begin{cor} \label{rootnotsquareint}
	Let ${q \in \cloksx}$. Let $[A,2B,C]$ be a representative of $q$ with $A$ coprime to $f$ in $\K[X]$ (see Lemma~\ref{intrep}). If $q$ is not a square in $\clkx$, then for all ${\varepsilon \in \oks^{\times}}$, there exists a root $\rho_{\varepsilon}$ of $f$ such that ${\varepsilon A(\rho_{\varepsilon})}$ is not a square in ${\rints{\K(\rho_{\varepsilon})}{\mathcal{S}_{\varepsilon}}}$ (where $\mathcal{S}_{\varepsilon}$ denotes the set of prime ideals above $\mathcal{S}$ in $\K(\rho_{\varepsilon})$).
\end{cor}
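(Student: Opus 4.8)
The plan is to deduce Corollary~\ref{rootnotsquareint} from Proposition~\ref{rootnotsquare} by relating the squareness of $\varepsilon A(\rho_\varepsilon)$ over the full field $\K(\rho_\varepsilon)$ to its squareness in the ring of $\mathcal{S}_\varepsilon$-integers $\rints{\K(\rho_\varepsilon)}{\mathcal{S}_\varepsilon}$. Viewing $q$ as a class in $\clkx$ via the injection of Corollary~\ref{coroksisokqf}, the representative $[A,2B,C]$ with $A$ coprime to $f$ also serves as a representative over $\K[X]$ (here $A$ plays the role of the lowercase $a$ in Proposition~\ref{rootnotsquare}). First I would invoke Proposition~\ref{rootnotsquare}: since $q$ is not a square in $\clkx$, for the given $\varepsilon \in \oks^\times \subseteq \K^\times$ there exists a root $\rho_\varepsilon$ of $f$ with $\varepsilon A(\rho_\varepsilon) \notin \K(\rho_\varepsilon)^{\times\square}$.

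The remaining work is to upgrade ``not a square in the field $\K(\rho_\varepsilon)$'' to ``not a square in the subring $\rints{\K(\rho_\varepsilon)}{\mathcal{S}_\varepsilon}$''. This direction is the easy one: the ring of $\mathcal{S}_\varepsilon$-integers embeds into its fraction field $\K(\rho_\varepsilon)$, so an element that is a square in the ring is automatically a square in the field. Hence the contrapositive is immediate---if $\varepsilon A(\rho_\varepsilon)$ is not a square in $\K(\rho_\varepsilon)$, it cannot be a square in the subring $\rints{\K(\rho_\varepsilon)}{\mathcal{S}_\varepsilon}$. I would phrase this as: $\rints{\K(\rho_\varepsilon)}{\mathcal{S}_\varepsilon}^{\times\square} \subseteq \rints{\K(\rho_\varepsilon)}{\mathcal{S}_\varepsilon} \cap \K(\rho_\varepsilon)^{\times\square}$, so an element outside $\K(\rho_\varepsilon)^{\times\square}$ is a fortiori outside the image of squaring on the ring.

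The only genuine point requiring a word of care is that $A(\rho_\varepsilon)$ actually lies in $\rints{\K(\rho_\varepsilon)}{\mathcal{S}_\varepsilon}$ so that the statement about non-squareness in that ring is meaningful. Since $A \in \oks[X]$ and $\rho_\varepsilon$ is integral over $\ok$ (being a root of the monic polynomial $f \in \okx$), the value $A(\rho_\varepsilon)$ is an $\oks$-linear combination of powers of an $\mathcal{S}_\varepsilon$-integer, hence lies in $\rints{\K(\rho_\varepsilon)}{\mathcal{S}_\varepsilon}$; moreover it is nonzero because $A$ is coprime to $f$, so $\varepsilon A(\rho_\varepsilon) \neq 0$ is a genuine element whose squareness can be tested. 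I expect no real obstacle here: the entire content is supplied by Proposition~\ref{rootnotsquare}, and the passage to $\mathcal{S}_\varepsilon$-integers is a one-line containment of square subgroups. The main thing to verify carefully is simply the bookkeeping that $\varepsilon \in \oks^\times$ embeds correctly as an element of $\K^\times$ and that the chosen root $\rho_\varepsilon$ of $f$ matches up across the two statements.
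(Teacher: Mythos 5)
Your proposal is correct and follows essentially the same route as the paper: invoke Proposition~\ref{rootnotsquare} to get non-squareness of $\varepsilon A(\rho_{\varepsilon})$ in the field $\K(\rho_{\varepsilon})$, check that $\varepsilon A(\rho_{\varepsilon})$ lies in $\rints{\K(\rho_{\varepsilon})}{\mathcal{S}_{\varepsilon}}$, and then pass from the field to the ring. The only (harmless) difference is that the paper states an equivalence between squareness in the ring and in the field using that $\rints{\K(\rho_{\varepsilon})}{\mathcal{S}_{\varepsilon}}$ is integrally closed, whereas you correctly observe that for this corollary only the trivial containment (a square in the subring is a square in the field) is needed; the integral-closure direction only becomes relevant later, when the non-squareness in the ring is used to produce a genuine quadratic extension in Theorem~\ref{theocritmod}.
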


\begin{rem}
	With our choice of $\mathcal{S}$, requiring $q$ to be nonsquare in $\cloksx$ might not be enough to ensure that $q$ is not a square in $\clkx$, since the homomorphism from Corollary~\ref{coroksisokqf} is only injective. As seen in Remark~\ref{choice}, assuming that $\mathcal{S}$ also contains the prime ideals dividing $2\disc(f)$ solves this issue.
\end{rem}

\begin{proof}
	We apply Proposition~\ref{rootnotsquare}: for all ${\varepsilon \in \K^{\times}}$, there exists $\rho_{\varepsilon}$ a root of $f$ such that ${\varepsilon A(\rho_{\varepsilon}) \notin \K(\rho_{\varepsilon})^{\times\square}}$. On the other hand, for all ${\varepsilon \in \oks^{\times}}$, we have ${\varepsilon A(\rho_{\varepsilon}) \in \rints{\K(\rho_{\varepsilon})}{\mathcal{S}_{\varepsilon}}}$. Since this ring is integrally closed, $\varepsilon A(\rho_{\varepsilon})$ is a square in $\rints{\K(\rho_{\varepsilon})}{\mathcal{S}_{\varepsilon}}$ if and only if it is a square in $\K(\rho_{\varepsilon})$. We end up with the desired conclusion.
\end{proof}

The following Theorem relates Genus Theory over $\oks[X]$ to Genus Theory over $\oks$, providing a modular criterion for non-trivial specializations.

\begin{pgst} \label{theocritmod}
	Let ${f \in \ok[X]}$ be a square-free monic polynomial of odd degree at least $3$, and let $\mathcal{S}$ be a finite set of nonzero prime ideals of $\ok$ such that $\oks$ is a PID. Let ${q \in \cloksx}$, and assume that $q$ is not in the principal genus over $\K[X]$.
	
	Let $[A,2B,C]$ be a representative of $q$ with $A$ coprime to $f$ in $\K[X]$ (see Lemma~\ref{intrep}). Let ${n \in \oks}$ such that ${f(n) \neq 0}$. For all ${\varepsilon \in \faktor{\oks^{\times}}{\oks^{\times\square}}}$, let $\rho_{\varepsilon}$ be a root of $f$ such that $\varepsilon A(\rho_{\varepsilon})$ is not a square in $\rints{\K(\rho_{\varepsilon})}{\mathcal{S}_{\varepsilon}}$, as in Corollary~\ref{rootnotsquareint}.
	
	If, for all ${\varepsilon \in \faktor{\oks^{\times}}{\oks^{\times\square}}}$, there exists $\mathfrak{p}_{\varepsilon}$ a prime ideal in $\K(\rho_{\varepsilon})$ not above $\mathcal{S}$ such that $\mathfrak{p}_{\varepsilon}$ is inert in the extension ${\K(\rho_{\varepsilon}) \hookrightarrow \K\left(\rho_{\varepsilon}, \sqrt{\varepsilon A(\rho_{\varepsilon})}\right)}$ and ${n \equiv \rho_{\varepsilon} \pmod{\mathfrak{p}_{\varepsilon}}}$, then the quadratic form ${[A(n),2B(n),C(n)]}$ is not in the principal genus over $\oks$.
\end{pgst}

\begin{rem}
	There are only finitely many $\varepsilon$ to consider in Theorem~\ref{theocritmod}. Indeed, the quotient $\faktor{\oks^{\times}}{\oks^{\times\square}}$ has finite order by Dirichlet's Unit Theorem, extended to $\mathcal{S}$-integers by Chevalley and Hasse \cite[Theorem~3.12]{Nark}. More explicitly, one can write ${\oks^{\times} \simeq \mu \times \Z^r}$ where $\mu$ is the group of roots of unity, and then one has ${\faktor{\oks^{\times}}{\oks^{\times\square}} \simeq \faktor{\mu}{2\mu} \times \left(\faktor{\Z}{2\Z}\right)^r}$, which is finite.
\end{rem}

\begin{rem}
	If we further assume that the prime ideal $\mathfrak{p}_{\varepsilon}$ involved in Theorem~\ref{theocritmod} has inertia degree $1$ over $\oks$, then the congruence relation ${n \equiv \rho_{\varepsilon} \pmod{\mathfrak{p}_{\varepsilon}}}$ indeed has a solution ${n \in \oks}$.
\end{rem}

\begin{rem}
	By Chebotarev's Density Theorem, there are infinitely many prime ideals in $\rint{\K(\rho_{\varepsilon})}$ which are inert in the Galois extension $\K(\rho_{\varepsilon}) \hookrightarrow \K\left(\rho_{\varepsilon},\sqrt{\varepsilon A(\rho_{\varepsilon})}\right)$, hence removing a finite number of prime ideals ensures that there will still exist (infinitely many) inert prime ideals in $\rints{\K(\rho_{\varepsilon})}{\mathcal{S}_{\varepsilon}}$.
\end{rem}

\begin{proof}
	Let ${n \in \oks}$ be such that ${f(n) \neq 0}$, and let ${M_n := \faktor{\oks}{f(n)\oks}}$. To compute the image of ${q_n := [A(n),2B(n),C(n)] \in \cloks}$ by the Genus map $$\psi_n \colon \faktor{\cloks}{\cloks^{\square}} \longrightarrow \faktor{M_n^{\times}}{\oks^{\times}M_n^{\times \square}},$$ we would need $A(n)$ and $f(n)$ to be coprime, but this has no reason to happen. Nevertheless, we know from Lemma~\ref{coprime} and Remark~\ref{remsl2} that there exists another representative of $q_n$ whose first coefficient $A_n$ is nonzero and coprime to $f(n)$, and such that ${A_n = A(n)\alpha ^2 + 2B(n)\alpha\gamma + C(n)\gamma^2}$ for some coprime $\mathcal{S}$-integers $\alpha$ and $\gamma$. By Proposition~\ref{coset}, we have ${\psi_n(q_n) = A_n^{-1} \oks^{\times} M_n^{\times \square} = A_n \oks^{\times} M_n^{\times \square}}$, hence $q_n$ is in the principal genus if and only if ${\Big(A_n + f(n)\oks\Big) \in \oks^{\times} M_n^{\times \square}}$. This means that for all ${\varepsilon \in \faktor{\oks^{\times}}{\oks^{\times\square}}}$, we must show that ${\Big(\varepsilon A_n + f(n)\oks\Big) \notin M_n^{\times \square}}$.
	
	Fix some ${\varepsilon \in \faktor{\oks^{\times}}{\oks^{\times\square}}}$. Let $\mathfrak{p}_{\varepsilon}$ be a prime ideal in $\K(\rho_{\varepsilon})$ satisfying the hypotheses. Since ${n \equiv \rho_{\varepsilon} \pmod{\mathfrak{p}_{\varepsilon}}}$, we have ${f(n) \equiv 0 \pmod{\mathfrak{p}_{\varepsilon}}}$, that is, $\mathfrak{p}_{\varepsilon}$ divides $f(n)\rints{\K(\rho_{\varepsilon})}{\mathcal{S}_{\varepsilon}}$. This gives a ring homomorphism
	$$M_n = \faktor{\oks}{f(n)\oks} \longrightarrow \faktor{\rints{\K(\rho_{\varepsilon})}{\mathcal{S}_{\varepsilon}}}{f(n)\rints{\K(\rho_{\varepsilon})}{\mathcal{S}_{\varepsilon}}} \longrightarrow \faktor{\rints{\K(\rho_{\varepsilon})}{\mathcal{S}_{\varepsilon}}}{\mathfrak{p}_{\varepsilon}}.$$
	By contraposition, if $\varepsilon A_n$ is a square in $M_n$, then $\varepsilon A_n$ is a square in $\faktor{\rints{\K(\rho_{\varepsilon})}{\mathcal{S}_{\varepsilon}}}{\mathfrak{p}_{\varepsilon}}$. We show that the latter condition is not possible.
	
	Since $\mathfrak{p}_{\varepsilon}$ is inert in the quadratic extension ${\K(\rho_{\varepsilon}) \hookrightarrow \K\left(\rho_{\varepsilon},\sqrt{\varepsilon A(\rho_{\varepsilon})}\right)}$, the quantity $\varepsilon A(\rho_{\varepsilon})$ cannot be a square modulo $\mathfrak{p}_{\varepsilon}$. Moreover, as ${n \equiv \rho_{\varepsilon} \pmod{\mathfrak{p}_{\varepsilon}}}$, we have ${\varepsilon A(\rho_{\varepsilon}) \equiv \varepsilon A(n) \pmod{\mathfrak{p}_{\varepsilon}}}$, hence $A(n)$ is invertible modulo $\mathfrak{p}_{\varepsilon}$. Next, ${f(n) \equiv 0}$ implies ${B^2(n) \equiv A(n)C(n) \pmod{\mathfrak{p}_{\varepsilon}}}$, hence
	$$\varepsilon A_n = \varepsilon (A(n)\alpha ^2 + 2B(n)\alpha\gamma + C(n)\gamma^2) \equiv \varepsilon A(n)\left(\alpha + \frac{B(n)}{A(n)}\gamma\right)^2 \pmod{\mathfrak{p}_{\varepsilon}}.$$
	Since $A_n$ is coprime to $f(n)$, $A_n$ is invertible modulo $\mathfrak{p}_{\varepsilon}$, hence ${\alpha + \frac{B(n)}{A(n)}\gamma}$ is invertible too and ${\varepsilon A(n) \equiv \varepsilon A_n \left(\alpha + \frac{B(n)}{A(n)}\gamma\right)^{-2} \pmod{\mathfrak{p}_{\varepsilon}}}$. We deduce from that equation that $\varepsilon A_n$ is not a square modulo $\mathfrak{p}_{\varepsilon}$, hence modulo $f(n)$. We infer that ${{\Big(\varepsilon A_n + f(n)\oks\Big)} \notin M_n^{\times \square}}$ for all ${\varepsilon \in \faktor{\oks^{\times}}{\oks^{\times\square}}}$, hence ${\psi_n(q_n) \neq \oks^{\times}M_n^{\times\square}}$ and we are done.
\end{proof}

\begin{cor} \label{infty}
	With the same notations as above, let ${q \in \cloksx}$. Recall that for ${n \in \oks}$, $ev_n$ is the specialization map defined by (\ref{eval}).
	
	If $q$ is not in the principal genus over $\K[X]$, then there exist infinitely many ${n \in \oks}$ such that the specialized class of quadratic forms $ev_n(q)$ is non trivial in $\cloks$.
\end{cor}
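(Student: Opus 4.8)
The plan is to deduce the statement directly from the Principal Genus Specialization Theorem~\ref{theocritmod}: it suffices to exhibit infinitely many ${n \in \oks}$ for which the specialized class ${ev_n(q) = [A(n),2B(n),C(n)]}$ fails to lie in the principal genus over $\oks$. Indeed, the principal form class is the identity of $\cloks$ and hence lies in the principal genus (the kernel of the genus homomorphism $\psi_n$), so any class outside the principal genus is automatically non-trivial. I would fix a representative $[A,2B,C]$ of $q$ with $A$ coprime to $f$ in $\K[X]$ (Lemma~\ref{intrep}). Since $q$ is not in the principal genus over $\K[X]$, it is not a square in $\clkx$, so Corollary~\ref{rootnotsquareint} provides, for each of the finitely many classes ${\varepsilon \in \faktor{\oks^{\times}}{\oks^{\times\square}}}$, a root $\rho_\varepsilon$ of $f$ with $\varepsilon A(\rho_\varepsilon)$ a non-square in $\rints{\K(\rho_\varepsilon)}{\mathcal{S}_\varepsilon}$, equivalently a non-square in $\K(\rho_\varepsilon)$.

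The heart of the argument, and the main obstacle, is to produce for each $\varepsilon$ a prime $\mathfrak{p}_\varepsilon$ of $\K(\rho_\varepsilon)$, not above $\mathcal{S}$, that simultaneously (i) is inert in the quadratic extension ${\K(\rho_\varepsilon) \hookrightarrow \K(\rho_\varepsilon,\sqrt{\varepsilon A(\rho_\varepsilon)})}$, as required by Theorem~\ref{theocritmod}, and (ii) has residue degree $1$ over $\oks$, which is precisely what makes the congruence ${n \equiv \rho_\varepsilon \pmod{\mathfrak{p}_\varepsilon}}$ solvable in $\oks$. The two remarks following Theorem~\ref{theocritmod} address (i) and (ii) only in isolation; the point is to meet them at once. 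I would do this by Chebotarev applied to the Galois closure $M$ of $\K(\rho_\varepsilon,\sqrt{\varepsilon A(\rho_\varepsilon)})$ over $\K$: because $\varepsilon A(\rho_\varepsilon)$ is a non-square in $\K(\rho_\varepsilon)$, the subgroup $\mathrm{Gal}(M/\K(\rho_\varepsilon))$ strictly contains $\mathrm{Gal}(M/\K(\rho_\varepsilon,\sqrt{\varepsilon A(\rho_\varepsilon)}))$ with index $2$, so one may pick $\sigma$ in the former but not the latter. Since $\sigma$ fixes $\rho_\varepsilon$, the prime of $\K(\rho_\varepsilon)$ lying under a prime of $M$ with Frobenius $\sigma$ has residue degree $1$ over $\K$; since $\sigma$ moves $\sqrt{\varepsilon A(\rho_\varepsilon)}$, that prime is inert in the quadratic extension. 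Chebotarev then yields infinitely many such primes, so I can further arrange that $\mathfrak{p}_\varepsilon$ avoids $\mathcal{S}$ and that the primes ${\mathfrak{q}_\varepsilon := \mathfrak{p}_\varepsilon \cap \oks}$ are pairwise distinct as $\varepsilon$ ranges over the finite set $\faktor{\oks^{\times}}{\oks^{\times\square}}$.

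Finally, since each $\mathfrak{p}_\varepsilon$ has residue degree $1$, the reduction of $\rho_\varepsilon$ lies in $\faktor{\oks}{\mathfrak{q}_\varepsilon}$, so ``${n \equiv \rho_\varepsilon \pmod{\mathfrak{p}_\varepsilon}}$'' amounts to a single congruence ${n \equiv c_\varepsilon \pmod{\mathfrak{q}_\varepsilon}}$ with ${c_\varepsilon \in \oks}$. The $\mathfrak{q}_\varepsilon$ being distinct primes, the Chinese Remainder Theorem produces an entire residue class modulo $\prod_\varepsilon \mathfrak{q}_\varepsilon$ of solutions ${n \in \oks}$; this class is infinite, and discarding the finitely many $n$ with ${f(n)=0}$ leaves infinitely many. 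For each such $n$, all hypotheses of Theorem~\ref{theocritmod} hold, so ${ev_n(q) = [A(n),2B(n),C(n)]}$ is not in the principal genus over $\oks$, hence non-trivial in $\cloks$, which gives the claim. The only genuinely delicate step is the simultaneous Chebotarev condition of the second paragraph; everything else is bookkeeping with results already established.
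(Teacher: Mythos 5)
Your proposal is correct, and its overall skeleton coincides with the paper's: reduce via the PGS Theorem~\ref{theocritmod} and Corollary~\ref{rootnotsquareint} to producing, for each of the finitely many ${\varepsilon \in \faktor{\oks^{\times}}{\oks^{\times\square}}}$, a prime $\mathfrak{p}_{\varepsilon}$ of $\K(\rho_{\varepsilon})$ avoiding $\mathcal{S}$, inert in ${\K(\rho_{\varepsilon},\sqrt{\varepsilon A(\rho_{\varepsilon})})}$ and of residue degree $1$ over $\K$, then conclude by the Chinese Remainder Theorem and by discarding the finitely many $n$ with ${f(n)=0}$. Where you genuinely differ is in how the two conditions on $\mathfrak{p}_{\varepsilon}$ are met simultaneously. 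The paper stays entirely over $\K(\rho_{\varepsilon})$ and intersects two sets of primes: those inert in the quadratic extension (Dirichlet density $\frac{1}{2}$, by Chebotarev) and those of residue degree $1$ over $\K$ (density $1$, a standard fact), so the intersection has density at least $\frac{1}{2}$ and is infinite. You instead make a single application of Chebotarev over $\K$ to the Galois closure $M$ of ${\K(\rho_{\varepsilon},\sqrt{\varepsilon A(\rho_{\varepsilon})})}$, choosing ${\sigma \in \mathrm{Gal}(M/\K(\rho_{\varepsilon})) \setminus \mathrm{Gal}\left(M/\K(\rho_{\varepsilon},\sqrt{\varepsilon A(\rho_{\varepsilon})})\right)}$; your decomposition-group computation is correct: since $\sigma$ lies in $\mathrm{Gal}(M/\K(\rho_{\varepsilon}))$, a prime of $M$ with Frobenius $\sigma$ sits over a prime of $\K(\rho_{\varepsilon})$ of residue degree $1$ over $\K$, and since $\sigma$ moves $\sqrt{\varepsilon A(\rho_{\varepsilon})}$, that prime is inert in the quadratic extension. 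Your version trades the paper's density bookkeeping for an explicit choice of Frobenius class, which makes the simultaneity of the two conditions transparent and self-contained; the paper's version is shorter and, by exhibiting a positive-density set of usable primes in $\K(\rho_{\varepsilon})$, feeds directly into the quantitative estimate of Theorem~\ref{theodeltaleq} in the following subsection (where the divergence of ${\sum_{\mathfrak{p} \in \mathcal{P}_{\varepsilon}} N(\mathfrak{p})^{-s}}$ is what forces the density of $\mathcal{T}_{triv}$ to vanish). Both routes produce infinitely many admissible primes, hence infinitely many $n$, so your argument is a valid alternative proof.
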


\begin{proof}
	Let $\rho$ be a root of $f$ in some extension, and let $\nu$ be a non-square element of $\K(\rho)$. In view of the PGS Theorem~\ref{theocritmod} and the Chinese Remainder Theorem, and because the set $\mathcal{S}$ is finite, it is enough to show that there exist infinitely many different prime ideals $\mathfrak{p}$ of $\rint{\K(\rho)}$ satisfying:
	\begin{enumerate}
		\item[$\bullet$] $\mathfrak{p}$ is inert in $\rint{\K\left(\rho,\sqrt{\nu}\right)}$;
		\item[$\bullet$] $\mathfrak{p}$ has inertia degree $1$ over $\ok$ (this ensures that ${n \equiv \rho \pmod{\mathfrak{p}}}$ has a solution ${n \in \ok}$).
	\end{enumerate}
	The first point is guaranteed by Chebotarev's Density Theorem: the set of prime ideals of $\rint{\K(\rho)}$ inert in some (Galois) quadratic extension has Dirichlet density $\frac{1}{2}$. For the second point, it is a standard fact that the set of prime ideals of $\rint{\K(\rho)}$ having inertia degree $1$ over $\ok$ has Dirichlet density $1$ (see for example \cite[7.2.1, Corollary~3]{Nark}). Therefore, we may choose our prime ideals among a set of Dirichlet density $\frac{1}{2}$, hence there are infinitely many such ideals, as desired.
\end{proof}

\subsection{Density of non-trivial specializations in $\oks$} \label{secdens}

As in the previous Subsections, let ${f \in \ok[X]}$ be a square-free monic polynomial of odd degree at least $3$, and let $\mathcal{S}$ be a finite set of nonzero prime ideals of $\ok$ such that $\oks$ is a PID. Let ${q \in \cloksx}$, and assume that $q$ is not in the principal genus over $\K[X]$. Then the PGS Theorem~\ref{theocritmod} and its Corollary~\ref{infty} tell us that the set of $\mathcal{S}$-integers ${n \in \oks}$ such that the specialized class of quadratic forms ${ev_n(q) \in \cloks}$ is non-trivial is infinite. The present Section aims at estimating the density of this set in $\oks$. In general, there are various notions of density one can choose, depending on the problem one considers. Therefore, we give a list of properties that our density should satisfy.

\begin{defi} \label{defidensity}
	Denote by $\mathscr{P}(\oks)$ the set of subsets of $\oks$. In this paper, a \emph{density} is a map ${\delta \colon \mathscr{P}(\oks) \longrightarrow [0,1]}$ satisfying the following properties:
	\begin{enumerate}[label=(\density{d}{{\arabic*}})]
		\item $\delta(\oks) = 1$; \label{d1oks}
		\item if $P \subseteq Q$ in $\mathscr{P}(\oks)$, then $\delta(P) \leq \delta(Q)$; \label{d2incl}
		\item if $P, Q \subseteq \oks$, then $\delta(P \cup Q) \leq \delta(P) + \delta(Q)$; \label{d3union}
		\item $\delta$ is translation invariant, that is, $\delta(\alpha + P) = \delta(P)$ for all ${\alpha \in \oks}$ and ${P \in \mathscr{P}(\oks)}$;\label{d4transinv}
		\item for any $I$ ideal of $\oks$, we have ${\delta(I) = \frac{1}{N(I)}}$, where ${N(I) := \left\lvert \faktor{\oks}{I} \right\rvert = \left\lvert \faktor{\ok}{I \cap \ok} \right\rvert}$ is the norm of $I$. \label{d5ideal}
	\end{enumerate}
\end{defi}

\begin{ex}
	Let $\mathcal{S}_{\infty}$ be the set of archimedean places of $\ok$. For all ${\nu \in \mathcal{S} \cup \mathcal{S}_{\infty}}$, let $\K_{\nu}$ be the completion of $\K$ with respect to $\abs{\cdot}_{\nu}$. Then the map defined for all ${P \subseteq \oks}$ by
	$$\delta_{\mathcal{S}}(P) := \limsup_{r \in \K^{\times}} \frac{\#\set{n \in P \st \abs{n}_{\nu} \leq \abs{r}_{\nu} ~\forall \nu \in \mathcal{S} \cup \mathcal{S}_{\infty}}}{\#\set{n \in \oks \st \abs{n}_{\nu} \leq \abs{r}_{\nu} ~\forall \nu \in \mathcal{S} \cup \mathcal{S}_{\infty}}}$$
	is a density, according to \cite[Proposition~4.9 and Remark~4.10]{Longhiv6}.
\end{ex}

\paragraph{Notations.} Here, ${f \in \ok[X]}$ is a monic square-free polynomial of degree ${2g+1}$. Recall that $\faktor{\oks^{\times}}{\oks^{\times\square}}$ has finite order by Dirichlet's Unit Theorem. Let ${q \in \cloksx}$; as seen in Lemma~\ref{intrep}, there exists a representative $[A,2B,C]$ of $q$ such that ${A,B,C \in \oks[X]}$ and $A$ is coprime to $f$ in $\K[X]$. Recall that $[A,2B,C]$ is a primitive quadratic form of discriminant ${4B^2-4AC = 4f}$.

Moreover, we assume that $q$ is not a square in $\clkx$. By Corollary~\ref{rootnotsquareint}, for all ${\varepsilon \in \faktor{\oks^{\times}}{\oks^{\times\square}}}$, there exists a root $\rho_{\varepsilon}$ of $f$ such that $\varepsilon A(\rho_{\varepsilon})$ is not a square in $\rints{\K(\rho_{\varepsilon})}{\mathcal{S}_{\varepsilon}}$, where $\mathcal{S}_{\varepsilon}$ is the set of prime ideals of $\rint{\K(\rho_{\varepsilon})}$ lying over those of $\mathcal{S}$.

Our set of interest is
\begin{equation} \label{eqttriv}
	\mathcal{T}_{triv} := \set{n \in \oks \st [A(n),2B(n),C(n)] \text{~is~} \gltw \!{} \mhyphen \text{equivalent~over~} \oks \text{~to~} [1,0,-f(n)]}
\end{equation}
and our goal is to show that $\mathcal{T}_{triv}$ has density $0$.

\begin{defi} \label{defiepsnice}
	Let ${\varepsilon \in \faktor{\oks^{\times}}{\oks^{\times\square}}}$. We define the set ${\mathcal{P}_{\varepsilon} \subseteq \spec(\rints{\K(\rho_{\varepsilon})}{\mathcal{S}_{\varepsilon}})}$ by
	$$\mathcal{P}_{\varepsilon} := \set{\mathfrak{p} \in \spec(\rints{\K(\rho_{\varepsilon})}{\mathcal{S}_{\varepsilon}}) \st \begin{array}{c} \text{the~inertia~degree~of~} \mathfrak{p} \text{~over~} \K \text{~is~} 1 \\ \text{and~} \mathfrak{p} \text{~is~inert~in~the~extension~} \K(\rho_{\varepsilon}) \hookrightarrow \K\left(\rho_{\varepsilon},\sqrt{\varepsilon A(\rho_{\varepsilon})}\right) \end{array}}.$$
\end{defi}

The contrapositive of the PGS Theorem~\ref{theocritmod} directly implies the following inclusion.

\begin{prop} \label{propt}
	With the above notations, we have
	$${\mathcal{T}_{triv} ~ \subseteq ~ \set{n \in \oks \st f(n) = 0} ~~~ \bigcup ~~~ {\displaystyle \bigcup_{\varepsilon \in \faktor{\oks^{\times}}{\oks^{\times\square}}} ~ \bigcap_{\mathfrak{p} \in \mathcal{P}_{\varepsilon}} \set{n \in \oks \st n \not\equiv \rho_{\varepsilon} \pmod{\mathfrak{p}}}}}.$$
\end{prop}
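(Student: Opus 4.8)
The plan is to read the inclusion straight off the contrapositive of the PGS Theorem~\ref{theocritmod}; the only genuine work is the careful negation of its nested quantifiers, together with matching the index sets.

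First I would fix ${n \in \mathcal{T}_{triv}}$ and distinguish two cases according to whether $f(n)$ vanishes. If ${f(n) = 0}$, then $n$ lies in the first set on the right-hand side and there is nothing to prove. So assume ${f(n) \neq 0}$. By the definition of $\mathcal{T}_{triv}$ in \eqref{eqttriv}, the specialized form ${[A(n),2B(n),C(n)]}$ is then $\gltw$-equivalent over $\oks$ to ${[1,0,-f(n)]}$, which is a representative of the principal form class for the discriminant $4f(n)$ (Definition~\ref{defiprincipalform} with ${\pi = 0}$). Since the principal form class is the neutral element, it lies in the kernel of the Genus map, i.e. in the principal genus over $\oks$; hence so does ${[A(n),2B(n),C(n)]}$.

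The key step is to contrapose Theorem~\ref{theocritmod}. That theorem asserts that if, for every ${\varepsilon \in \faktor{\oks^{\times}}{\oks^{\times\square}}}$, one can find a prime $\mathfrak{p}_{\varepsilon}$ of $\K(\rho_{\varepsilon})$ not above $\mathcal{S}$, inert in ${\K(\rho_{\varepsilon}) \hookrightarrow \K\left(\rho_{\varepsilon},\sqrt{\varepsilon A(\rho_{\varepsilon})}\right)}$, and with ${n \equiv \rho_{\varepsilon} \pmod{\mathfrak{p}_{\varepsilon}}}$, then ${[A(n),2B(n),C(n)]}$ is not in the principal genus over $\oks$. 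As we have just shown that it \emph{is} in the principal genus, the hypothesis of the theorem must fail for this $n$. Negating the quantifiers, there exists some ${\varepsilon \in \faktor{\oks^{\times}}{\oks^{\times\square}}}$ such that, for every prime $\mathfrak{p}$ of $\K(\rho_{\varepsilon})$ not above $\mathcal{S}$ and inert in that quadratic extension, we have ${n \not\equiv \rho_{\varepsilon} \pmod{\mathfrak{p}}}$.

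Finally I would match this with the target set. The primes collected in $\mathcal{P}_{\varepsilon}$ (Definition~\ref{defiepsnice}) are exactly the primes of $\rints{\K(\rho_{\varepsilon})}{\mathcal{S}_{\varepsilon}}$ --- hence not above $\mathcal{S}$ --- that are inert in ${\K(\rho_{\varepsilon}) \hookrightarrow \K\left(\rho_{\varepsilon},\sqrt{\varepsilon A(\rho_{\varepsilon})}\right)}$, subject to the \emph{additional} requirement that their inertia degree over $\K$ equals $1$. Thus $\mathcal{P}_{\varepsilon}$ is a subset of the family of primes over which the negated statement quantifies, so the condition ${n \not\equiv \rho_{\varepsilon} \pmod{\mathfrak{p}}}$ holds a fortiori for all ${\mathfrak{p} \in \mathcal{P}_{\varepsilon}}$. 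This places $n$ in ${\bigcap_{\mathfrak{p} \in \mathcal{P}_{\varepsilon}} \set{n \in \oks \st n \not\equiv \rho_{\varepsilon} \pmod{\mathfrak{p}}}}$, and therefore in the union over $\varepsilon$, yielding the claimed inclusion. The one point deserving attention is precisely this matching of index sets: because $\mathcal{P}_{\varepsilon}$ imposes the extra inertia-degree-$1$ condition absent from the theorem, one must observe that shrinking the index set only weakens the intersection, so the inclusion is preserved. There is no deeper obstacle here --- the proposition is essentially a bookkeeping restatement of Theorem~\ref{theocritmod}.
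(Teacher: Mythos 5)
Your proposal is correct and follows exactly the paper's approach: the paper's entire justification is that the inclusion follows from the contrapositive of the PGS Theorem~\ref{theocritmod}, and your argument is a careful spelling-out of that contrapositive, including the necessary case split on ${f(n)=0}$ and the observation that shrinking the quantified family of primes to $\mathcal{P}_{\varepsilon}$ only weakens the intersection. No gaps.
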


\begin{defi} \label{defiuux}
	Let ${\varepsilon \in \faktor{\oks^{\times}}{\oks^{\times\square}}}$ and let $\rho_{\varepsilon}$ be the associated root of $f$. We set
	$$\mathcal{T}^{(\varepsilon)} := \bigcap_{\mathfrak{p} \in \mathcal{P}_{\varepsilon}} \set{n \in \oks \st n \not\equiv \rho_{\varepsilon} \pmod{\mathfrak{p}}} ~~~~~~\text{and}~~~~~~ \mathcal{T} := \bigcup_{\varepsilon \in \faktor{\oks^{\times}}{\oks^{\times\square}}} \mathcal{T}^{(\varepsilon)}.$$
	For all ${x \in \R_+^{\times}}$, we also set
	$$\mathcal{T}^{(\varepsilon)}_x := \bigcap_{\mathfrak{p} \in \mathcal{P}_{\varepsilon}, N(\mathfrak{p}) \leq x} \set{n \in \oks \st n \not\equiv \rho_{\varepsilon} \pmod{{\mathfrak{p}}}},$$
	where $N(\mathfrak{p})$ is the norm of the ideal $\mathfrak{p}$.
\end{defi}

Clearly, for all ${x \in \R_+^{\times}}$, we have ${\mathcal{T}^{(\varepsilon)} \subseteq \mathcal{T}^{(\varepsilon)}_x}$.

\begin{theo} \label{theodeltaleq}
	Let ${f \in \ok[X]}$ be a square-free monic polynomial of odd degree at least $3$, let $\mathcal{S}$ be a finite set of nonzero prime ideals of $\ok$ such that $\oks$ is a PID, let $\delta$ be a density, and let ${q = [A,2B,C] \in \cloksx}$ with $A$ coprime to $f$ in $\K[X]$ (see Lemma~\ref{intrep}). Assume that $q$ is not in the principal genus over $\K[X]$.
	
	Then the density of the set $\mathcal{T}_{triv}$ of ${n \in \oks}$ such that the specialized class of quadratic forms ${[A(n),2B(n),C(n)] \in \cloks}$ is trivial satisfies
	$$\delta(\mathcal{T}_{triv}) ~ \leq ~ \sum_{\varepsilon \in \faktor{\oks^{\times}}{\oks^{\times\square}}} ~ \prod_{\mathfrak{p} \in \mathcal{P}_{\varepsilon}} \left(1-\frac{1}{N(\mathfrak{p})}\right).$$
\end{theo}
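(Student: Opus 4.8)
The plan is to deduce the bound from the set-theoretic inclusion of Proposition~\ref{propt} together with the formal properties \ref{d1oks}--\ref{d5ideal} of a density, reducing the estimate to a Chinese-Remainder count. By Proposition~\ref{propt}, $\mathcal{T}_{triv}$ is contained in the union of the finite set $\{n \in \oks : f(n) = 0\}$ and of $\mathcal{T} = \bigcup_{\varepsilon} \mathcal{T}^{(\varepsilon)}$ (notation of Definition~\ref{defiuux}). A finite set has density $0$: each singleton $\{a\}$ lies in the coset $a + \mathfrak{q}$ for every prime $\mathfrak{q}$, so \ref{d2incl}, \ref{d4transinv} and \ref{d5ideal} give $\delta(\{a\}) \le 1/N(\mathfrak{q})$ for arbitrarily large $N(\mathfrak{q})$, whence $\delta(\{a\}) = 0$; then \ref{d3union} handles the finite union. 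Since $\faktor{\oks^{\times}}{\oks^{\times \square}}$ is finite, a second application of \ref{d3union} reduces the theorem to proving, for each fixed $\varepsilon$, the inequality $\delta(\mathcal{T}^{(\varepsilon)}) \le \prod_{\mathfrak{p} \in \mathcal{P}_{\varepsilon}}(1 - 1/N(\mathfrak{p}))$.

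Fix such an $\varepsilon$. The first step is to turn the defining congruences of $\mathcal{T}^{(\varepsilon)}$, which a priori live in $\rints{\K(\rho_\varepsilon)}{\mathcal{S}_{\varepsilon}}$, into honest congruences in $\oks$. Every $\mathfrak{p} \in \mathcal{P}_{\varepsilon}$ has inertia degree $1$ over $\K$, so its contraction $\mathfrak{q} := \mathfrak{p} \cap \oks$ is a prime of $\oks$ with $\oks/\mathfrak{q} \xrightarrow{\sim} \rints{\K(\rho_\varepsilon)}{\mathcal{S}_{\varepsilon}}/\mathfrak{p}$, in particular $N(\mathfrak{q}) = N(\mathfrak{p})$; the image of $\rho_\varepsilon$ then lifts to a unique class $c_{\mathfrak{p}} \pmod{\mathfrak{q}}$, and for $n \in \oks$ the condition $n \not\equiv \rho_\varepsilon \pmod{\mathfrak{p}}$ becomes $n \not\equiv c_{\mathfrak{p}} \pmod{\mathfrak{q}}$. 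Thus $\mathcal{T}^{(\varepsilon)}$ is exactly the set of $n$ avoiding, modulo each such prime $\mathfrak{q}$ of $\oks$, the residues coming from the primes $\mathfrak{p}$ above it.

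Next I would truncate and count. Because $\mathcal{T}^{(\varepsilon)} \subseteq \mathcal{T}^{(\varepsilon)}_x$ for every $x$ (Definition~\ref{defiuux}), \ref{d2incl} lets me bound $\delta(\mathcal{T}^{(\varepsilon)})$ by $\delta(\mathcal{T}^{(\varepsilon)}_x)$, where only finitely many primes intervene. Grouping the relevant $\mathfrak{p}$ by their contraction $\mathfrak{q}$ and writing $\mathfrak{I}$ for the product of these distinct $\mathfrak{q}$, the Chinese Remainder Theorem identifies $\mathcal{T}^{(\varepsilon)}_x$ with a union of $\prod_{\mathfrak{q}}(N(\mathfrak{q}) - m_{\mathfrak{q}})$ residue classes modulo $\mathfrak{I}$, where $m_{\mathfrak{q}}$ is the number of distinct forbidden residues modulo $\mathfrak{q}$. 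Each such class is a translate of $\mathfrak{I}$, of density $1/N(\mathfrak{I})$ by \ref{d4transinv} and \ref{d5ideal}; since densities are only subadditive, \ref{d3union} then yields $\delta(\mathcal{T}^{(\varepsilon)}_x) \le \prod_{\mathfrak{q}}(1 - m_{\mathfrak{q}}/N(\mathfrak{q}))$. Finally, away from the finitely many primes dividing $\disc(f)$ the polynomial $f$ is separable modulo $\mathfrak{q}$, so distinct primes $\mathfrak{p}$ over $\mathfrak{q}$ reduce $\rho_\varepsilon$ to distinct roots and $m_{\mathfrak{q}}$ equals the number $k_{\mathfrak{q}}$ of such $\mathfrak{p}$; Bernoulli's inequality $1 - k_{\mathfrak{q}}/N(\mathfrak{q}) \le (1 - 1/N(\mathfrak{q}))^{k_{\mathfrak{q}}}$ converts the bound into $\delta(\mathcal{T}^{(\varepsilon)}_x) \le \prod_{\mathfrak{p},\, N(\mathfrak{p}) \le x}(1 - 1/N(\mathfrak{p}))$. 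Letting $x \to \infty$, the partial products are non-increasing and tend to $\prod_{\mathfrak{p} \in \mathcal{P}_{\varepsilon}}(1 - 1/N(\mathfrak{p}))$, and summing over $\varepsilon$ via \ref{d3union} gives the statement.

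I expect the main obstacle to be precisely the count in the truncated step, for two reasons. First, the density axioms provide only subadditivity \ref{d3union}, not additivity, so the Chinese Remainder description must be exploited purely as an upper bound by a sum of densities of individual cosets rather than as an exact value. Second, one must control primes $\mathfrak{q}$ carrying several $\mathfrak{p} \in \mathcal{P}_{\varepsilon}$ above them: here separability of $f$ modulo $\mathfrak{q}$ (that is, $\mathfrak{q} \nmid \disc(f)$) guarantees distinct forbidden residues and makes Bernoulli's inequality yield the full product over $\mathfrak{p}$. The finitely many primes dividing $\disc(f)$ require separate bookkeeping, but are ultimately harmless: the family $\mathcal{P}_{\varepsilon}$ has positive density among primes, so the infinite product vanishes and the asserted inequality really expresses that $\mathcal{T}_{triv}$ has density $0$.
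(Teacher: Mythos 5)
Your proof is correct and follows the same skeleton as the paper's (Proposition~\ref{propt}, the axioms of Definition~\ref{defidensity}, truncation to $\mathcal{T}^{(\varepsilon)}_x$, a Chinese Remainder count, then letting $x \to \infty$), but the central counting step is carried out differently, and your version is the more careful one. The paper counts the solutions of the non-congruences inside $\rints{\K(\rho_{\varepsilon})}{\mathcal{S}_{\varepsilon}}$ modulo $\prod\mathfrak{p}$, obtaining $\prod_{\mathfrak{p}}(N(\mathfrak{p})-1)$ classes, and then ``lifts these solutions to $\oks$''. This implicitly assumes that distinct $\mathfrak{p} \in \mathcal{P}_{\varepsilon}$ contract to distinct primes of $\oks$: if $\mathfrak{p}_1 \neq \mathfrak{p}_2$ lie above the same $\mathfrak{q} = \mathfrak{p}_i \cap \oks$, then the image of $\oks$ in $\rints{\K(\rho_{\varepsilon})}{\mathcal{S}_{\varepsilon}}/\mathfrak{p}_1\mathfrak{p}_2$ is only the diagonal copy of $\oks/\mathfrak{q}$, most solution classes contain no element of $\oks$ at all, and the correct modulus in $\oks$ is $\mathfrak{q}$, of norm $N(\mathfrak{p}_1)$ rather than $N(\mathfrak{p}_1)N(\mathfrak{p}_2)$. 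Your contraction-and-grouping argument --- the bound $\prod_{\mathfrak{q}}(1-m_{\mathfrak{q}}/N(\mathfrak{q}))$, then separability of $f$ modulo $\mathfrak{q}$ plus Bernoulli --- is exactly what is needed to make this step airtight. Moreover, your insistence on separate bookkeeping for the primes dividing $\disc(f)$ is not pedantry: when two primes above the same $\mathfrak{q}$ force the same forbidden residue (possible only for $\mathfrak{q} \mid \disc(f)$), one has $m_{\mathfrak{q}} < k_{\mathfrak{q}}$, and the truncated inequality $\delta(\mathcal{T}^{(\varepsilon)}_x) \leq \prod_{N(\mathfrak{p}) \leq x}\left(1-1/N(\mathfrak{p})\right)$ can genuinely fail, since $\mathcal{T}^{(\varepsilon)}_x$ is then (as far as $\mathfrak{q}$ is concerned) the complement of a single residue class, of density $1-1/N(\mathfrak{q}) > (1-1/N(\mathfrak{q}))^2$. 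Your explicit check that finite sets have density $0$ also fills a detail the paper passes over silently.

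One step should be promoted from closing remark to actual proof content. Discarding the primes above divisors of $\disc(f)$ only yields $\delta(\mathcal{T}^{(\varepsilon)}) \leq \prod_{\mathfrak{p} \in \mathcal{P}_{\varepsilon} \setminus B}\left(1 - 1/N(\mathfrak{p})\right)$ with $B$ finite, and since every factor is $<1$ this is \emph{a priori weaker} than the stated bound over all of $\mathcal{P}_{\varepsilon}$. The inequality of the theorem is then recovered exactly as you indicate: $\mathcal{P}_{\varepsilon} \setminus B$ still has Dirichlet density $\tfrac{1}{2}$, so $\sum_{\mathfrak{p} \in \mathcal{P}_{\varepsilon}\setminus B} 1/N(\mathfrak{p})$ diverges and both infinite products vanish (this is the computation carried out in the proof of Theorem~\ref{theodens1}). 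That vanishing argument is therefore a necessary ingredient of your proof of this theorem, not an afterthought. With it written out, your argument is complete --- and it repairs a genuine oversight in the paper's own proof of this statement.
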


\begin{proof}
	Recall that ${\mathcal{T}_{triv} \subseteq \mathcal{T} \cup \set{n \in \oks \st f(n) = 0}}$ by Proposition \ref{propt}, where $\mathcal{T}$ comes from Definition~\ref{defiuux}. Hence, ${\delta(\mathcal{T}_{triv}) \leq \delta(\mathcal{T})}$.
	
	Moreover,
	$$\mathcal{T} = \bigcup_{\varepsilon \in \faktor{\oks^{\times}}{\oks^{\times\square}}} \mathcal{T}^{(\varepsilon)} \subseteq \bigcup_{\varepsilon \in \faktor{\oks^{\times}}{\oks^{\times\square}}} \mathcal{T}^{(\varepsilon)}_x$$ for all ${x \in \R^{\times}_+}$, hence ${\displaystyle \delta(\mathcal{T}_{triv}) \leq \sum_{\varepsilon \in \faktor{\oks^{\times}}{\oks^{\times\square}}} \delta\left(\mathcal{T}^{(\varepsilon)}_x\right)}$ by properties \ref{d2incl} and \ref{d3union}.
	
	By Definition~\ref{defiepsnice}, if ${\mathfrak{p} \in \mathcal{P}_{\varepsilon}}$, then it has inertia degree $1$ above $\K$, hence the congruence ${n \equiv \rho_{\varepsilon} \pmod{\mathfrak{p}}}$ has a solution ${n \in \oks}$. Therefore, the congruence relation ${n \not\equiv \rho_{\varepsilon} \pmod{\mathfrak{p}}}$ has ${N(\mathfrak{p})-1}$ solutions modulo $\mathfrak{p}$. Using the Chinese Remainder Theorem, since all non-zero prime ideals of the Dedekind domain $\rints{\K(\rho_{\varepsilon})}{\mathcal{S}_{\varepsilon}}$ are maximal, we know that the system of congruences ${(n \not\equiv \rho_{\varepsilon} \pmod{\mathfrak{p}})_{\mathfrak{p} \in \mathcal{P}_{\varepsilon}, N(\mathfrak{p}) \leq x}}$ has ${\displaystyle \prod_{\mathfrak{p} \in \mathcal{P}_{\varepsilon}, N(\mathfrak{p}) \leq x} (N(\mathfrak{p})-1)}$ solutions modulo ${\displaystyle \prod_{\mathfrak{p} \in \mathcal{P}_{\varepsilon}, N(\mathfrak{p}) \leq x} \mathfrak{p}}$. Denote by $\mathcal{R}^{(\varepsilon)}_x$ a set of lifts of those solutions to $\oks$. Then, we can write
	
	\begin{align*}
	\mathcal{T}^{(\varepsilon)}_x & = \bigsqcup_{\alpha \in \mathcal{R}^{(\varepsilon)}_x} \set{n \in \oks \st n \equiv \alpha \pmod{\prod_{\mathfrak{p} \in \mathcal{P}_{\varepsilon}, N(\mathfrak{p}) \leq x} \mathfrak{p}}} \\
	& = \bigsqcup_{\alpha \in \mathcal{R}^{(\varepsilon)}_x} \left(\alpha + \prod_{\mathfrak{p} \in \mathcal{P}_{\varepsilon}, N(\mathfrak{p}) \leq x} \mathfrak{p}\right).
	\end{align*}
	
	Since ${\#\mathcal{R}^{(\varepsilon)}_x = {\displaystyle \prod_{\mathfrak{p} \in \mathcal{P}_{\varepsilon}, N(\mathfrak{p}) \leq x} (N(\mathfrak{p})-1)}}$, we have
	$$\delta\left(\mathcal{T}^{(\varepsilon)}_x\right) \leq \prod_{\mathfrak{p} \in \mathcal{P}_{\varepsilon}, N(\mathfrak{p}) \leq x} \left(1-\frac{1}{N(\mathfrak{p})}\right)$$
	by properties \ref{d4transinv} and \ref{d5ideal}. This being true for all ${x \in \R^{\times}_+}$, we infer that
	$$\delta(\mathcal{T}_{triv}) ~ \leq ~ \sum_{\varepsilon \in \faktor{\oks^{\times}}{\oks^{\times\square}}} ~ \lim_{x \to +\infty} \delta\left(\mathcal{T}^{(\varepsilon)}_x\right) ~ \leq ~ \sum_{\varepsilon \in \faktor{\oks^{\times}}{\oks^{\times\square}}} ~ \prod_{\mathfrak{p} \in \mathcal{P}_{\varepsilon}} \left(1-\frac{1}{N(\mathfrak{p})}\right).$$
\end{proof}

The only remaining step is to show that ${\displaystyle \prod_{\mathfrak{p} \in \mathcal{P}_{\varepsilon}} \left(1-\frac{1}{N(\mathfrak{p})}\right) = 0}$ for all ${\varepsilon \in \faktor{\oks^{\times}}{\oks^{\times\square}}}$. Actually, this will be a consequence of the fact that the set $\mathcal{P}_{\varepsilon}$ has strictly positive Dirichlet density.

\begin{theo} \label{theodens1}
	Let ${f \in \ok[X]}$ be a square-free monic polynomial of odd degree at least $3$, let $\mathcal{S}$ be a finite set of nonzero prime ideals of $\ok$ such that $\oks$ is a PID, let $\delta$ be a density, and let ${q = [A,2B,C] \in \cloksx}$.
	
	If $q$ is not in the principal genus over $\K[X]$, then the set
	$$\mathcal{T}_{triv} = \set{n \in \oks \st [A(n),2B(n),C(n)] \text{~is~} \gltw \!{} \mhyphen \text{equivalent~over~} \oks \text{~to~} [1,0,-f(n)]}$$
	has density ${\delta(\mathcal{T}_{triv}) = 0}$ in $\oks$.
\end{theo}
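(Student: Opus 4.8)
The plan is to combine the upper bound already furnished by Theorem~\ref{theodeltaleq} with an analytic estimate showing that every Euler-type product appearing there vanishes. First I would fix a representative $[A,2B,C]$ of $q$ with $A$ coprime to $f$ in $\K[X]$ (possible by Lemma~\ref{intrep}), so that Theorem~\ref{theodeltaleq} applies and yields
$$\delta(\mathcal{T}_{triv}) \leq \sum_{\varepsilon \in \faktor{\oks^{\times}}{\oks^{\times\square}}} \prod_{\mathfrak{p} \in \mathcal{P}_{\varepsilon}}\left(1 - \frac{1}{N(\mathfrak{p})}\right).$$
Since $\faktor{\oks^{\times}}{\oks^{\times\square}}$ is finite, the outer sum is finite, so it suffices to prove that each inner product is $0$.

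Next I would reduce the vanishing of the product to the divergence of a series. Using the elementary fact that for any set $\mathcal{P}_{\varepsilon}$ of nonzero prime ideals,
$$\prod_{\mathfrak{p} \in \mathcal{P}_{\varepsilon}}\left(1 - \frac{1}{N(\mathfrak{p})}\right) = 0 \quad\Longleftrightarrow\quad \sum_{\mathfrak{p} \in \mathcal{P}_{\varepsilon}} \frac{1}{N(\mathfrak{p})} = +\infty,$$
which follows from $\log(1-t) \sim -t$ as $t \to 0^{+}$, the task becomes to show that $\sum_{\mathfrak{p} \in \mathcal{P}_{\varepsilon}} 1/N(\mathfrak{p})$ diverges. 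I would obtain this from the stronger statement that $\mathcal{P}_{\varepsilon}$ has strictly positive Dirichlet density. Because $q$ is not in the principal genus over $\K[X]$, it is not a square in $\clkx$ (the two notions coincide by the injectivity of $\Psi$ in Theorem~\ref{inj}); hence by Corollary~\ref{rootnotsquareint} the element $\varepsilon A(\rho_{\varepsilon})$ is genuinely a non-square in $\K(\rho_{\varepsilon})$, so $\K(\rho_{\varepsilon}) \hookrightarrow \K\left(\rho_{\varepsilon},\sqrt{\varepsilon A(\rho_{\varepsilon})}\right)$ is a true quadratic extension. Chebotarev's Density Theorem, applied to this quadratic (hence Galois) extension, shows that the primes of $\rint{\K(\rho_{\varepsilon})}$ inert in it form a set of Dirichlet density $\frac{1}{2}$; imposing additionally inertia degree $1$ over $\K$ discards only a set of density $0$, since the degree-$1$ primes have density $1$ (as in the proof of Corollary~\ref{infty}). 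Therefore $\mathcal{P}_{\varepsilon}$ has Dirichlet density $\frac{1}{2} > 0$.

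To close the loop I would invoke the standard principle that a set of primes of positive Dirichlet density necessarily satisfies $\sum 1/N(\mathfrak{p}) = +\infty$: were the series convergent, then $\sum_{\mathfrak{p} \in \mathcal{P}_{\varepsilon}} N(\mathfrak{p})^{-s}$ would remain bounded as $s \to 1^{+}$, while the full prime sum diverges like $-\log(s-1)$ because the Dedekind zeta function of $\K(\rho_{\varepsilon})$ has a simple pole at $s = 1$; the density ratio would then tend to $0$, contradicting positivity. Hence each product vanishes, the finite sum over $\varepsilon$ is $0$, and $\delta(\mathcal{T}_{triv}) = 0$.

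This argument assembles standard ingredients, so there is no single deep obstacle; the step requiring the most care is the interface between the combinatorial density $\delta$ on $\oks$ and the analytic Dirichlet density on primes of $\rint{\K(\rho_{\varepsilon})}$. One must be careful that the product bound of Theorem~\ref{theodeltaleq}, which is controlled by $\delta$ through properties~\ref{d4transinv} and \ref{d5ideal}, is correctly matched with the Dirichlet-density statement about $\mathcal{P}_{\varepsilon}$, and that the equivalence between the vanishing product and the divergent series is applied to the correct index set for each $\varepsilon$.
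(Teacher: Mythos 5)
Your proposal is correct and follows essentially the same route as the paper: both bound $\delta(\mathcal{T}_{triv})$ via Theorem~\ref{theodeltaleq}, establish that each $\mathcal{P}_{\varepsilon}$ has Dirichlet density $\tfrac{1}{2}$ (Chebotarev for the inert condition intersected with the density-$1$ set of degree-$1$ primes), and conclude that the Euler products vanish. The only difference is cosmetic: the paper deduces the vanishing by working with the $s$-parameterized products and letting $s \to 1^{+}$ (via a lemma of Janusz), whereas you work directly at $s=1$ using the standard equivalence between $\prod\left(1-\frac{1}{N(\mathfrak{p})}\right)=0$ and $\sum \frac{1}{N(\mathfrak{p})}=+\infty$ — both are valid.
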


\begin{proof}
	 In view of Theorem~\ref{theodeltaleq}, it remains to compute the product ${\displaystyle \prod_{\mathfrak{p} \in \mathcal{P}_{\varepsilon}} \left(1-\frac{1}{N(\mathfrak{p})}\right)}$ for all ${\varepsilon \in \faktor{\oks^{\times}}{\oks^{\times\square}}}$. Let us fix such an $\varepsilon$. As already noticed in the proof of Corollary~\ref{infty}, the set $\mathcal{P}_{\varepsilon}$ from Definition~\ref{defiepsnice} has Dirichlet density $\frac{1}{2}$, being the intersection of a set of Dirichlet density $1$ (the prime ideals having inertia degree $1$ over $\K$) and another one of density $\frac{1}{2}$ (the prime ideals inert in some quadratic extension). This implies that ${{\displaystyle \sum_{\mathfrak{p} \in \mathcal{P}_{\varepsilon}} \frac{1}{N(\mathfrak{p})^s}} \underset{s \to 1^+}{\longrightarrow} +\infty}$, since otherwise, the Dirichlet density of $\mathcal{P}_{\varepsilon}$ would vanish by definition.
	
	Beside this, one can order prime ideals of $\rints{\K(\rho_{\varepsilon})}{\mathcal{S}_{\varepsilon}}$ such that the sequence $(N(\mathfrak{p}_j))_{j}$ is increasing. Applying \cite[Lemma~IV.4.4]{Janusz} with ${u_j := N(\mathfrak{p}_j)}$ leads to
	$$\sum_{\mathfrak{p} \in \mathcal{P}_{\varepsilon}} \frac{1}{N(\mathfrak{p})^s} = -\sum_{\mathfrak{p} \in \mathcal{P}_{\varepsilon}} \ln\left(1 - \frac{1}{N(\mathfrak{p})^s}\right) + \underset{s \to 1^+}{O}(1)$$
	for all ${s > 1}$. Taking the exponential, we get
	\begin{align*}
	\prod_{\mathfrak{p} \in \mathcal{P}_{\varepsilon}} \left(1-\frac{1}{N(\mathfrak{p})^s}\right) & = \exp\left(\sum_{\mathfrak{p} \in \mathcal{P}_{\varepsilon}} \ln\left(1 - \frac{1}{N(\mathfrak{p})^s}\right)\right) \\
	& = \exp\left(-\sum_{\mathfrak{p} \in \mathcal{P}_{\varepsilon}} \frac{1}{N(\mathfrak{p})^s}\right)\exp\left(\underset{s \to 1^+}{O}(1)\right) \underset{s \to 1^+}{\longrightarrow} 0.
	\end{align*}
	The conclusion is now straightforward, applying Theorem~\ref{theodeltaleq}.
\end{proof}

\end{sloppypar}

 \providecommand{\andname}{\&} \providecommand{\bibsep}{0cm}
  \providecommand{\toappear}{to appear} \providecommand{\noopsort}[1]{}

\textsc{William Dallaporta}, Institut de Mathématiques de Toulouse, Université de Toulouse, CNRS UMR 5219, 118 route de Narbonne, 31062 Toulouse Cedex 9, France.

E-mail address: \url{william.dallaporta@laposte.net}

\end{document}